\newcommand{\raStar}{\rightarrow^\star}
\newcommand{\ldegree}{{\rm ldegree}}
\newcommand{\rt}{{\sf rt}}
\newcommand{\Set}{{\sf Set}}
\newcommand{\MSet}{{\sf MSet}}
\newcommand{\Seq}{{\sf Seq}}
\newcommand{\DCycle}{{\sf DCycle}}
\newcommand{\Cycle}{{\sf Cycle}}
\newcommand{\Su}{{\rm Su}}
\newcommand{\nequiv}{\not{\hspace*{-0.8ex}\equiv}}
\newcommand{\FORESTS}{{\rm FORESTS}}
\newcommand{\TREES}{{\rm TREES}}
\newcommand{\MSO}{{\rm MSO}}
\newcommand{\dom}{{\sf Dom}}
\newcommand{\Sdom}{\Gamma{\sf Dom}}
\newcommand{\Spec}{{\sf Spec}}
\newcommand{\Primes}{{\sf Primes}}
\newcommand{\EvenSquares}{{\sf PosEven}}
\newcommand{\bgam}{{\pmb{\Gamma}}}
\newcommand{\gam}{{\Gamma}}
\newcommand{\bA}{{\bf A}}
\newcommand{\bB}{{\bf B}}
\newcommand{\bF}{{\bf F}}
\newcommand{\bG}{{\bf G}}
\newcommand{\bH}{{\bf H}}
\newcommand{\bL}{{\bf L}}
\newcommand{\bR}{{\bf R}}
\newcommand{\bS}{{\bf S}}
\newcommand{\bT}{{\bf T}}
\newcommand{\bU}{{\bf U}}
\newcommand{\bY}{{\bf Y}}
\newcommand{\bb}{{\bf b}}
\newcommand{\bu}{{\bf u}}
\newcommand{\bv}{{\bf v}}
\newcommand{\by}{{\bf y}}
\newcommand{\bz}{{\bf z}}
\newcommand{\bzero}{{\mathbf 0}}
\newcommand{\bempty}{{\pmb{\O}}}
\newcommand{\bGamma}{{\mathbf{\Gamma}}}
\newcommand{\bDelta}{{\mathbf{\Delta}}}
\newcommand{\bOmega}{{\mathbf{\Omega}}}
\newcommand{\bTheta}{{\mathbf{\Theta}}}
\newcommand{\bbC}{{\mathbb C}}
\newcommand{\bbF}{{\mathbb F}}
\newcommand{\bbN}{{\mathbb N}}
\newcommand{\bbP}{{\mathbb P}}
\newcommand{\bbR}{{\mathbb R}}
\newcommand{\bbS}{{\mathbb S}}
\newcommand{\bbZ}{{\mathbb Z}}
\newcommand{\fQ}{{\mathfrak Q}}
\newcommand{\fc}{{\mathfrak c}}
\newcommand{\fm}{{\mathfrak m}}
\newcommand{\fn}{{\mathfrak n}}
\newcommand{\fp}{{\mathfrak p}}
\newcommand{\fq}{{\mathfrak q}}
\newcommand{\bfm}{{\pmb{\mathfrak m}}}
\newcommand{\bfq}{{\pmb{\mathfrak q}}}
\newcommand{\cA}{{\mathcal A}}
\newcommand{\cB}{{\mathcal B}}
\newcommand{\cC}{{\mathcal C}}
\newcommand{\cF}{{\mathcal F}}
\newcommand{\cL}{{\mathcal L}}
\newcommand{\cN}{{\mathcal N}}
\newcommand{\cO}{{\mathcal O}}
\newcommand{\cR}{{\mathcal R}}
\newcommand{\cT}{{\mathcal T}}
\newcommand{\cU}{{\mathcal U}}
\newcommand{\bcA}{{\pmb{\mathcal A}}}
\newcommand{\bcB}{{\pmb{\mathcal B}}}
\newtheorem{theorem}{Theorem}
\newtheorem{proposition}[theorem]{Proposition}
\newtheorem{lemma}[theorem]{Lemma}
\newtheorem{corollary}[theorem]{Corollary}
\newtheorem{definition}[theorem]{Definition}
\newtheorem{example}[theorem]{Example}
\newtheorem{question}{Question}
\newtheorem{remark}[theorem]{Remark}
 \newcounter{thlistctr}
 \newenvironment{thlist}{\
 \begin{list}%
 {\alph{thlistctr}}%
 {\setlength{\labelwidth}{2ex}%
 \setlength{\labelsep}{1ex}%
 \setlength{\leftmargin}{6ex}%
 \usecounter{thlistctr}}}%
 {\end{list}}
\def\now%
\def\timestamp%
\def\zeroPadTwo#1%
\title{Spectra and Systems of Equations}
\author{Jason Bell, Stanley Burris, Karen Yeats}
\date{\today}                           
\begin{document}
\maketitle

 \begin{abstract}
 In a previous work we introduced an elementary method to analyze the periodicity 
 of a generating function defined by a single equation $y=G(x,y)$. This was based on 
 deriving a single set-equation $Y = \gam(Y)$ defining the spectrum of the generating
 function. This paper focuses on extending the analysis of 
 periodicity to generating functions defined by a system of equations $\by = \bG(x,\by)$. 

The final section looks at periodicity results for the spectra of monadic second-order 
classes whose spectrum is determined by an equational specification---an observation of
Compton shows that monadic-second order classes of trees have this property.
This section concludes with a substantial simplification of the proofs in the 2003 
foundational paper on spectra by Gurevich and Shelah \cite{GuSh2003}, namely
new proofs are given of:
 (1) every monadic second-order class of  $m$-colored functional digraphs 
 is eventually periodic, and (2) the monadic second-order theory of finite trees
 is decidable.
\end{abstract}

\section{Introduction}
   
Following Flajolet and  Sedgewick \cite{FlSe2009},  
a {\em combinatorial class} $\cA$ is a class of  objects with a function $||\ ||$ that assigns a
positive integer size to each object in the class, satisfying the condition that there are only
finitely many objects of each size. 
We deviate from the  definition in \cite{FlSe2009} by not having objects of  size  0.
Letting $a(n)$ be the number of objects of size $n$, one has the {\em generating function} 
$\displaystyle A(x) = \sum_{n=1}^\infty a(n) x^n$. 
   
\subsection{Generating Functions defined by Systems of Equations}
   
 Cayley \cite{Cayley1857} noted in his very first paper on trees\footnote
	 {Unless stated otherwise, all trees in this paper are assumed to be {\em rooted}.} 
 in 1857 that one has an equation
\[
   \sum_{n\ge 1} a(n)x^n\   =  \ x \cdot \prod_{n\ge 1} (1-x^n)^{-a(n)}
\]
which yields a recursive procedure to calculate the values of $a(n)$. 
Cayley used this to calculate the first 13 coefficients $a(n)$, that is, the numbers of 
such trees of sizes 1 through 13 (two of the numbers were not calculated correctly).\footnote
    {Actually, as noted in \cite{BBY2006}, Cayley's equation was slightly different since his 
    $a(n)$ counted the number of trees with $n$ edges, which is the number of trees
    with $n+1$ vertices.}
In 1937 P\'olya (see \cite{PoRe1987}) would  rewrite this equation as
\[
   A(x)\   =  \ x \cdot \exp\left(\sum_{m=1}^\infty A(x^m)/m\right),
\]
a form which could be viewed as a functional equation for $A(x)$, 
with important analytic properties based on the fact that  the radius 
of convergence $\rho$ of $A(x)$ is less than 1 (which is easily proved).
This allowed P\'olya to invoke the  implicit function theorem and results of 
Darboux to show that $A(x)$ has a square-root singularity at $\rho$, leading to 
the asymptotic form 
$C\rho^{-n}n^{-3/2}$ 
for the coefficients $a(n)$.\footnote
	{In \cite{BBY2006} we showed that a similar analysis can be carried out for most $A(x)$ 
	defined by a single non-linear equation $y = \Theta(y)$ where $\Theta$ is constructed 
	from the variables $x, y$, operations $+, \cdot ,\circ$, and unary operators that correspond 
	to (restrictions of) the standard constructions of Multiset, Sequence and (directed or 
	undirected) Cycle.} 
Many natural classes of trees are specified recursively by a single equation, for example 
planar binary trees, also known as $(0,2)$-trees, where the generating function
$A(x)$ solves the equation 
$y\   =  \ x\big(1 +  y^2 \big) $.

Although generating functions defined by a single equation cover many interesting 
cases, Example \ref{blue and red}, at the end of the introduction section, hints at 
the value of considering generating functions defined by a system of several equations. 

 \subsection{Spectra and Periodicity}
In 1952 the \textit{Journal of Symbolic Logic} initiated a section devoted to unsolved problems
 in the field of symbolic logic. The first problem, posed by Heinrich Scholz 
 \cite{Scholz1952},  was the following. Given a sentence $\varphi$ from first-order logic, 
he defined the {\em spectrum} of $\varphi$ to be the set of sizes of the finite models of $\varphi$. 
For example, binary trees can be defined by such a $\varphi$, and its spectrum is the arithmetical 
 progression $\{1,3,5,\ldots\}$. The algebraic structures called fields can also be defined 
 by such a $\varphi$, with the spectrum
  being the set $\{2,4,\ldots,3,9,\ldots\}$ of powers of prime numbers. The possibilities for the 
  spectrum of a first-order sentence are amazingly complex.\footnote
  {Asser's 1955 conjecture, that\textit{ the complement of a first-order spectrum is always going to
  be a first-order spectrum}, is still open --- it is known, through the work of Jones and Selman and Fagin in the 1970s, 
 that this conjecture is equivalent to the question of whether the complexity class NE of 
  problems decidable by a nondeterministic machine in exponential time is closed under complement.   
  Thus, Asser's conjecture is, in fact, one of the notoriously hard questions of computational 
  complexity theory. Stockmeyer \cite{Stockmeyer1987}, p.~33, states that if Asser's conjecture is false then 
  NP $\neq$ co-NP, and hence P $\neq$ NP.}
  
   Scholz's problem was to find a necessary and sufficient condition for a set $S$ of natural
    numbers to be the spectrum of some first-order sentence $\varphi$. This problem led to a 
    great deal of research by logicians on the topic of spectra --- see for example the recent
    survey paper \cite{DJMM2009} of Durand, Jones, Makowsky, and More.
    Periodicity is one of the properties that has
    been examined in the context of studying spectra. 

\begin{definition}  \label{defn periodic}
 $\bbN$ is the set of non-negative integers, $\bbP$ is the set of positive integers. 

For $A\subseteq \mathbb N$,
\begin{thlist} 

\item
 $A$ is {\em periodic} if there is a positive integer $p$ such that  $ p+A \subseteq A$, 
 that is, $a\in A$ implies $p+a\in A$. Such an integer $p$ is a {\em period} of $A$.
 
 \item
A is {\em eventually periodic} if there is a positive integer 
$p$ such that $ p+A$ is eventually in $A$, that is, there is an $m$ such that for $a\in A$, 
if $a\ge m$ then $p+a\in A$.  Such a $p$ is an {\em eventual period} of $A$.
  
\end{thlist} 
\end{definition} 

Clearly every arithmetical progression and every cofinite subset of $\bbN$ is periodic; and 
every periodic set is eventually periodic. Finite sets are eventually periodic. As will be seen,
periodicity seems to be a natural property for the spectra of combinatorial classes specified 
by a system of equations. The famous Skolem-Mahler-Leech Theorem (see, for example,
\cite{FlSe2009}, p.~266) says that the spectrum of every rational function $P(x)/Q(x)$
 in $\mathbb{Q}(x)$ is eventually periodic. 
 Consequently polynomial systems $\by = \bG(x,\by)$ with rational coefficients that are linear 
 in the variables $y_i$, and with a non-singular Jacobian matrix $\partial(\by - \bG)/\partial \by$,
  have power series solutions $y_i = T_i(x)$ such that the support sets of
  the coefficient sequences $t_i(n)$ of the  $T_i(x)$ are eventually periodic.
However, much simpler methods give this periodicity result for the non-negative
$\by$-linear systems considered in this paper.

If the spectrum $A$ of a combinatorial class $\cA$ is eventually periodic then one has
the possibility,  as in the case of regular languages and well-behaved irreducible systems, that
 the class $\cA$ decomposes into a finite subclass $\cA_0$, along with finitely many subclasses
 $\cA_i$, such that the spectrums $A_i$ are arithmetical progressions $a_i + b_i\cdot \bbN$, and 
 the generating functions $A_i(x)$ have well-behaved coefficients (e.g., monotone increasing)
 on $A_i$.\footnote{
 The comments in this paragraph are related to Question 7.4 in Compton's 1989 paper \cite{Co2} on monadic second order logical limit laws.
 }
 
 In the study of spectra of combinatorial classes,  logicians have dominated the
 literature thanks to powerful tools like Ehrenfeucht-Fra\"{i}ss\'{e} games. In this paper an 
 alternate approach to the spectra of combinatorial classes is developed using \textit{systems 
 of set-equations} derived directly from  specifications, or from systems of equations defining
generating functions. This method was briefly 
 introduced in 2006 in \cite{BBY2006}, to study the spectrum of a combinatorial class 
 defined by a single equation.  
For example, the class of planar binary trees is specified by the equation
$\cT = \{\bullet\}\, \cup\, \{\bullet\}/\Seq_2(\cT)$,
which one can read as: {\em the class of planar binary trees is the smallest class $\cT$ 
which has the one-element tree `$\bullet$' and is closed under taking any sequence of
two trees and adjoining a new root `$\bullet$' }.  From the specification equation the 
generating function $T(x)$ of $\cT$ satisfies $T(x) = x + x\cdot T(x)^2$, a simple 
quadratic equation that can be solved for $T(x)$. One also says that $T(x)$ is a solution
to the polynomial equation $y = x + x\cdot y^2$. For the spectrum $T$ of $\cT$ one
has the equation
$T = 1\, \cup\, (1 + 2\star T)$,
so $T$ satisfies the set-equation $Y = 1\,\cup\,(1+2\star Y)$. 
 (See $\S$\ref{SectSetOps} for the notation used here.)
Solving this set-equation gives the periodic spectrum 
$T=  1+ 2\cdot \bbN$. 

There were two stages in our study \cite{BBY2006} of a single equation. The first 
looked at $y = G(x,y)$ where $G(x,y)$ was a power series with non-negative coefficients. 
The second looked at more complex 
equations $y = \Theta(y)$ involving operators like Multiset, Sequence and Cycle. 
The same two stages will be 
followed in this study of generating functions defined by systems of equations. 


%
\section{Set Operations and Periodicity} \label{SectSetOps}

%
\subsection{Set operations}
The calculus of set-equations (for sets of non-negative integers) developed in this section 
uses the operations of
union ($\cup$, $\bigcup$), addition ($+$), multiplication ($\cdot$) and star ($\star$), where:

  \begin{definition}\label{set ops}
  For  $A,B\subseteq \bbN$ and $n\in \bbN$ let
\begin{eqnarray*}
     A+B & := & \{a + b : a\in A, b\in B\}\\
 n\cdot B &:=& \{nb : b\in \}\\
     n\star B&:=&
    \begin{cases}
       0&\text{for } n=0\\
       \underbrace{B+ \cdots +B}_{\text{$n$ copies of $B$}}
       & \text{ for } n > 0 

    \end{cases} \\
   A\star B &:=&   \bigcup_{a\in A} a\star B
\end{eqnarray*}
\end{definition}

The values of these operations when an argument is the empty set are: 
$\O+A = A+\O = \O$, $n\cdot \O = \O$, 
 $\O \star B = \O$, and
 $A\star \O = 0$ if $0\in A$, otherwise $A\star \O = \O$.

The obvious definition of $A\cdot B$ is not needed in this 
 study of spectra; only the special case $n\cdot B$
plays a role.
The next lemma gives the  basic identities regarding $\cup, +,\star$ 
needed for this analysis of spectra (all are easily proved).
%
\begin{lemma} \label{basic identities}  
For $A,B,C \subseteq \bbN$ and $m,n\in\bbN$
\begin{eqnarray*}
   A + (B\cup C)  & = &  (A + B) \cup (A+C) \label{equiv1}\\
   n\star(A+B)
    & = &  n\star A +  n\star B \label{equiv2}\\
%
   (A+B)\star C
    & = &  A\star C + B\star C\quad  \label{equiv3}\\
 m\star ( n\star B)
    & = &  ( m \cdot   n )\star B \label{equiv4a}\\
   (A\cup B)\star C
   & = & A\star C \cup B\star C  \label{equiv4b}\\
   A\star\big( B\cup C\big)
   &= &\bigcup_{\substack{j_1,j_2\in\bbN\\j_1+j_2\in A}}
    j_1\star B+ j_2\star C.  \label{equiv5} 
\end{eqnarray*}
\end{lemma}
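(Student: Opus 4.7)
The plan is to verify each identity by a direct element-chasing argument, with induction on the natural-number parameter whenever a $\star$ with a number as its first argument is involved. The main tool is the recursive unfolding $(n+1)\star B = n\star B + B$ from Definition~\ref{set ops}, together with the associativity and commutativity of $+$ on subsets of $\bbN$ (inherited pointwise from $\bbN$). The boundary conventions listed just after Definition~\ref{set ops} for arguments equal to $\O$ or $0$ need to be checked along the way, but in every case this amounts to a routine inspection.

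For the first identity $A + (B\cup C) = (A+B)\cup(A+C)$, I would argue that $x$ belongs to the LHS exactly when $x = a + d$ with $a\in A$ and $d\in B\cup C$, then split on whether $d\in B$ or $d\in C$. The second identity $n\star(A+B) = n\star A + n\star B$ follows by induction on $n$: the induction step unrolls one copy of $A+B$, uses commutativity to regroup the $A$-summands together and the $B$-summands together, and then invokes the hypothesis. The third identity $(A+B)\star C = A\star C + B\star C$ is analogous, via the special case $(a+b)\star C = a\star C + b\star C$ coming from the second. The fourth, $m\star(n\star B) = (mn)\star B$, is an easy induction on $m$. The fifth, $(A\cup B)\star C = A\star C \cup B\star C$, is an immediate unpacking of the definition $D\star C = \bigcup_{d\in D}\, d\star C$.

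The main content, and the only step requiring a genuine combinatorial observation, is the last identity. By the definition of $A\star$ the LHS equals $\bigcup_{a\in A} a\star(B\cup C)$, so it suffices to establish the fixed-$a$ statement
\[
a\star(B\cup C) \ =\ \bigcup_{j_1+j_2 = a} \bigl(j_1\star B + j_2\star C\bigr).
\]
An element of the LHS is a sum $d_1 + \cdots + d_a$ with each $d_i\in B\cup C$; partitioning the summands by whether they are chosen from $B$ or from $C$ yields $j_1$ summands from $B$ and $j_2 = a - j_1$ from $C$, exhibiting the sum as an element of $j_1\star B + j_2\star C$. The reverse inclusion is automatic, since any element of $j_1\star B + j_2\star C$ with $j_1+j_2 = a$ is manifestly a sum of $a$ elements of $B\cup C$. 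Taking the union over $a\in A$ gives the stated formula. The only real subtlety is again the boundary cases when $a=0$ or one of $B, C$ is empty, which are settled by direct appeal to the conventions of Definition~\ref{set ops}.
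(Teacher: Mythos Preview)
Your proposal is correct. The paper itself does not give a proof of this lemma at all; it simply states that the identities ``are easily proved'' and moves on. What you have written is a perfectly good execution of that claim: element-chasing plus induction on the integer parameter, with the last identity reduced to the fixed-$a$ case $a\star(B\cup C)=\bigcup_{j_1+j_2=a}\bigl(j_1\star B+j_2\star C\bigr)$ via the obvious partition of the $a$ summands. One minor remark: the special case $(a+b)\star C=a\star C+b\star C$ that you invoke for the third identity does not really ``come from'' the second identity (which has the integer on the outside and the $+$ on the inside); it is just associativity of the sumset operation, or equivalently a separate one-line induction on $a$. This does not affect correctness.
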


%
\subsection{Periodic and eventually periodic sets}
 The following characterizations of periodic and eventually periodic sets are easily proved, if not
 well known.
 %
 %
\begin{lemma} \label{split}
Let $A\subseteq \bbN$. 
\begin{thlist}
\item
   $A$ is periodic iff there is a finite set  $A_1\subseteq \bbN$ and a positive integer $p$ 
   $($called a period for $A$$)$
    such that
   \[
     A = A_1 + p \cdot  \bbN
   \]
   iff $A$ is the union of finitely many arithmetical progressions.
\item \text{\em (Durand, Fagin, Loescher \cite{DFL1997}; Gurevich and Shelah \cite{GuSh2003})}
     $A$ is eventually periodic iff there are finite sets $A_0,A_1\subseteq \bbN$ and a positive integer $p$ 
     $($called an {\em eventual period} of $A$$)$ such that
   \[
      A = A_0 \, \cup \,  (A_1+ p \cdot \bbN)
   \]
      iff $A$ is the union of a finite set and finitely many arithmetical progressions.
\end{thlist}
\end{lemma}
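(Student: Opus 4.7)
My plan is to prove part (a) by decomposing $A$ into its residue classes modulo the period, and then derive part (b) from (a) by stripping off a finite initial segment so that what remains is genuinely periodic.

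For (a), the reverse direction is immediate: if $A = A_1 + p\cdot\bbN$, then for every $a \in A$ we have $p + a \in A_1 + p\cdot\bbN = A$, so $p$ is a period. For the forward direction, assume $p$ is a period of $A$. For each residue $r \in \{0,1,\ldots,p-1\}$ that is attained by some element of $A$, let $m_r$ denote the least element of $A$ with $m_r \equiv r \pmod{p}$. Periodicity gives $m_r + p\cdot\bbN \subseteq A$, and conversely any $a \in A$ with residue $r$ satisfies $a \ge m_r$ together with $a \equiv m_r \pmod{p}$, forcing $a \in m_r + p\cdot\bbN$. Taking $A_1$ to be the (at most $p$) values $m_r$ then yields $A = A_1 + p\cdot\bbN$. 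The reformulation as a finite union of arithmetical progressions follows from $A_1 + p\cdot\bbN = \bigcup_{a\in A_1}(a + p\cdot\bbN)$; conversely, any union $\bigcup_{i=1}^{k}(a_i + p_i\cdot\bbN)$ can be rewritten with common period $p := \mathrm{lcm}(p_1,\ldots,p_k)$ by splitting each $a_i + p_i\cdot\bbN$ into $p/p_i$ progressions of step $p$.

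For (b), suppose $A$ is eventually periodic with threshold $m$ and eventual period $p$. Set $A_0 := A \cap [0,m)$, which is a finite subset of $\bbN$, and set $B := A \cap [m,\infty)$. For each $b \in B$ we have $p + b \ge m$, so $p + b \in A \cap [m,\infty) = B$, showing that $B$ is periodic with period $p$. Applying part (a) produces a finite $A_1$ with $B = A_1 + p\cdot\bbN$, whence $A = A_0 \cup (A_1 + p\cdot\bbN)$. The converse is again routine: for any $a \ge \max(A_0)+1$ we have $a \in A_1 + p\cdot\bbN$, so $p + a \in A_1 + p\cdot\bbN \subseteq A$. The equivalent description as the union of a finite set and finitely many arithmetical progressions is then obtained exactly as in (a).

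The arguments are routine bookkeeping rather than conceptually difficult, so there is no real obstacle. The one subtle point worth flagging is that in (a) not every residue class modulo $p$ need meet $A$, so $|A_1|$ may be strictly less than $p$; this is what reconciles the characterization with examples such as the spectrum $T = 1 + 2\cdot\bbN$ of planar binary trees mentioned earlier, where $p = 2$ but $|A_1| = 1$.
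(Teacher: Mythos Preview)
Your proof is correct and follows the natural route. The paper itself does not supply a proof of this lemma at all: it prefaces the statement with the remark that these characterizations ``are easily proved, if not well known'' and leaves it at that, so there is nothing to compare against beyond confirming that your argument is the standard one the authors had in mind.
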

 
\begin{remark}
An infinite union of arithmetical progressions 
need not be eventually periodic.
Let $U$ be the union of the arithmetical progressions 
$ a\cdot\mathbb{\bbP}$, 
where $a$ is a composite number. 
Then $U$ consists of all composite numbers. 

Given any positive integer $p$, choose a prime number $q$ that does not divide $p$. 
Then by Dirichlet's theorem the arithmetical progression $q^2 + p\cdot \mathbb{N}$ 
has an infinite number of primes, so $q^2 +p\cdot \mathbb{N}$ is not a subset of $U$.
Since $q^2\in U$, it follows that $p$ is not an eventual period for $U$ $($one
can choose $q$ arbitrarily large$)$.  Thus $U$ is not eventually periodic.
\end{remark}

\begin{lemma}\label{ground case}
Suppose $A,B,C\subseteq\bbN$ are [eventually] periodic.  
Then each of the following are [eventually] periodic:
\begin{thlist}
\item
   $A\, \cup \,  B$
 \item
   $A+ B$
 \item
   $A\star B$.
\end{thlist}
In $($c\:$)$, if $A$ is periodic and $B$ is eventually periodic, then $A\star B$ is actually periodic.
\end{lemma}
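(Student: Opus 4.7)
The plan is to use the decomposition characterization of Lemma \ref{split} throughout. For (a), after replacing the two periods by their least common multiple $p$, I would write each set in the form
\[
A = A_0 \cup (A_1 + p \cdot \bbN), \quad B = B_0 \cup (B_1 + p \cdot \bbN);
\]
the union is then manifestly of the same shape. For (b), I would fix a common eventual period $p$ and argue that once $c = a + b \in A + B$ exceeds the threshold $m_A + m_B$, at least one of $a, b$ must lie past its own threshold, so stepping that summand up by $p$ witnesses $c + p \in A + B$. The periodic subcases correspond to $A_0 = B_0 = \emptyset$ with zero thresholds, and the same arguments go through.

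The substance of the lemma is (c). My approach is to fix any $b^* \in B$ (handling the degenerate $B = \emptyset$ separately, where $A \star B \subseteq \{0\}$) and to show that $P := p \cdot b^*$ is an (eventual) period of $A \star B$, where $p$ is a common eventual period for $A$ and $B$ with thresholds $m_A, m_B$. The key observation is: given a witnessing decomposition $c = b_1 + \cdots + b_a$ with $a \in A$ and $b_i \in B$, the element $c + P$ can be produced in $A \star B$ in either of two ways---by appending $p$ extra copies of $b^*$, which shifts the length $a$ to $a + p$ (good when $a + p \in A$, i.e.\ when $a \geq m_A$), or by incrementing a single $b_i$ by $P$, keeping the length at $a$ (good when $b_i + P \in B$, i.e.\ when $b_i \geq m_B$, which holds because $P$ is a multiple of $p$).

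The case split is on whether $a \geq m_A$: if so, the first maneuver works; if not, the pigeonhole observation that $c \geq m_A m_B$ combined with $a < m_A$ forces some $b_i \geq m_B$, so the second maneuver works. This shows $A \star B$ is eventually periodic with period $P$ and threshold $m_A m_B$. For the sharper assertion, when $A$ is genuinely periodic the threshold $m_A$ may be taken to be $0$, so the first maneuver applies to every $c \in A \star B$ without any size restriction, and $P$ is then an honest period of $A \star B$. The delicate point is this second case, together with the choice of $P$ as a multiple of $p$ that simultaneously serves as a period on decomposition lengths (for $A$) and on individual summands (for $B$); the rest is bookkeeping with the identities of Lemma \ref{basic identities}.
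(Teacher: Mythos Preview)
Your argument is sound, with one small caveat on the stated threshold: when $0\in A$, $m_A>0$, and $m_B=0$, the element $c=0$ lies at or above $m_A m_B=0$ but admits only the length-zero decomposition, so neither maneuver applies. This is harmless---take $m_A,m_B\ge 1$ without loss of generality, or use threshold $\max(1,m_A m_B)$---but it is worth patching.

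Your route differs genuinely from the paper's. The paper works algebraically: it splits $A=A_0\cup(A_1+r\cdot\bbN)$ via Lemma~\ref{split} and then uses the identities of Lemma~\ref{basic identities} to expand $A_0\star B$ into a finite union of sets of the form $(\text{finite})+(\text{arithmetic progression})$, each eventually periodic; separately it shows $(A_1+r\cdot\bbN)\star B\supseteq rb+(A_1+r\cdot\bbN)\star B$ for any $b\in B$, giving outright periodicity of that piece. Your element-chasing argument with the two maneuvers (lengthen the decomposition versus enlarge a single summand) is more direct and produces an explicit eventual period $P=pb^*$ in one stroke, without first decomposing $A$; the pigeonhole step replaces the paper's finite expansion of $A_0\star B$. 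For the final sentence of the lemma the two arguments coincide: your ``$m_A=0$ so the first maneuver always applies'' is exactly the paper's observation that $A_0=\O$ leaves only the periodic part.
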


\begin{proof}
Parts (a) and (b) follow easily from Lemma \ref{split} and Lemma \ref{basic identities}.
(The eventually periodic case is discussed in \cite{GuSh2003}.)

For the eventually periodic case of (c),
choose positive integers $p,q$ so that $ p+A$ is eventually in $A$ and $q+B$ is eventually in $B$, and
use Lemma  \ref{split} to express each of $A$ and $B$ as 
the union of a finite set and finitely many arithmetical progressions, say $A = A_0 \cup (A_1+r\cdot \bbN)$
and $B = B_0 \cup (B_1 + s\cdot \bbN)$.

 Starting with
\[
   A\star B\ =\ A_0\star B \, \cup \,  (A_1+ r  \cdot  \bbN)\star B,
\]
from \eqref{equiv4b},
examine the two parts of the right side. For $A_0\star B$, using Lemma \ref{basic identities},
\begin{eqnarray*}
   A_0\star B  & = &  \bigcup_{j\in A_0} j\star \Big(B_0\, \cup \,  (B_1 + s  \cdot  \bbN )\Big)\\
   & = &   \bigcup_{j_0 + j_1 \in A_0} j_0\star B_0 +  j_1\star(B_1 + s  \cdot  \bbN)\quad
  \text{by \eqref{equiv5}} \\
   & = &  \left(\bigcup_{j_0  \in A_0} j_0 \star B_0\right) \, \cup \,  
   \left(\bigcup_{\substack{j_0 + j_1 \in A_0\\ j_1>0}} j_0 \star B_0 +  
   j_1\star B_1 + j_1 s  \cdot  \bbN\right)\quad\text{by \eqref{equiv2}} ,
\end{eqnarray*}
a union of  finitely many eventually periodic sets, hence eventually periodic by (a).

For $(A_1+ r  \cdot  \bbN)\star B$, choose $b\in B$. Then, again using Lemma \ref{basic identities},
\begin{eqnarray*}
   (A_1+ r  \cdot  \bbN)\star B
   & \supseteq &  (A_1+r + r  \cdot  \bbN)\star B\\
   & = &  r\star B + (A_1+ r  \cdot  \bbN)\star B\quad\text{by \eqref{equiv3}}\\
   &\supseteq& rb +  (A_1+ r  \cdot  \bbN)\star B .
\end{eqnarray*}
Thus $(A_1+r   \cdot  \bbN)\star B$ is actually periodic.
This shows $A\star B$ is a union of two eventually periodic sets, hence it is also
eventually periodic. 

For item (c), note that  $A$ is periodic means we can assume $A_0=\O$.  Then 
the argument for the second part above shows that $A$ is also periodic.

\end{proof}

\subsection{Periodicity parameters}

For $A\subseteq \bbN$, for $n\in \bbN$, define 
\begin{eqnarray*}
A - n &:=& \{a-n : a\in A\}\\
\gcd(0) & :=& 0\\
\fm(\O) \ :=\ \min(\O) &:=& \infty.
\end{eqnarray*}
The next definition gives some important parameters for the study of periodicity.
%
  \begin{definition} [Periodicity parameters] \label{defn per param}
  For an eventually periodic set $A\subseteq \bbN$, $A\neq \O$, let
\begin{thlist}
\item
	$\fm(A) := \min(A)$
\item
	$\fq(A) := \gcd\big(A - \fm(A)\big)$
\item
	$\fp(A)$ is the minimum of the eventual periods $p$ if $A$ is infinite; otherwise it is $0$.
\item
	$\fc(A)$ is the first element $k$ where $\fp(A)$ becomes a period for $A\cap[k,\infty)$.
\end{thlist}
\end{definition}

The following table gives the calculations of $\fm$ and $\fq$ on combinations of
                            non-empty sets
                using the  operations $\cup,+,\star$.
\begin{proposition} \label{Karen's Table}
  Let $A_1, A_2 \subseteq \mathbb{N}$ be non-empty and eventually periodic,
  with $\fm_i:= \fm(A_i), \fq_i := \fq(A_i)$, for $i=1,2$.  
  Then  
\[
\begin{array}{l @{\qquad }l @{\qquad}l}
 \text{Set} & \fm & \fq  \\
  \hline
  A_1\cup A_2 & \min (\fm_1, \fm_2) & \gcd(\fq_1, \fq_2, \fm_2-\fm_1) \\
  A_1 + A_2 & \fm_1+\fm_2 & \gcd(\fq_1, \fq_2) \\
  A_1 \star A_2 & \fm_1\fm_2& 
  \begin{cases}
  0&\text{if } A_1 = 0\\
 \gcd (\fq_2,\,  \fq_1\fm_2) & \text{if }A_1\neq 0.
 \end{cases}
\end{array}
\]
\end{proposition}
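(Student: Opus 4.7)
The plan is to verify each row of the table by direct computation. The $\fm$ column is immediate: $\fm(A_1 \cup A_2) = \min(\fm_1, \fm_2)$ and $\fm(A_1 + A_2) = \fm_1 + \fm_2$ follow from the definitions, while $\fm(A_1 \star A_2) = \fm_1 \fm_2$ follows because the smallest element of $a \star A_2$ is $a \fm_2$ (by iterating the sum case), and minimizing over $a \in A_1$ gives $\fm_1 \fm_2$. The substance lies in the $\fq$ column.

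I would first isolate two elementary gcd identities for non-empty $X, Y \subseteq \bbN$ (interpreting $\gcd$ of a negative integer as the gcd of its absolute value). For the union, assuming without loss of generality $\min X \le \min Y$,
\[
\gcd\bigl((X \cup Y) - \min X\bigr) \;=\; \gcd\bigl(\fq(X),\; \fq(Y),\; \min Y - \min X\bigr),
\]
which follows from $Y - \min X = (Y - \min Y) + (\min Y - \min X)$ together with the fact that adding a constant $c$ to a subset of $\bbN$ containing $0$ shifts its gcd to $\gcd(c, \text{original gcd})$. For the sum,
\[
\gcd\bigl((X + Y) - (\min X + \min Y)\bigr) \;=\; \gcd(\fq(X),\; \fq(Y)),
\]
which is immediate from the standard fact that $\gcd(X' + Y') = \gcd(\gcd(X'), \gcd(Y'))$ for non-empty $X', Y' \subseteq \bbN$ both containing $0$. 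These two identities yield the $A_1 \cup A_2$ and $A_1 + A_2$ rows directly.

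For the star row I would decompose $A_1 \star A_2 = \bigcup_{a \in A_1} a \star A_2$. When $A_1 = \{0\}$ the union is $\{0\}$, giving $\fm = 0$ and $\fq = 0$, matching the first branch of the formula. Otherwise, iterating the sum identity shows $\fm(a \star A_2) = a \fm_2$ and $\fq(a \star A_2) = \fq_2$ for each $a > 0$ in $A_1$; the slice $a = 0$ (if $0 \in A_1$) contributes $\{0\}$, which does not alter the final gcd. Applying the union identity iteratively over $a \in A_1$, the per-slice $\fq$-contributions all equal $\fq_2$, and the differences of slice-minima from $\fm_1 \fm_2$ are $(a - \fm_1)\fm_2$ for $a \in A_1$, whose collective gcd is $\fm_2 \cdot \gcd(A_1 - \fm_1) = \fm_2 \fq_1$. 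Combining yields $\fq(A_1 \star A_2) = \gcd(\fq_2, \fm_2 \fq_1)$, the second branch.

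The main obstacle is tracking the degenerate cases where some $\fm_i$ or $\fq_i$ vanishes, in particular the interplay with the conventions $\gcd(0) = 0$ and $\fm(\{0\}) = \fq(\{0\}) = 0$. I would verify the general star formula against the $A_1 = \{0\}$ branch and the edge case $A_2 = \{0\}$ to confirm that the limits in both preliminary gcd identities pass through consistently, and that the $0$-contributions in the iterated union step are correctly absorbed.
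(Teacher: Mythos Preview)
Your proposal is correct and follows essentially the same approach as the paper: both isolate the two elementary gcd facts (for sums and for $a\star V$ when $0\in V$), dispatch the union and sum rows by direct computation after shifting by the minimum, and handle the star row by decomposing $A_1\star A_2=\bigcup_{a\in A_1}a\star A_2$ and tracking the per-slice minima and gcds. The only cosmetic difference is that the paper writes the star computation as a single chain of equalities rather than invoking the union identity as a separate lemma.
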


\begin{proof}
The calculations for $\fm$ are clear in each case.  

Let $a \in \bbN$, $U,V\subseteq \bbN$. Then
\begin{eqnarray*}
0\in U+V&\Rightarrow&\gcd(U+V) = \gcd(\gcd(U),\gcd(V))\\
a > 0 \in V &\Rightarrow&\gcd(a\star V) = \gcd(V).
\end{eqnarray*} 

For $A := A_1 \cup A_2$: Let $\fm := \fm(A)$, $\fq := \fq(A)$,
and suppose, without loss of generality, that $\fm_1\le\fm_2$.
Then $\fm = \fm_1$, so 
\begin{eqnarray*}
\fq&=& \gcd \Big((A_1 - \fm_1) \cup (A_2-\fm_1)\Big)
\ =\  \gcd \Big(\gcd(A_1 - \fm_1), \gcd (A_2-\fm_1)\Big)\\
&=& \gcd \Big(q_1, \gcd (A_2 - \fm_2 + \fm_2-\fm_1)\Big)
\ =\  \gcd \Big(q_1, \gcd \big(\gcd(A_2 - \fm_2), \fm_2-\fm_1)\big)\Big)\\
&=& \gcd \Big(q_1, \gcd \big(q_2, \fm_2-\fm_1)\big)\Big)
\ =\   \gcd \big(q_1, q_2, \fm_2-\fm_1)\big).
\end{eqnarray*}

For $A := A_1 + A_2$: Let $\fm := \fm(A)$, $\fq := \fq(A)$.
Then 
\begin{eqnarray*}
\fq&:=& \gcd \big(A - \fm)
\ =\   \gcd \Big((A_1 - \fm_1) + (A_2-\fm_2)\Big)\\
&=& \gcd \Big(\gcd(A_1 - \fm_1),\, \gcd (A_2-\fm_2)\Big)
\ =\   \gcd \big(q_1, q_2).
\end{eqnarray*}
  
  For $A := A_1\star A_2$: Let $\fm := \fm(A)$, $\fq := \fq(A)$.
  If $A_1=0$ then $A=0$, so $\fq=0$.
  Now suppose $A_1\neq 0$.
Then 
\begin{eqnarray*}
\fq&:=& \gcd \big(A - \fm)\\
&=& \gcd \big(A_1\star A_2 - \fm_1\fm_2 \big)\\
&=&\gcd\Big( \bigcup_{a_1\in A_1} a_1\star A_2 - \fm_1\fm_2\Big)\\
&=&\gcd\Big( \bigcup_{a_1\in A_1} a_1\star (A_2 - \fm_2) +(a_1 - \fm_1)\fm_2\Big)\\
&=&\gcd\Big\{\gcd\Big(\gcd \big(a_1\star (A_2 - \fm_2)\big),\, (a_1 - \fm_1)\fm_2\Big) : 
a_1\in A_1\Big\}\\
&=&\gcd\Big\{\gcd\Big(\gcd \big(a_1\star (A_2 - \fm_2)\big),\, (a_1 - \fm_1)\fm_2\Big) : 
a_1\in A_1, a_1\neq 0\Big\}\\
&=&\gcd\Big\{\gcd\big( \fq_2,\, (a_1 - \fm_1)\fm_2\big) : a_1\in A_1, a_1\neq 0\Big\}\\
&=&\gcd\Big(\fq_2,\, \gcd\big((A_1 - \fm_1)\fm_2\big)\Big)\\
&=&\gcd\big(\fq_2,\,  \fq_1\fm_2\big).
\end{eqnarray*}
\end{proof}

\begin{definition} 
For $A\subseteq  \bbN$ and $c\in \bbN$ let 
$
A|_{\geq c} \ :=\ A\cap [c,\infty).
$
Likewise define $A|_{> c}$, $A|_{\leq c}$, and $A|_{< c}$.
\end{definition}

The next result concerns one of the best known examples of periodic sets, 
namely in the study of the Postage Stamp Problem, also known
as the Coin Problem 
(see Example \ref{postage}).
%
\begin{lemma}\label{lin comb}
Suppose $B\subseteq \bbN$ and $B\cap \bbP\neq \O$. 
Let $A = \bbN\star B$, and let 
$\fc := \fc(A)$, $\fp := \fp(A)$, $\fq  := \fq(A)$. Then
\begin{thlist}
\item
$A$ is periodic,
\item
$\fp = \fq = \gcd(A) = \gcd(B)$, and
\item
$A= A\big|_{<\fc} \cup \Big(\fc+\fq\cdot \bbN\Big)\  \subseteq\  \fq\cdot\bbN.$
\end{thlist}
\end{lemma}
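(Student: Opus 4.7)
The plan is to recognize $A=\bbN\star B$ as exactly the numerical semigroup generated by $B$, and then to reduce the periodicity claim to the classical Sylvester--Frobenius theorem on coin problems. Unwinding Definition \ref{set ops}, $0\star B=\{0\}$ and $n\star B=B+\cdots+B$ ($n$ summands) for $n>0$, so $A$ consists of $0$ together with every finite sum of elements of $B$. In particular $0\in A$ and $B\subseteq A$, and $A$ is closed under addition (an $m$-term sum plus an $n$-term sum is an $(m+n)$-term sum). Writing $d:=\gcd(B)$, which lies in $\bbP$ since $B\cap\bbP\neq\O$, every element of $B$---hence every finite sum of such elements---is divisible by $d$, giving $A\subseteq d\cdot\bbN$.

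The single nontrivial step is to show that $A$ contains every sufficiently large multiple of $d$. I would first select finitely many $b_1,\ldots,b_k\in B\cap\bbP$ with $\gcd(b_1,\ldots,b_k)=d$, which is possible because the gcd of any set of positive integers is already realized on some finite subset. Then the classical numerical semigroup $\bbN b_1+\cdots+\bbN b_k$ sits inside $A$, and the Sylvester--Frobenius theorem guarantees that this semigroup contains every multiple of $d$ beyond some finite threshold. If a self-contained argument is desired, one can observe that by B\'ezout the subgroup of $\bbZ/b_1\bbZ$ generated by $b_2,\ldots,b_k$ equals $d\bbZ/b_1\bbZ$, so every residue class of $d\bbZ$ modulo $b_1$ is represented by some element of $\bbN b_2+\cdots+\bbN b_k$; adding enough copies of $b_1$ to the minimal such representative then produces every sufficiently large multiple of $d$. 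Combined with $A\subseteq d\cdot\bbN$, this yields an integer $M$ with $A\cap[M,\infty)=d\cdot\bbN\cap[M,\infty)$.

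The three conclusions now fall out. For (a), any $b_0\in B\cap\bbP$ lies in $A$ and satisfies $b_0+A\subseteq A$ by closure under addition, so $b_0$ is a period. For (b), $0\in A$ gives $\fm(A)=0$ and hence $\fq(A)=\gcd(A)$; since $B\subseteq A\subseteq d\cdot\bbN$, we get $d\mid\gcd(A)\mid\gcd(B)=d$, so $\fq(A)=d$. The agreement on $[M,\infty)$ makes $d$ an eventual period, while any eventual period $p$ satisfies $a+p\in A\subseteq d\cdot\bbN$ for some $a\in A\cap d\cdot\bbN$ sufficiently large, forcing $d\mid p$ and so $\fp(A)\geq d$; equality gives $\fp(A)=d$. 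Finally, (c) is read off from the eventual agreement: with $\fq=d$ and $\fc:=\fc(A)$, the pattern on $[M,\infty)$ propagates back to show $A|_{\geq\fc}=\fc+\fq\cdot\bbN$, and the inclusion $A\subseteq\fq\cdot\bbN$ was established in the first paragraph.

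The main obstacle is the Sylvester--Frobenius input in the second paragraph; everything else is routine bookkeeping with the definitions and with the identities of Lemma \ref{basic identities}.
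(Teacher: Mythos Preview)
Your argument is correct. The paper does not actually prove this lemma; it simply points to Wilf's \emph{Generatingfunctionology}, \S 3.15, for the Sylvester route via partial fractions of the generating function $\prod_i (1-x^{b_i})^{-1}$. What you have written is the standard elementary alternative: identify $A=\bbN\star B$ with the numerical semigroup generated by $B$, and appeal to (or re-derive via B\'ezout) the Frobenius coin theorem to get cofinality of $A$ inside $d\cdot\bbN$. The generating-function method that the paper cites yields more information---an exact count of representations, hence asymptotics for the number of ways to realize each $n$---while your combinatorial argument is self-contained and avoids any analysis over $\bbC$. Either approach is classical and fully adequate for the statement as given; your choice is arguably better matched to the set-theoretic framework of the surrounding section.

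One small point worth making explicit in your write-up of (c): the paper's $\fc(A)$ is to be read as the least \emph{element of $A$} from which $\fp$-periodicity holds (otherwise Lemma~\ref{prop of p,q}(d) would fail, as the example $B=\{2,3\}$ shows). With that reading, $\fc\in A\subseteq d\cdot\bbN$, so $d\cdot\bbN\big|_{\ge\fc}=\fc+d\cdot\bbN$, and your ``propagates back'' step goes through cleanly.
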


\begin{proof}
(See, e.g., Wilf \cite{Wilf1994}, $\S$3.15, for a popular proof based on analyzing the 
asymptotics for the coefficients of a generating function via partial fractions over $\bbC$---this
method originated with Sylvester.)
\end{proof}

\begin{lemma} \label{prop of p,q}
Suppose $A$ is eventually periodic. 
Letting $\fc := \fc(A)$, $\fq := \fq(A)$, $\fp := \fp(A)$, one has the following.
\begin{thlist}
\item {\em (Gurevich and Shelah \cite{GuSh2003}, Cor. 3.3)}
The set of eventual periods of $A$ is $\fp\cdot \bbP$.

\item
$\fq\,\big|\,\fp$, and $\fp = \fq$ iff $\fp \,\big|\, A - \fm$.

\item
$A$ can be expressed as the union of a finite set and a single 
arithmetical progression iff \,$\fp = \fq \cdot (A|_{\geq \fc})$.

\item
If the condition of {\em(c)} holds then
one has $A = A\big|_{< \fc} \cup (\fc +\fp\cdot \bbN)$.

\end{thlist}
\end{lemma}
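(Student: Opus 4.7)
The plan is to invoke the cited Gurevich--Shelah result for (a), and to derive (b), (c), (d) from elementary manipulations of the periodicity parameters, using part (a) where needed. I assume throughout that $A$ is infinite, the finite case being trivial since then $\fp = 0$.

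For (b): the definition of $\fq$ gives $A \subseteq \fm + \fq \cdot \bbN$. Choose any $a$ in the eventually periodic tail of $A$; then both $a$ and $a + \fp$ lie in $A$ and hence in $\fm + \fq \cdot \bbN$, so $\fq \mid \fp$. For the equivalence $\fp = \fq \Leftrightarrow \fp \mid A - \fm$: the forward direction follows from $A - \fm \subseteq \fq \cdot \bbN = \fp \cdot \bbN$; the reverse direction follows because $\fp \mid A - \fm$ implies $\fp \mid \gcd(A - \fm) = \fq$, which combined with the divisibility $\fq \mid \fp$ already established forces $\fp = \fq$.

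For (c) and (d), set $B := A|_{\geq \fc}$ and rely on three observations: (i) $B$ and $A$ differ on a finite set, so they share the same set of eventual periods and in particular $\fp(B) = \fp$; (ii) by the definition of $\fc$, $\fp$ is a genuine period of $B$, giving $\fm(B) + \fp \cdot \bbN \subseteq B$; (iii) $\fc$ itself lies in $B$ (reading ``first element $k$'' as the smallest $k \in A$ with the stated property), so $\fm(B) = \fc$. For the $(\Leftarrow)$ direction of (c), assume $\fp = \fq(B)$; applying (b) to $B$ gives $B \subseteq \fm(B) + \fp \cdot \bbN$, which combined with (ii) yields $B = \fc + \fp \cdot \bbN$. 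This simultaneously proves (d) and exhibits $A = A|_{<\fc} \cup (\fc + \fp \cdot \bbN)$ as a finite set together with a single arithmetic progression. For the $(\Rightarrow)$ direction of (c), suppose $A = F \cup (a_0 + p \cdot \bbN)$ with $F$ finite; since $p$ is an eventual period, (a) gives $\fp \mid p$, while $\fp$-periodicity of the tail $a_0 + p \cdot \bbN$ forces $p \mid \fp$, so $\fp = p$. Consequently, for all sufficiently large $k$, and in particular for $k = \fc$, $A|_{\geq k}$ is a single arithmetic progression of common difference $\fp$, yielding $\fq(A|_{\geq \fc}) = \fp$.

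The main subtlety I anticipate is the reading of the definition of $\fc$: for (d) to match the stated formula $A = A|_{<\fc} \cup (\fc + \fp \cdot \bbN)$, one needs $\fc$ itself to belong to $A$ (otherwise $\fm(A|_{\geq \fc})$ strictly exceeds $\fc$, producing a different starting point for the arithmetic progression, e.g.\ if $A = \{0\} \cup (\{2,3,4,\ldots\})$ with $\fp=1$). Once this reading is fixed, the remaining verifications are routine bookkeeping with $\fm$, $\fq$, and $\fp$.
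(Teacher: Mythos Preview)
Your treatment of (a) and (b) matches the paper's essentially verbatim.

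For (c) and (d) the paper takes a different route: it introduces the window-count
\[
\fn \ :=\ \lim_{n\to\infty} \#\big([n, n+\fp-1] \cap A\big)
\]
and argues that $A$ is eventually a single arithmetic progression iff $\fn = 1$; monotonicity of this count along the $\fp$-spaced grid starting at $\fc$ then forces $A|_{\geq \fc} = \fc + \fp\bbN$ when $\fn=1$, and conversely a window with two elements witnesses $\fq(A|_{\geq \fc}) < \fp$. Your approach via $B = A|_{\geq \fc}$, using $\fp(B) = \fp$ and the containment $B \subseteq \fm(B) + \fq(B)\cdot\bbN$, is more direct and avoids the auxiliary counting function; it also makes the appeal to part (a) explicit, which the paper's argument does not need.

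There is, however, a real gap in your $(\Rightarrow)$ direction of (c). From $A = F \cup (a_0 + p\cdot\bbN)$ you correctly deduce $\fp = p$, and then write ``for all sufficiently large $k$, and in particular for $k = \fc$, $A|_{\geq k}$ is a single arithmetic progression.'' The clause ``in particular for $k = \fc$'' is a non-sequitur: nothing you have said rules out elements of $F$ lying in $[\fc,\infty)$. The missing step is this: for any $b \in A|_{\geq \fc}$, $\fp$-periodicity from $\fc$ gives $b + n\fp \in A$ for all $n \geq 0$, and once $b + n\fp > \max F$ this element must lie in $a_0 + \fp\cdot\bbN$, forcing $b \equiv a_0 \pmod{\fp}$. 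Hence all of $A|_{\geq \fc}$ sits in a single residue class modulo $\fp$, and combined with $\fc \in A$ and $\fp$-periodicity you get $A|_{\geq \fc} = \fc + \fp\cdot\bbN$, so $\fq(A|_{\geq \fc}) = \fp$. With this sentence inserted your argument is complete.

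Your closing remark about needing $\fc \in A$ for the formula in (d) is on point; the paper's proof relies on this reading as well.
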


\begin{proof}
The proof of (a) in \cite{GuSh2003} is  elementary, as are the following proofs for (b)--(d).
Let 
\[
\mathfrak{n} \ :=  \  \lim_{n\rightarrow \infty}\#\big( [n, n+\mathfrak{p}-1] \cap A\big).
\]
$\fn$ is well-defined since $A$ is eventually a union of arithmetical progressions by Lemma \ref{split}.  
So for $n$ of the same $\mathfrak{p}$-equvalence class the sequence stabilizes, and thus the count 
on the right side stabilizes.

(b): First we show $\fq \,\big|\, \fp$. If $A$ is finite then the proof is immediate since $\fp=0$.

Now assume $A$ is not finite.  
Take $a \in A$ large enough that by the eventual periodicity 
$a+\mathfrak{p} \in A$.  
Then $\mathfrak{q} \,|\, a - \mathfrak{m}$ and $\mathfrak{q} \,|\, a+\mathfrak{p} - \mathfrak{m}$. 
Thus $\mathfrak{q}$ divides their difference, so  
$
  \mathfrak{q}\, |\, \mathfrak{p}.
$

Clearly $\fp = \fq$ implies $\fp\,|\, (A - \fm)$ since $\fq\,|\, (A - \fm)$. Conversely, if 
$\fp\,|\, (A - \fm)$ then $\fp \,|\, \fq = \gcd(A - \fm)$; and since $\fq\,|\,\fp$ one has $\fp = \fq$. 

(c) and (d): $A$ is eventually a single arithmetical progression iff $\mathfrak{n} = 1$.  

Suppose $\mathfrak{n}=1$, then since the series defining $\mathfrak{n}$ is a 
nondecreasing series of integers on $A|_{\geq \mathfrak{c}}$ it must always be $0$ or $1$.  
Thus $A|_{\geq \mathfrak{c}}$ is precisely a single arithmetical progression giving (d).  
Further the $\mathfrak{q}$ of a single arithmetical progression is exactly the period.  
Thus $\mathfrak{q}(A|_{\geq \mathfrak{c}}) = \mathfrak{p}$

Suppose $\#( [n, n+\mathfrak{p}-1] \cap A) > 1$ for some $n\geq \mathfrak{c}$.  
Then taking the difference of two elements in this range one has
$\mathfrak{q}(A|_{\geq \mathfrak{c}}) < \mathfrak{p}$.

\end{proof}

Given $A\subseteq \bbN$ and a positive integer $n$, let
$[A]_n$ be the set of integers modulo $n$, so $[A]_n$ is a subset of
$\bbZ_n$,  the additive group of integers modulo $n$. 
Let $\overline{A}_n$ be a set of integers in $\{0,1,\ldots,n-1\}$ such
that $[\overline{A}_n]_n = [A]_n$.

%
\begin{lemma} \label{Use Zp}
Suppose $A\subseteq \bbN$ is periodic. Let $\fp := \fp(A)$.
\begin{thlist}
\item
For $m$ sufficiently large,
$$
A\big|_{\ge \fp m}\ =\ \big(\fp m+ \overline{A}_\fp\big) + \fp\cdot \bbN.
$$
\item
If $[A]_\fp$ is a subgroup of $\bbZ_\fp$ then $\fp = \fq$, 
and for $m$ sufficiently large,
$$
A\big|_{\ge \fp m}\ =\  \fp m + \fp\cdot \bbN.
$$
\end{thlist}
\end{lemma}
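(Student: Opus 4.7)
The plan is to prove (a) by examining $A$ one residue class mod $\fp$ at a time, and then derive (b) from (a) by ruling out all proper subgroups of $\bbZ_\fp$.

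For (a), the first step is to note that since $A$ is genuinely periodic with some period $p$, and every period is an eventual period, Lemma \ref{prop of p,q}(a) forces $\fp \mid p$. Consequently, for any $a \in A$ the entire sequence $a, a+p, a+2p, \ldots$ lies in $A$ and shares the residue $a \bmod \fp$, so every residue $r \in \overline{A}_\fp$ has infinitely many representatives in $A$. Combined with the fact that $\fp$ is an eventual period, this yields that $A \cap (r + \fp\bbN)$ is eventually a single arithmetic progression with common difference $\fp$, for each $r \in \overline{A}_\fp$. Choosing $m$ large enough that $\fp m \ge \fc(A)$ and $\fp m$ exceeds the finitely many ``sporadic'' elements in each residue class, the smallest element of $A \cap (r + \fp\bbN)$ that is $\ge \fp m$ is exactly $\fp m + r$, so $A|_{\ge \fp m} \cap (r + \fp\bbN) = (\fp m + r) + \fp\bbN$. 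Taking the union over $r \in \overline{A}_\fp$ gives (a).

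For (b), the entire content reduces to showing that the subgroup hypothesis forces $[A]_\fp = \{0\}$. Once that is known, $A \subseteq \fp\bbN$ and $\fm \in \fp\bbN$, so $\fp \mid A - \fm$ gives $\fp \mid \fq$; combined with $\fq \mid \fp$ from Lemma \ref{prop of p,q}(b) this yields $\fp = \fq$, and the displayed equality then follows from (a) with $\overline{A}_\fp = \{0\}$. To see $[A]_\fp = \{0\}$, write the subgroup as $\langle d \rangle$ for some $d \mid \fp$ and suppose $d < \fp$ for contradiction. Then $\overline{A}_\fp = \{0, d, 2d, \ldots, \fp - d\}$, so by (a) one has
$$A|_{\ge \fp m} = (\fp m + \overline{A}_\fp) + \fp\bbN = \fp m + d\bbN,$$
an arithmetic progression with common difference $d$. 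Thus $d$ is an eventual period of $A$, contradicting the minimality of $\fp$.

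The main obstacle is the step ``$[A]_\fp$ is a subgroup $\Rightarrow$ $[A]_\fp = \{0\}$'' in (b). This is emphatically \emph{not} a fact about subgroups alone: if $\fp$ were merely \emph{some} eventual period rather than the minimum, proper non-trivial subgroups arise trivially (e.g.\ $A = 2\bbN$ with $\fp$ taken to be $4$ gives $[A]_4 = \{0,2\}$). The argument must invoke both the subgroup structure and the minimality of $\fp$ simultaneously, and the natural way to marry them is through part (a), which converts a putative non-trivial subgroup into an arithmetic progression with step strictly less than $\fp$, directly violating minimality.
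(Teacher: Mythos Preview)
Your proof is correct and follows essentially the same route as the paper's: for (a) you work residue class by residue class modulo $\fp$, using that $\fp$ divides every period of $A$ to guarantee each class in $\overline{A}_\fp$ is hit infinitely often and then invoking the definition of $\fc$; for (b) you identify the subgroup with $d\bbZ/\fp\bbZ$ for a divisor $d\mid\fp$, observe via (a) that $d<\fp$ would make $d$ an eventual period, and contradict minimality of $\fp$. The paper does exactly this (writing the generator as $g$ rather than $d$), so there is no substantive difference in approach.
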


\begin{proof}

For (a), note that if $j\in \overline{A}_\fp$ then there is an $a \in A$ such 
that $a \equiv j \mod \fp$. Let $p$ be a period for $A$. Then 
$a +p\cdot \bbN\subseteq A$. Now $a + pn$ is eventually $\ge \fc$,
and $a+pn \equiv j \mod \fp$ (since $\fp\,\big|\,p$). Thus for $j\in \overline{A}_\fp$
there is an $a\in A$ such that $a\equiv j \mod \fp$, and $a + \fp\cdot \bbN\subseteq A$.
Writing $a = j + \fp m_j$, let $m = \max\big(m_j : j\in \overline{A}_\fp\big)$.
Then
$$
A\big|_{\ge \fp m}\ =\ \big( \fp m + \overline{A}_\fp \big) + \fp\cdot \bbN
$$

For item (b), let $[g]_\fp$ be a generator for the subgroup $[A]_\fp$, 
where $0\le g<\fp$. Then 
$$
\overline{A}_\fp \ =\ \{0,g, \ldots, (r-1)g\},
$$
where $r$ is the order of $[g]_\fp$ in $\bbZ_\fp$. 
By (a), for a sufficiently large choice of $m$ one has
$$
A\big|_{\ge \fp m}\ =\ \fp m + \{0, g, \ldots, (r-1)g\} + \fp\cdot \bbN.
$$

If $g>0$ then  $g | \fp$.  From this one has
\begin{eqnarray*}
A\big|_{\ge \fp m} 
&=& \fp m + \{0,g,\ldots, (r-1)g\} + \fp\cdot \bbN\\
&=& \fp m +  g\cdot \bbN,
\end{eqnarray*}
contradicting the fact that $\fp$ is the smallest eventual period of $A$.

Thus $g=0$, which leads to $\fp \,|\, A$ and
$$
A\big|_{\ge \fp m}\ =\ \fp m +  \fp\cdot \bbN.
$$
From $\fp \,|\, A$ one has $\fp \,|\, A-\fm$, and thus $\fp = \fq$, by Lemma \ref{prop of p,q} (b).

\end{proof}

The next lemma augments the results of Lemma \ref{prop of p,q}\,(c),(d), giving a simple
condition that is sufficient to guarantee that $A$ is a periodic set 
involving a single arithmetical progression. This is used in the study of non-linear
systems defining generating functions.

\begin{lemma} \label{dbl lem}
Suppose $A\subseteq\bbN$ with $A\cap\bbP\neq \O$,  and suppose there
are integers $r\ge 0$ and $s\ge 2$ such that
\begin{equation*}\label{dbl cond}
  A \  \supseteq \  r + s\star A .
\end{equation*}
 Let $\fc := \fc(A)$, $\fm := \fm(A)$, $\fp := \fp(A)$ and $\fq := \fq(A)$.
Then $A$ is a periodic set, $\fp =\fq$, and
  $$
 A \ =\  A\big|_{< \fc} \cup (\fc +\fp\cdot \bbN).
 $$
  \end{lemma}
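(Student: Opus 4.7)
The plan is to argue in three stages: first establish that $A$ is periodic, then show that the residues modulo $\fp$ of the periodic tail $A\big|_{\geq\fc}$ collapse to a single class, and finally extend that collapse to all of $A$ to obtain both $\fp = \fq$ and the claimed decomposition.

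Periodicity is immediate: choosing any $a \in A \cap \bbP$ and setting $a_1=\cdots=a_{s-1}=a$ in the hypothesis gives $r + (s-1)a + A \subseteq A$, so $r+(s-1)a \ge 1$ is a period of $A$. Thus $\fp \ge 1$ and $\fc$ is finite. Any sum of $s$ elements $\ge \fc$ is itself $\ge \fc$, so the hypothesis restricts to $r + s\star A\big|_{\geq\fc} \subseteq A\big|_{\geq\fc}$; reducing mod $\fp$ and writing $Y := [A\big|_{\geq\fc}]_\fp \subseteq \bbZ_\fp$ gives $[r]_\fp + s\star Y \subseteq Y$, and iterating $n$ times,
\[
r\cdot\tfrac{s^n-1}{s-1} \;+\; s^n\star Y \;\subseteq\; Y .
\]

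The main obstacle is to exploit this iterated inclusion. Fixing $y_0 \in Y$ and setting $H := \langle Y - y_0\rangle \le \bbZ_\fp$, I note that $0 \in Y - y_0$ and $\bbZ_\fp$ is finite, so the Minkowski sums $k\star(Y - y_0)$ are monotone non-decreasing and saturate to $H$ once $k$ is large enough. Hence for large $n$, $s^n\star Y = s^n y_0 + H$ is an entire $H$-coset, and the displayed inclusion places it inside the single coset $y_0 + H$ containing $Y$; so $Y = y_0 + H$. Writing $H = n\bbZ_\fp$ with $n \,|\, \fp$, periodicity of $A\big|_{\geq\fc}$ with residue set $y_0 + n\bbZ_\fp$ shows $A\big|_{\geq\fc}$ is eventually a single arithmetic progression with common difference $n$, so $n$ is an eventual period of $A$; minimality of $\fp$ combined with $n\,|\,\fp$ forces $n = \fp$, so $|H|=1$ and $Y = \{y_0\}$. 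In particular $\fq(A\big|_{\geq\fc}) = \fp$, and Lemma~\ref{prop of p,q}(c),(d) deliver $A = A\big|_{<\fc} \cup (\fc + \fp\cdot\bbN)$.

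Finally, to obtain $\fp = \fq$ I would apply the hypothesis one more time: for any $a \in A$ and any sufficiently large $b \in A\big|_{\geq\fc}$, the element $r + a + (s-1)b$ lies in $A\big|_{\geq\fc}$ and therefore has residue $y_0$ modulo $\fp$. Since $[b]_\fp = y_0$, this forces $[a]_\fp = -(s-2)y_0 - [r]_\fp$, a constant independent of $a$. Hence every element of $A$ lies in one residue class mod $\fp$, so $\fp \,|\, (A - \fm)$ and $\fp \,|\, \fq$; combined with $\fq \,|\, \fp$ from Lemma~\ref{prop of p,q}(b), we conclude $\fp = \fq$.
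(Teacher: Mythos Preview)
Your argument is correct, but it follows a genuinely different route from the paper's. The paper first shifts to $B := A - \fm$ so that $0\in B$; then from $B \supseteq b + s\star B$ (with $b = r + (s-1)\fm$) and $0\in B$ it extracts both $B \supseteq b + B$ and $B \supseteq b + B + B$, iterates the first to obtain $B \supseteq b\fp + B + B$, and reduces modulo $\fp$ to get $[B]_\fp \supseteq [B]_\fp + [B]_\fp$. Thus $[B]_\fp$ is a sub\emph{group} of $\bbZ_\fp$, and Lemma~\ref{Use Zp}(b) immediately yields $\fp = \fq$ together with the decomposition. By contrast, you work with the tail $A\big|_{\ge\fc}$ directly, iterate the hypothesis to replace $s$ by $s^n$, and use Minkowski-sum saturation to force the residue set $Y$ to be a full coset of some subgroup $H$; minimality of $\fp$ then collapses $H$ to the trivial group. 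Because you never shifted, you need a separate closing argument to pin down the residues of elements below $\fc$.

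Each route has its advantages. The paper's shift-by-$\fm$ trick treats all of $A$ uniformly and defers the endgame to the existing Lemma~\ref{Use Zp}, making the proof shorter. Your argument is more self-contained (you effectively re-prove the relevant content of Lemma~\ref{Use Zp} inline via the saturation step), and the ``iterate until the sumset fills a coset'' idea is a pleasant alternative that avoids having to first arrange $0$ in the set.
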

  
  \begin{proof}
Choose $t\in (s-1)\star A$. 
Then
  	$A \  \supseteq\  r + t + A$, 
so $A$ is periodic.

 Next let $B := A - \fm$, a subset of $\bbN$ with 0 in it (since $\fm\in A$).
 Furthermore $B$ is periodic and $\fp(B) = \fp$, $\fq(B) = \fq$.
 Letting $b = r + (s-1)\fm $,
 \begin{align*}
B
 &=\ A - \fm \ \supseteq\  r + s\star A - \fm,  \nonumber
 \intertext{so}
B &\supseteq\ b + s\star B. \nonumber
\intertext{Since $0 \in B$ and $s \ge 2$,}
B&\supseteq\ b + B\quad\text{and}\quad B\ \supseteq\ b + B + B. \nonumber
\intertext{From this one easily derives}
B\ \supseteq\ b\:\fp + B +  B,
\intertext{so reducing modulo $\fp$,}
[B]_\fp\ \supseteq\  [B]_\fp +  [B]_\fp.
   \end{align*}
 This means $[B]_\fp$ is a subgroup of $\bbZ_\fp$, so, by Lemma \ref{Use Zp}\,(b),
 $\fp = \fq$, and
 for $m$ sufficiently large,
 $$
 B\big|_{\ge \fp m}\ =\  \fp m +  \fp\cdot \bbN,
 $$
 which gives
  $$
 A\big|_{\ge \fm + \fp m}\ =\  \fm + \fp m +  \fp\cdot \bbN.
 $$
 But then, by Lemma \ref{prop of p,q} (d),
 $A = A\big|_{< \fc} \cup (\fc +\fp\cdot \bbN)$.

  \end{proof}

\section{Systems of Set-Equations}

We will consider systems of set-equations of the form
\begin{eqnarray*}
Y_1&=&\gam_1(Y_1,\ldots,Y_k)\\
&\vdots&\\
Y_k&=&\gam_k(Y_1,\ldots,Y_k),
\end{eqnarray*}
written compactly as $\bY = \bgam(\bY)$, with the $\gam_i(\bY)$ 
having a particular form, namely
\begin{equation} \label{Gamma form}
 \gam_i(\bY) \ =\ \bigcup_{\bu\in\bbN^k} \gam_{i,\bu} + u_1\star Y_1 +\cdots + u_k\star Y_k,
\end{equation}
where the $\gam_{i,\bu}$ are subsets of $\bbN$.
The system of equations \eqref{Gamma form} is simply expressed by
\begin{equation} \label{bGamma form}
 \bgam(\bY) \ =\ \bigcup_{\bu\in\bbN^k} \bgam_{\bu} + \bu \star \bY ,
\end{equation}
where 
$\bu\star\bY := u_1\star Y_1 +\cdots + u_k\star Y_k$.

\subsection{$\Sdom$ and $\Sdom_0$}
\begin{definition}\label{SetDom}
Let $\Sdom$ be the set of $\bgam(\bY)$ of the form \eqref{bGamma form}, and let
$\Sdom_0$ be the set of $\bgam(\bY)\in\Sdom$ which map $\Su(\bbP)^k$ into itself.
A system $\bY = \bgam(\bY)$ of set-equations  is {\em basic} 
if $\bgam(\bY) \in \Sdom_0$.
\end{definition}

%
\begin{lemma} \label{Gamma prop}
Suppose $\bgam(\bY)\in\Sdom$. Then
\begin{thlist}
\item
for $\bA \in \Su(\bbN)^k$ one has
$$
 \gam_i(\bA) \ =\ \bigcup_{\bu\in\bbN^k}\Big( \gam_{i,\bu} + \sum_{\{j : u_j>0\}} u_j\star A_j \Big) ,
 $$
 where the summation term is omitted in the case that all $u_j = 0$.
 \item
$\bA \in \Su(\bbP)^k$ and $0\in \bu\star \bA$ imply $\bu = \mathbf{0}$.
\item
$\bgam(\bY)\in\Sdom_0$ iff $\,\bgam_{\mathbf 0} \in \Su(\bbP)^k$.
 \end{thlist}
\end{lemma}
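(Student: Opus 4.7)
My plan is to prove (a), (b), (c) in order, with (c) as a direct corollary of the first two. All three parts are bookkeeping on the set operations, the key conventions being that $0\star B = \{0\}$ acts as the additive identity for $+$, while $u\star\O = \O$ for $u > 0$ annihilates sums. For (a), I would expand $\gam_i(\bA)$ directly from \eqref{Gamma form}, and for each fixed $\bu$ separate the indices $j$ with $u_j = 0$ from those with $u_j > 0$. The summands $0\star A_j = \{0\}$ can be dropped since $\{0\}$ is the identity for $+$, leaving $\sum_{\{j:u_j>0\}} u_j\star A_j$. When all $u_j = 0$ this sum is empty and only $\gam_{i,\mathbf{0}}$ survives, matching the ``summation omitted'' convention.

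For (b), I would argue by contradiction. Suppose $\bu \ne \mathbf{0}$ and pick some $j$ with $u_j > 0$. If $A_j = \O$ then $u_j\star A_j = \O$, forcing $\bu\star\bA = \O$ and contradicting $0\in\bu\star\bA$. Otherwise $A_j$ is a non-empty subset of $\bbP$ and $\min(u_j\star A_j)\ge u_j\cdot\min(A_j)\ge 1$. The remaining summands contribute either $\{0\}$ (when $u_i = 0$) or values in $\bbP$ (when $u_i > 0$ and $A_i\ne\O$; the empty case again kills the sum). Consequently every element of $\bu\star\bA$ is at least $1$, the desired contradiction, so $\bu = \mathbf{0}$.

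For (c), the forward direction is immediate: pick any $\bA\in\Su(\bbP)^k$, and observe that the $\bu = \mathbf{0}$ summand in the union from (a) gives $\gam_{i,\mathbf{0}}\subseteq\gam_i(\bA)\subseteq\bbP$. Conversely, assume $\gam_{i,\mathbf{0}}\subseteq\bbP$ for each $i$ and take any $\bA\in\Su(\bbP)^k$; in (a) the $\bu=\mathbf{0}$ summand is $\gam_{i,\mathbf{0}}\subseteq\bbP$, while for $\bu\ne\mathbf{0}$ the set $\bu\star\bA$ is either empty or contained in $\bbP$ by (b), so the summand $\gam_{i,\bu}+\bu\star\bA$ is contained in $\bbP$ in either case. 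Taking the union yields $\gam_i(\bA)\subseteq\bbP$ for each $i$, hence $\bgam(\bA)\in\Su(\bbP)^k$. The only step that requires real care is the careful handling of empty sets: the zero-vector case and the annihilation case $A_j=\O$ with $u_j>0$ must be distinguished, but once these conventions are pinned down there is no deeper obstacle.
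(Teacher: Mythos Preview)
Your proposal is correct and follows essentially the same approach as the paper. The only differences are stylistic: for (b) the paper runs a biconditional chain ($0\in\bu\star\bA \Leftrightarrow 0\in u_i\star A_i\text{ for all }i \Leftrightarrow u_i=0\text{ for all }i$) while you argue by contradiction, and for (c) the paper packages both directions into a single chain of equivalences terminating at $0\notin\gam_{i,\mathbf{0}}$, whereas you treat the two implications separately; the underlying content is identical.
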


\begin{proof}
(a) follows from the fact that $0\star A_j = 0$, by Definition \ref{set ops}.

Given $\bA \in \Su(\bbP)^k$,
 (b) follows from
 \begin{eqnarray*}
0\in \bu\star\bA
&\Leftrightarrow& 0\in u_i\star A_i \quad \text{for }1\le i\le k\\
&\Leftrightarrow& u_i = 0 \quad\text{for } 1\le i \le k,
\end{eqnarray*}
 the last assertion holding because $0\notin A_i$ for any $i$, and Definition \ref{set ops}.

For (c),
let $\bA\in\Su(\bbP)^k$. Then 
\begin{eqnarray*}
\bgam(\bA) \subseteq \Su(\bbP)^k
&\Leftrightarrow &
0\notin \Gamma_i(\bA)\quad\text{for } 1\le i \le k\\
&\Leftrightarrow &
0 \notin  \gam_{i,\bu} + \bu\star\bA
\quad\text{for } 1\le i \le k, \ \bu\in\bbN^k\\
&\Leftrightarrow &
0 \notin  \gam_{i,\bu} \cap \bu\star\bA
\quad\text{for } 1\le i \le k, \ \bu\in\bbN^k\\
&\Leftrightarrow &
\Big( 0 \in  \bu\star\bA \Rightarrow  0 \notin \gam_{i,\bu}\Big)
\quad\text{for } 1\le i \le k, \ \bu\in\bbN^k\\
&\Leftrightarrow &
0 \notin \gam_{i,\mathbf{0}}
\quad\text{for } 1\le i \le k,
\end{eqnarray*}
the last line by item (b).
\end{proof}

Define a partial ordering $\unlhd$ on $\Sdom$ by
$$
\bgam(Y)\ \unlhd\ \bDelta(Y)\ \text{ iff }\  
\Gamma_{i,\bu} \subseteq \Delta_{i,\bu}\quad \text{for all } i, \bu.
$$
$\bgam^{(n)}(\bY)$ denotes the $n$-fold composition of $\bgam(\bY)$ with itself, and 
$\gam_{\ i}^{(n)}(\bY)$ is the $i$th component of this composition. 
Let $\bgam^{(\infty)}(\bY) := \bigcup_{n\ge 0}\gam^{(n)}(\bY) $.
For $\bA,\bB\in\Su(\bbN)^k$ let,
\begin{itemize}
\item
$\min \bA := (\min A_1,\ldots,\min A_k)$
\item
$\bA \le \bB$ expresses $A_i \subseteq B_i$ for $1\le i\le k$
\item
$\cN(\bA) := \{i :A_i = \O\}$.
\end{itemize}

\begin{lemma} \label{catch-all}
Given $\bgam(\bY)\in\Sdom$, and $\bA,\bB\in \Su(\bbN)^k$, the 
following hold:
\begin{thlist}
\item
$
\bA  \le \bB \ \Rightarrow\ \bgam\big(\bA \big)\le \bgam\big(\bB \big)
$

\item
$
\cN(\bA) = \cN(\bB)\ \Rightarrow\ 
\cN\big(\bgam(\bA)\big) = \cN\big(\bgam(\bA)\big)
$
\item
$
\bA \le \bB \  \Rightarrow  \ \cN\big(\bgam(\bA)\big) \supseteq \cN\big(\bgam(\bB)\big)
$

\item
$\cN\big(\bgam^{(k)}(\bempty)\big) = \cN\big(\bgam^{(k+n)}(\bempty)\big)$ for $n\ge 0$.
\end{thlist}
\end{lemma}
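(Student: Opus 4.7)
The plan is to treat the four items in order, leveraging the explicit form $\gam_i(\bY) = \bigcup_{\bu} \gam_{i,\bu} + \bu\star \bY$ throughout; (c) falls straight out of (a), (b) is the one genuine observation to isolate, and (d) is a finite-descent argument that combines (b) and (c).

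For (a), I would simply invoke that each of $\cup$, $+$, and $\star$ is monotone in each argument: from $\bA \le \bB$ one gets $u_j\star A_j \subseteq u_j\star B_j$ for every $j$, hence $\bu\star\bA \subseteq \bu\star\bB$, hence $\gam_{i,\bu} + \bu\star\bA \subseteq \gam_{i,\bu} + \bu\star\bB$, and finally $\gam_i(\bA) \subseteq \gam_i(\bB)$ after taking the union over $\bu$. Part (c) is then a one-liner: $\bA \le \bB$ gives $\bgam(\bA) \le \bgam(\bB)$, so every $i$ with $\gam_i(\bB) = \O$ also has $\gam_i(\bA) = \O$, which is exactly $\cN(\bgam(\bA)) \supseteq \cN(\bgam(\bB))$.

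The substance of (b) is that whether $\gam_i(\bA) = \O$ depends only on $\cN(\bA)$, not on the contents of the non-empty components. The sublemma I would isolate first is
\[
\bu\star\bA \ =\ \O \quad \Leftrightarrow\quad \{j : u_j > 0\}\cap \cN(\bA)\ \neq\ \O,
\]
a direct consequence of the definitions: $0\star A_j = \{0\}$, while for $n\ge 1$ one has $n\star \O = \O$, and a sum of sets is empty iff at least one summand is. Combined with the fact that $\gam_{i,\bu} + \bu\star\bA = \O$ iff $\gam_{i,\bu} = \O$ or $\bu\star\bA = \O$, this shows that $\gam_i(\bA) = \O$ is characterized by a condition referring only to the fixed coefficient sets $\{\gam_{i,\bu}\}_\bu$ and the set $\cN(\bA)$. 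Replacing $\bA$ by any $\bB$ with $\cN(\bA) = \cN(\bB)$ does not alter the condition, proving (b).

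For (d), start with the trivial $\bempty \le \bgam(\bempty)$ and apply (a) inductively to obtain the ascending chain $\bgam^{(n)}(\bempty) \le \bgam^{(n+1)}(\bempty)$. Then (c) gives the weakly descending chain
\[
\cN(\bempty)\ \supseteq\ \cN(\bgam(\bempty))\ \supseteq\ \cN(\bgam^{(2)}(\bempty))\ \supseteq\ \cdots
\]
of subsets of the $k$-element set $\{1,\ldots,k\}$. Since the first term has size $k$ and each strict drop decreases size by at least one, at least one of the $k+1$ transitions among the indices $0,1,\ldots,k+1$ must fail to be strict, say $\cN(\bgam^{(m)}(\bempty)) = \cN(\bgam^{(m+1)}(\bempty))$ for some $m\le k$. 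By (b), equality of $\cN$ at two consecutive iterates propagates forever, so $\cN(\bgam^{(m+n)}(\bempty))$ is constant for all $n\ge 0$; since $m\le k$, this stable range covers $\bgam^{(k)}(\bempty)$ and everything beyond, giving the claim. The only step that requires any real care is the sublemma underlying (b), where I must keep straight the distinction between $\{0\}$ (the value of $0\star A_j$) and the empty set $\O$; the rest is pure monotonicity and pigeonhole, so I anticipate no real obstacles.
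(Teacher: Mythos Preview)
Your proposal is correct and follows essentially the same approach as the paper: monotonicity for (a), an explicit characterization of when $\gam_i(\bA)=\O$ in terms of $\cN(\bA)$ for (b), and the descending-chain-plus-stabilization argument for (d). The only cosmetic difference is that you derive (c) directly from (a), whereas the paper derives it from the characterization formula underlying (b); both are equally valid and equally short.
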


\begin{proof}
Item (a) follows from the montonicity of the set operations $\bigcup, + , \star$ used in the 
definition of the $\bgam(\bY)$ in $\Sdom$.

Next observe that
\begin{equation}\label{NGamma}
\cN\big(\bgam(\bA)\big)\ =\ \bigg\{ i : \big(\forall \bu \in \bbN^k\big) 
\Big(\gam_{i,\bu} = \O\ \text{or}\ \big(\exists j \big)\big(u_j>0 \text{ and } A_j = \O\big)\Big)\bigg\},
\end{equation}
since from \eqref{bGamma form}  one has
 $i\in \cN\big(\bgam(\bA)\big)$ iff for every $\bu\in\bbN^k$ one has
$\gam_{i,\bu} + \bu \star \bA = \O$, and this holds iff for every
$\bu\in\bbN^k$ one has either
$\gam_{i,\bu}=\O$, or for some $j$, $u_j \star A_j = \O$.
Note that $u_j \star A_j = \O$ holds iff $u_j > 0$ and $A_j = \O$.

Item (b) is immediate from \eqref{NGamma}.

To prove (c), note that $B_j = \O\Rightarrow A_j=\O$, and then use \eqref{NGamma}.

To prove (d), note that from $\bempty \le \bgam(\bempty)$ and (a) one has an increasing sequence
$$
\bempty \le \bgam(\bempty) \le \bgam^{(2)}(\bempty)\le \cdots.
$$
Then (c) gives the decreasing sequence
$$
\{1,\ldots,k\}\  =\  \cN(\bempty)\  \supseteq\  \cN\big(\bgam(\bempty)\big)\ \supseteq\ \cN\big(\bgam^{(2)}(\bempty)\big) \ \supseteq\ \cdots.
$$
From (b) one sees that once two consecutive members of this sequence are equal, 
then all members further along in the sequence are equal to them. 
This shows the sequence must stabilize by the term $\cN(\bgam^{(k)}(\bempty))$.
\end{proof}

For the next lemma, recall that $\min(\O) := +\infty$.

\begin{lemma} \label{min expr}
Suppose $\bgam \in \Sdom_0$, and suppose $\bA\subseteq \Su(\bbP)^k$ 
with $\bA \le \bgam(\bA)$.
Then
$$
\min{\bgam^{(\infty)}(\bA)}\ =\ \min{\bgam^{(k)}(\bA)}. 
$$
In particular,
$
\min{\bgam^{(\infty)}(\bempty)}\ =\ \min{\bgam^{(k)}(\bempty)}. 
$
\end{lemma}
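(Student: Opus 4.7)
The plan is to interpret each element of $\gam_i^{(n)}(\bA)$ as the value of a derivation tree, then apply pigeonhole to the coord labels along a root-to-leaf path to bound the size of a minimum-value tree by $k$.

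Define a derivation tree rooted at coord $i$ recursively as either (i) a leaf holding a value $a \in A_i$, or (ii) a root labeled with a rule $(\bu, c)$ where $c \in \gam_{i,\bu}$, together with $u_j$ subtrees of coord $j$ for each $j$ (each a derivation tree); the value $v(T)$ of a tree $T$ is the sum of all leaf-values and rule-constants. Using $\bA \le \bgam(\bA)$ together with Lemma~\ref{catch-all}\,(a), a straightforward induction on $n$ shows that $\gam_i^{(n)}(\bA)$ is exactly the set of values $v(T)$ where $T$ is a derivation tree at coord $i$ with at most $n$ rule-applications on every root-to-leaf path. Since $(\bgam^{(n)}(\bA))_n$ is monotone increasing, $\gam_i^{(\infty)}(\bA) = \bigcup_n \gam_i^{(n)}(\bA)$ consists of the values of all derivation trees at coord $i$.

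If $\gam_i^{(\infty)}(\bA) = \emptyset$, the argument for Lemma~\ref{catch-all}\,(d) applied to $\bA \le \bgam(\bA) \le \bgam^{(2)}(\bA) \le \cdots$ gives $i \in \cN(\bgam^{(k)}(\bA))$ and both sides equal $\infty$. Otherwise let $v^* = \min \gam_i^{(\infty)}(\bA)$ and pick a derivation tree $T^*$ at coord $i$ of value $v^*$ having the fewest total nodes. Suppose for contradiction some root-to-leaf path in $T^*$ has more than $k$ nodes; their coord labels lie in $\{1,\ldots,k\}$, so by pigeonhole two of these nodes share a coord $c$. Letting $T_1, T_2$ be the subtrees of $T^*$ rooted at the upper and lower occurrences respectively, both are derivation trees at coord $c$ and $T_2 \subsetneq T_1$. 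Replacing $T_1$ by $T_2$ inside $T^*$ produces a derivation tree $T'$ at coord $i$ of value $v^* - v(T_1) + v(T_2) \le v^*$ (non-negativity of the summands gives $v(T_2) \le v(T_1)$) and strictly fewer nodes, contradicting the minimality of $T^*$.

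Hence every root-to-leaf path of $T^*$ has at most $k$ nodes, so at most $k$ rule-applications, giving $v^* = v(T^*) \in \gam_i^{(k)}(\bA)$; this yields $\min \gam_i^{(k)}(\bA) \le v^*$, and the reverse inequality is immediate from $\gam_i^{(k)}(\bA) \subseteq \gam_i^{(\infty)}(\bA)$. Carrying this out for every coord proves $\min \bgam^{(\infty)}(\bA) = \min \bgam^{(k)}(\bA)$; the ``in particular'' clause follows since $\bempty \le \bgam(\bempty)$ trivially. The main obstacle is the tree correspondence of the second paragraph, whose verification must carefully accommodate both $\bA$-leaves and the rule applications with $\bu = \bzero$ that terminate a branch without producing children.
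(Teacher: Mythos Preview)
Your proof is correct and takes a genuinely different route from the paper's. The paper works directly with the coordinate minima $b_i^{(n)} := \min \gam_i^{(n)}(\bA)$: it shows that whenever $b_i^{(n+1)} < b_i^{(n)}$, the witnessing $\bu$ must involve some coordinate $r$ with $b_r^{(n)} < b_r^{(n-1)}$ and $b_i^{(n+1)} \ge b_r^{(n)}$, and then chains these ``drop'' indices backwards $k$ steps to force a repeated index by pigeonhole, yielding a contradiction with the non-increase of $b_r^{(\cdot)}$. Your argument instead materializes elements of $\gam_i^{(n)}(\bA)$ as values of derivation trees and performs a pumping-style surgery on a minimum-value, minimum-size witness: a root-to-leaf path longer than $k$ repeats a coordinate, and collapsing the upper subtree onto the lower one (legitimate because both are at the same coordinate) produces a strictly smaller tree of no larger value. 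Both proofs pivot on the same pigeonhole on $\{1,\ldots,k\}$, but yours localizes it to a single path in one witness tree, while the paper's tracks a sequence of coordinates across successive iterates.

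What each approach buys: your tree picture is very concrete and makes the $\Sdom_0$ hypothesis and the positivity of $\bA$ transparently serve their purpose (every node contributes a non-negative amount, so excising a segment cannot increase the value); it also exposes the result as a close cousin of pumping-lemma arguments. The paper's approach avoids setting up the tree semantics and the correspondence $V_i^{(n)} = \gam_i^{(n)}(\bA)$ you flag in your last paragraph --- that correspondence really does need the hypothesis $\bA \le \bgam(\bA)$ (to absorb the ``leaf'' case into $\gam_i^{(n)}$) and careful handling of childless rule nodes with $\bu = \bzero$, so it is good that you call this out explicitly. Your treatment of the empty case by re-running the argument behind Lemma~\ref{catch-all}(d) with $\bA$ in place of $\bempty$ is also fine, since that argument only uses $\bA \le \bgam(\bA)$ and parts~(a)--(c) of the same lemma.
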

\begin{proof}
From
$$
\bgam(\bY)\ :=\  \bigcup_{\bu\in\bbN^k} \bgam_{\bu} + \bu \star \bY
$$
one has, by Lemma \ref{Gamma prop}\,(a), for $1\le i \le k$,
\begin{equation*}   \label{Gam exp}
 \gam_i^{(n+1)}(\bA) \ =\  \bigcup_{\bu\in\bbN^k} \Big( \gam_{i,\bu} + \sum_{ \{ j : u_j > 0 \} } 
 u_j\star \gam_{\ j}^{(n)}(\bA) \Big). 
 \end{equation*}
Let
\begin{eqnarray*}
\bb_\bu  &:=& \min \bgam_\bu(\bA)\\
 \bb^{(n)} & :=& \min \bgam^{(n)}(\bA),
\end{eqnarray*}
that is, for $1\le i \le k$,
\begin{eqnarray*}
b_{i,\bu} &=& \min \gam_{i,\bu}(\bA)\\
 b_i^{(n)} &:=& \min \gam_{\ i}^{(n)}(\bA).
\end{eqnarray*}
Then for $n\ge 0$,
\begin{equation} \label{bdec}
\bb^{(n+1)}\le \bb^{(n)},
\end{equation} 
since 
$\bA \le \bgam(\bA)$ implies
$\bgam^{(n)}(\bA) \le \bgam^{(n+1)}(\bA)$, by repeated application of Lemma \ref{catch-all}\,(a).

From the above,
\begin{align}
b_i^{(n+1)}
&:=\  \min \gam_{\ i}^{(n+1)}(\bA)\nonumber \\
&=\  \min  \bigcup_{\bu\in\bbN^k} \Big( \gam_{i,\bu} + \sum_{ \{ j : u_j > 0 \} } u_j\star \gam_{\ j}^{(n)}(\bA) \Big) \quad\text{by }\eqref{Gamma form},\nonumber 
\intertext{so}
b_i^{(n+1)} &=\  \min\Big\{ b_{i,\bu} + \sum_{ \{ j : u_j > 0 \} } u_j b_j^{(n)} : \bu \in \bbN^k\Big\} . \label{bin expr}
\end{align}

For $n\ge 1$ let 
\begin{equation}
I_n\  :=\   \Big\{ j : b_j^{(n)} < b_j^{(n-1)} \Big\}.
\end{equation}

CLAIM:
\begin{equation*}\label{step down}
    (\forall n\ge 1)(\forall i\in I_{n+1})
     (\exists r\in I_{n})\Big( b_i^{(n+1)} \ge b_r^{(n)}\Big).
\end{equation*}
\begin{proof}[Proof of Claim]
Suppose $n\ge1$ and $i\in I_{n+1}$, that is, 
$$
b_i^{(n+1)} < b_i^{(n)}.
$$
From \eqref{bin expr}, let $\bu\in \bbN^k$ be such that
\begin{equation}\label{b loc}
b_i^{(n+1)} \ =\ b_{i,\bu} + \sum_{ \{ j : u_j > 0 \} } u_j b_j^{(n)} .
\end{equation}
Let $r \in \{1,\ldots,k\}$ be such that $u_r > 0$ and $r \in I_n$---such an $r$
must exist, 
for otherwise $u_j>0$ would imply $j\notin I_n$, that is, $b_j^{(n)} = b_j^{(n-1)}$.
 Then from \eqref{b loc}, and from \eqref{bin expr} with $n-1$ substituted for $n$,
 \begin{eqnarray*}
 b_i^{(n+1)}  
 &=& b_{i,\bu} + \sum_{\{j : u_j>0\}} u_j b_j^{(n-1)}
 \ \ge\ b_i^{(n)},
 \end{eqnarray*}
contradicting the assumption that $i\in I_n$, that is,
$b_i^{(n+1)} < b_i^{(n)}$.

For this choice of $\bu$ and $r$, \eqref{b loc} implies
\begin{equation}
b_i^{(n+1)}  \ \ge \ b_r^{(n)},
\end{equation}
 establishing the Claim.
\end{proof}

Now suppose $I_{n}\neq \O$ for some $n\ge k+1$. 
Then, by the Claim,  one can choose a sequence $i_n,\ldots,i_{n-k}$ of indices 
from $\{1,\ldots,k\}$
such that 
\begin{eqnarray}\label{dec seq}
b_{i_n}^{(n)} \ge b_{i_{n-1}}^{(n-1)} \ge \cdots \ge b_{i_{n-k}}^{(n-k)}
\end{eqnarray}
and $i_j \in I_j$ for $n-k \le j \le n$.
By the pigeonhole principle there are two  $j$ such that the
indices $i_j$ are the same, say $r = i_p = i_q$, where $n-k\le p < q \le n$. 
Then
 $b_r^{(q)} \ge b_r^{(p)}$ by \eqref{dec seq}.  
 But from $r\in I_q$ and \eqref{bdec} one has $b_r^{(q)} < b_r^{(q-1)}\le \cdots \le b_r^{(p)}$,
 giving a contradiction.
 Thus $I_{n}=\O$ for $n>k$, completing the proof of the lemma.
   \end{proof}

\subsection{The Minimum Solution of $\bY = \bgam(\bY)$}

\begin{proposition}\label{min prop}
For $\,\bgam(\bY)\in \Sdom$, the system of set-equations $ \bY = \bgam(\bY)$ 
has a minimum solution $\bS$, and it is given by
\begin{equation*}\label{smallest sol}
\bS \ =\ \bgam^{(\infty)}(\bempty)\ :=\ \bigcup_{n\ge 0} \bgam^{(n)}(\bempty).
\end{equation*}
If $\,\bgam(\bY)\in \Sdom_0$ then, for $1\le i \le k$, one has $S_i = \O\ $ iff  $\ \gam_{\ i}^{(k)}(\bempty) = \O$.
\end{proposition}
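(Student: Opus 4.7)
The plan is to apply a Kleene-style least fixed point argument on the complete lattice $(\Su(\bbN)^k,\le)$ with bottom element $\bempty$. Monotonicity of $\bgam$ is already provided by Lemma \ref{catch-all}\,(a), so the real work is to verify $\omega$-continuity: for every ascending chain $\bA^{(0)}\le \bA^{(1)}\le\cdots$ in $\Su(\bbN)^k$,
\[
\bgam\Big(\bigcup_n \bA^{(n)}\Big)\ =\ \bigcup_n \bgam(\bA^{(n)}).
\]
Using the explicit form \eqref{bGamma form}, this reduces to checking that each of the operations $\bigcup$, $+$, $\star$ appearing in the definition of $\bgam$ commutes with ascending unions of subsets of $\bbN$, which is immediate from Definition \ref{set ops}, and that the outer union over $\bu\in\bbN^k$ in \eqref{bGamma form} may be freely swapped with the chain union.

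Once continuity is in hand, set $\bS := \bigcup_{n\ge 0}\bgam^{(n)}(\bempty)$. From the trivial inequality $\bempty\le\bgam(\bempty)$ and repeated application of Lemma \ref{catch-all}\,(a), the iterates $\bgam^{(n)}(\bempty)$ form an ascending chain, so continuity yields
\[
\bgam(\bS)\ =\ \bigcup_{n\ge 0}\bgam^{(n+1)}(\bempty)\ =\ \bS.
\]
To see $\bS$ is the minimum solution, let $\bT=\bgam(\bT)$ be any solution: an easy induction on $n$, with base case $\bempty\le \bT$ and inductive step $\bgam^{(n+1)}(\bempty)=\bgam(\bgam^{(n)}(\bempty))\le\bgam(\bT)=\bT$ via Lemma \ref{catch-all}\,(a), gives $\bgam^{(n)}(\bempty)\le \bT$ for every $n$, hence $\bS\le \bT$.

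For the final assertion, fix $\bgam\in\Sdom_0$ and $i\in\{1,\ldots,k\}$. Since $\gam_i^{(k)}(\bempty)\subseteq S_i$, one direction is immediate. For the converse, suppose $\gam_i^{(k)}(\bempty)=\O$, that is, $i\in\cN\big(\bgam^{(k)}(\bempty)\big)$. Then Lemma \ref{catch-all}\,(d) gives $i\in\cN\big(\bgam^{(k+n)}(\bempty)\big)$ for all $n\ge 0$, and monotonicity of the ascending sequence forces $\gam_i^{(n)}(\bempty)=\O$ for $0\le n<k$ as well; hence every term in the union defining $S_i$ is empty, so $S_i=\O$. The whole argument is essentially routine; the only step requiring real care is $\omega$-continuity, because of the potentially infinite outer union over $\bu\in\bbN^k$ in \eqref{bGamma form}, so one must verify that this union can be interchanged with the chain union without subtlety.
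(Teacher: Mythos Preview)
Your argument is correct and matches the paper's approach: the paper's element-chasing for the two inclusions $\bgam^{(\infty)}(\bempty)\le\bgam\big(\bgam^{(\infty)}(\bempty)\big)$ and its reverse is exactly the verification of $\omega$-continuity you outline (a single element of $\gam_i(\bA)$ lies in $\gam_{i,\bu}+\bu\star\bA$ for one fixed $\bu$, hence uses only finitely many elements of the $A_j$), and minimality is proved identically. The only divergence is in the last sentence: the paper cites Lemma~\ref{min expr} (on minima), whereas you argue directly from Lemma~\ref{catch-all}\,(d); your route is a bit more self-contained and, incidentally, does not actually require the hypothesis $\bgam\in\Sdom_0$.
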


\begin{proof}
The sequence of sets $\bgam^{(n)}(\bempty)$ is non-decreasing 
by Lemma \ref{catch-all}\,(a) since 
$\bempty \subseteq \bgam(\bempty)$.
Suppose $a \in \gam_{\ i}^{(\infty)}(\bempty)$. Then, for some $n\ge 1$,
\begin{equation}
a \in \gam_{\ i}^{(n)}(\bempty) \ =\  \gam_i\big(\bgam^{(n-1)}(\bempty)\big) 
\ \subseteq\  \gam_i\big(\bgam^{(\infty)}(\bempty)\big).
\end{equation}
This implies $\bgam^{(\infty)}(\bempty) \le \bgam\big(\bgam^{(\infty)}(\bempty) \big)$.

Conversely, suppose $a \in \gam_i\big(\bgam^{(\infty)}(\bempty) \big)$.
Then for some $\bu\in \bbN^k$, 
\begin{equation}
a \in \gam_{i,\bu} +   \bu\star\bgam^{(\infty)}(\bempty),
\end{equation}
which in turn implies  for some $\bu\in \bbN^k$ and $n\ge 1$,
\begin{equation}
a \in \gam_{i,\bu} +   \bu\star\bgam^{(n)}(\bempty) \  
\subseteq\   \bgam^{(n+1)}(\bempty)\   \subseteq\   \bgam^{(\infty)}(\bempty).
\end{equation}
Thus
$\bgam^{(\infty)}(\bempty) = \bgam\big(\bgam^{(\infty)}(\bempty)\big)$, so 
$\bgam^{(\infty)}(\bempty)$ is indeed a solution to $ \bY = \bgam(\bY)$.

Now, given any solution $\bT$, from $\bempty \le \bT$ and Lemma \ref{catch-all}\,(a) it follows
that for $n\ge 0$, $\bgam^{(n)}(\bempty) \subseteq \bgam^{(n)}(\bT) = \bT$, and
thus $\bgam^{(\infty)}(\bempty) \subseteq \bT$, showing that $\bgam^{(\infty)}(\bempty)$
is the smallest solution to $ \bY = \bgam(\bY)$.

The test for $S_i = \O$ is immediate from Lemma \ref{min expr}.

\end{proof}

%
\subsection{The Dependency Digraph for $\bY = \bgam(\bY)$}

In the study of systems $\bY = \bgam(\bY)$ with $\bgam(\bY)\in\Sdom$, 
it is important to know when 
$Y_i$ depends on $Y_j$.
This information is succinctly collected in the dependency digraph of the system.

\begin{definition}
 The {\em dependency digraph} $D$ of a system $\bY = \bgam(\bY)$
$($with $k$ equations\,$)$ has vertices $1,\ldots,k$ 
 and directed edges given
 by $i \rightarrow j$ iff  there is a $\bu\in\bbN^k$ such that $\gam_{i,\bu} \neq\O$
 and $u_j>0$. 
 
 The {\em dependency matrix} $M$ of the system is the matrix of the digraph $D$.
 
  If $i\rightarrow j \in D$
 then we say ``\,$i$ depends on $j$'', as well as ``\,$Y_i$ depends on $Y_j$''.
The transitive closure of $\rightarrow$ is $\rightarrow^+ $; the notation $i\rightarrow^+ j$ 
is read: ``\,$i$ eventually depends on $j$''. It asserts that there is a directed
 path in $D$ from $i$ to $j$. 
In this case one also says ``\,$Y_i$ eventually depends on $Y_j$''.
The reflexive and transitive closure of $\rightarrow$  is  $\raStar $.

For each vertex $i$ let $[i]$ denote the $($possibly empty$)$ {\em strong component} of $i$
in the dependency digraph, that is, 
$$
[i] \  :=  \  \big\{ j : i\rightarrow^+   j\rightarrow^+i \big\}.
$$
 \end{definition}

For a given system $\bY = \bgam(\bY)$, the following are 
easily seen to be equivalent:
\begin{thlist}
\item
$i\rightarrow^+ j$

\item
there is an $n \in \{1,\ldots,k\}$ such that $(M^n)_{i,j}=1$.

\item
the $(i,j)$ entry of $M +\cdots+M^n$ is not $0$.
\end{thlist}

%
\subsection{The Main Theorem on Set Equations}

Recall that $\bu\star \bY$ means 
$u_1\star Y_1 + \cdots + u_k\star Y_k$; 
 and
$\bgam_\bu + \bu\star \bY$ 
is the $k$-tuple obtained by adding $\bu\star\bY$ to each
component of $\bgam_\bu$.

It is well-known that $\big(\Su(\bbN)^k,d\big)$ is a complete metric space, where
$$
d(\bA,\bB)\  :=\  
\begin{cases}
2^{-\min\,\bigcup_{i=1}^k\big(A_i \triangle B_i\big)} &\text{if } \bA\neq \bB\\
0&\text{if }\bA = \bB.
\end{cases}
$$
 In this space $\displaystyle \lim_{n\rightarrow \infty} d(A_n,B_n) =0$ iff  for 
 every $m$ there is an $N$ such that
$A_n\big|_{\le m} = B_n\big|_{\le m}$ for $n\ge N$.

When the minimum solution $\bS$ of a basic system $\bY=\bgam(\bY)$
 is meant to give spectra $S_i$ of generating functions, 
then 0 is excluded from the $S_i$, so one has the condition
$0 \notin \gam_{i,\bzero}, 1\le i\le k.$
Also
one can assume that trivial equations $Y_i = Y_j$ have, 
after suitable substitutions
into the other equations,  been set aside. Thus one
can assume there are no terms $\gam_{i,\bu} +   \bu\star\bY$ 
which are simply a variable $Y_j$.
Both restrictions on $\bGamma(\bY)$ are captured in the definition of {\em elementary\,} systems
   of set-equations.
   
   \begin{definition} A basic system $\bY = \bgam(\bY)$ of set-equations
    is an {\em elementary}
   system if it satisfies
$$
  0\in \gam_{i,\bu}
   \ \Rightarrow\  \sum_{j=1}^k u_j  \ge 2\quad
   \text{ for }1\le i\le k, \bu\in\Su(\bbN)^k.
$$
If it also satisfies $\cN\big(\bGamma^{(k)}(\bempty)\big) = \O$, that is,
no coordinate of $\bGamma^{(k)}(\bempty)$ is the empty set, then 
one has a {\em reduced} elementary system.
\end{definition}

If $\bY = \bgam(\bY)$ is a non-reduced elementary system, then a simple
process of {\em reduction} allows one to eliminate the $Y_i$ for which 
$i\in\cN\big(\bGamma^{(k)}(\bempty)$, namely by
substituting $\O$ for all occurrences of such $Y_i$ in $\bgam(\bY)$, and
removing the equations with such $Y_i$ on the left side. The resulting
system will be reduced elementary.

\begin{theorem}\label{ThmSetEqn}
Let
 $\bY = \bgam(\bY)$ be an elementary system of $k$ set-equations.
Then the following hold:
\begin{thlist}

\item
There is a unique solution $\bT \in \Su(\bbP)^k$, and it is given by
$$
\bT \ =\ \bgam^{(\infty)}(\bA)\ :=\ 
\lim_{n\rightarrow \infty}\bgam^{(n)}(\bA), \ \text{for any } \bA\in\Su(\bbP)^k.
$$

\item
$T_i= \O\ $ iff  $\ \gam_{\ i}^{(k)}(\bempty) = \O$, that is, $i\in\cN\big(\bGamma^{(k)}(\bempty))$.

\end{thlist}
For the remaining items, we assume the system is reduced.
\begin{thlist}
\item [c]
 $[i] \neq\O$ implies $T_i$ is periodic. If also there is a $j\in[i]$ such that
 for some $\bu\in\bbN^k$ one has
 $\gam_{j,\bu}\neq \O$ and $\sum\{u_\ell : \ell \in [i]\} \ge 2$, then $T_i$ is the union of
 a finite set with a single arithmetical progression.

\item[d]
 Suppose $[i] = \O$ and the $i$th equation can be written in the form
 $$
 Y_i\ :=\ P_i \  +\ \bigcup_{Q \in \fQ_i} \sum_{j=1}^k Q_j\star Y_j,
 $$
 with $P_i$ [eventually] periodic, and with  $\fQ_i$ a finite set of 
 $k$-tuples $Q = (Q_1,\ldots,Q_k)$ of 
 [eventually] periodic subsets $Q_j$ of $\bbN$, and 
 for $i \rightarrow j$ one has $T_j$ being [eventually] periodic.
 Then
 $T_i$ is [eventually] periodic.
%
\item[e]
The periodicity parameters $\bfm,\bfq$ of the solution $\bT$ can be found from 
$\bgam^{(k)}(\bempty) $ and
the 
$\bGamma_\bu$ via the formulas:
\begin{eqnarray}
 	\fm_i :=  \fm_i(T_i) 
		&=& \min\Big(\gam_{\ i}^{(k)}(\bempty)\Big)  \label{m formula}\\
	\fq_i := \fq(T_i) 
		&=&\gcd \bigg(\bigcup_{i\raStar  j}
 		\bigcup_{\bu\in\bbN^k} \Big(\gam_{j,\bu} + 
		\bu\star\bfm  -\fm_j \Big) \bigg)  .              \label{q formula}
 \end{eqnarray}

 \item[f]
$\fq_i\, |\, \fq_j$ whenever $i\rightarrow j$.
\end{thlist}
\end{theorem}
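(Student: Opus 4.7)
\emph{Parts (a) and (b).} The plan is to equip $\Su(\bbP)^k$, a closed subspace of the complete metric space $\big(\Su(\bbN)^k, d\big)$, with $d$, and to show that $\bgam$ is a strict contraction of factor $\tfrac12$. For $\bA,\bB \in \Su(\bbP)^k$ agreeing on $\{0,\ldots,m\}$ componentwise, I want $\bgam(\bA)$ and $\bgam(\bB)$ to agree on $\{0,\ldots,m+1\}$. A value $c\le m+1$ coming from $\gam_{i,\bu} + \bu\star\bA$ splits into three cases: if $\bu=\bzero$ the term is independent of $\bA$; if $\sum_j u_j = 1$, the elementary condition forces $\min\gam_{i,\bu}\ge 1$, so the single $A_j$-value used is $\le m$; if $\sum_j u_j\ge 2$, each of the $\ge 2$ positive $A_j$-summands is at most $m+1-(\sum u_j-1)\le m$. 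Banach's fixed-point theorem then yields the unique fixed point $\bT$ in $\Su(\bbP)^k$ and the convergence claim from any starting $\bA$. Iterating from $\bempty$ recovers the ascending union $\bgam^{(\infty)}(\bempty)$ of Proposition \ref{min prop}, so $\bT$ coincides with the minimum solution, and (b) is immediate from the second clause of that proposition combined with Lemma \ref{min expr}.

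\emph{Part (c).} Pick a cycle $i=j_0\to j_1\to\cdots\to j_r=i$ in the dependency digraph with witnesses $\bu^{(s)}$ satisfying $u^{(s)}_{j_{s+1}}>0$ and $\gam_{j_s,\bu^{(s)}}\neq\O$. Unwinding $T_{j_s}=\gam_{j_s}(\bT)$ along the cycle and selecting minima produces $T_i\supseteq C+T_i$ with $C=\sum_s c_s$, where each $c_s$ collects $\min\gam_{j_s,\bu^{(s)}}$, $(u^{(s)}_{j_{s+1}}-1)\fm_{j_{s+1}}$, and the contributions $u^{(s)}_\ell \fm_\ell$ for the other active coordinates. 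The elementary condition forces $c_s\ge 1$ in every case (either $\bu^{(s)}=\be_{j_{s+1}}$ so $\min\gam_{j_s,\bu^{(s)}}\ge 1$, or some extra $u^{(s)}_\ell>0$ contributes a positive $\fm_\ell$). Hence $T_i$ is periodic. For the stronger conclusion, use the hypothesized pair $(j,\bu)$ with $\sum_{\ell\in[i]} u_\ell \ge 2$: chain paths $i\raStar j$ and $\ell\raStar i$ for each $\ell\in[i]$ with $u_\ell>0$ (these exist by definition of $[i]$) to convert $T_j\supseteq \gam_{j,\bu}+\bu\star\bT$ into $T_i\supseteq R + s\star T_i$ with $s:=\sum_{\ell\in[i]} u_\ell \ge 2$, and invoke Lemma \ref{dbl lem}.

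\emph{Parts (d), (f), and the $\fm$-part of (e).} Part (d) is a direct application of Lemma \ref{ground case}: $Q_j\star T_j$ is (eventually) periodic because whenever $Q_j$ is nontrivial one has $i\to j$ and hence $T_j$ is (eventually) periodic by hypothesis; the finite sum over $j$, the finite union over $\fQ_i$, and the addition of $P_i$ all preserve (eventual) periodicity. The $\fm$-identity in (e) is immediate from $\bT=\bgam^{(\infty)}(\bempty)$ and Lemma \ref{min expr}. Part (f) drops out of the (e)-formula for $\fq$: $i\to j$ implies $\{\ell : j\raStar\ell\}\subseteq\{\ell : i\raStar\ell\}$, so the gcd defining $\fq_i$ is taken over a superset of the terms defining $\fq_j$, giving $\fq_i\mid \fq_j$.

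\emph{The $\fq$-formula in (e) (main obstacle).} Expanding $T_i-\fm_i=\bigcup_\bu(\gam_{i,\bu}+\bu\star\bT-\fm_i)$ and applying Proposition \ref{Karen's Table} summand-by-summand---in particular the star identity $\fq(u_j\star T_j)=\fq_j$ for $u_j>0$---produces the local recursion $\fq_i=\gcd\big(G_i, \{\fq_j : i\to j\}\big)$, where $G_i:=\gcd\big(\bigcup_\bu(\gam_{i,\bu}+\bu\star\bfm-\fm_j)\big)$ (with $j=i$ here). The hardest step is then to solve this recursion in closed form as $H_i:=\gcd(G_j : i\raStar j)$. My plan is to verify that both $\fq$ and $H$ are constant on each strong component (since $i\raStar j\raStar i$ makes their defining index sets coincide), pass to the DAG of strong components, and conclude by topological induction, exploiting the decomposition $\{j : i\raStar j\}=\{i\}\cup\bigcup_{\ell:i\to\ell}\{j:\ell\raStar j\}$. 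This circular mutual recursion inside each strong component is the technical heart of the argument and is not resolved by Proposition \ref{Karen's Table} alone; the clean bookkeeping of the star-identity and the strong-component collapse together form the load-bearing part of the proof.
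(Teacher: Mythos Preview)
Your treatment of (a)--(d), the $\fm$-formula in (e), and the derivation of (f) from the $\fq$-formula are all fine and essentially match the paper's argument (the paper phrases (a) as a one-line invocation of the contraction property, and derives (f) as a byproduct of the (e) computation rather than from the finished formula, but these are cosmetic differences).

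The gap is in the $\fq$-formula itself. Your local recursion
\[
\fq_i \;=\; \gcd\bigl(G_i,\ \{\fq_j : i\to j\}\bigr)
\]
is correctly derived, and it does immediately give one half of what you need: $\fq_i \mid G_j$ for every $j$ with $i\raStar j$, hence $\fq_i \mid H_i$. But the topological induction on the DAG of strong components does \emph{not} give the reverse divisibility $H_i \mid \fq_i$. Inside a nontrivial strong component $C$ the recursion collapses to
\[
q_C \;=\; \gcd\bigl(\{G_j : j\in C\},\ q_C,\ \{H_\ell : \ell\text{ below }C\}\bigr),
\]
which is satisfied by \emph{any} divisor of $\gcd(\{G_j:j\in C\},\{H_\ell\})=H_C$, not just by $H_C$ itself. (Concretely: for the single equation $Y_1=\{4\}\cup(\{2\}+2\star Y_1)$ one has $G_1=6$, and the recursion $\fq_1=\gcd(6,\fq_1)$ is solved by $1,2,3,6$ alike.) So the ``strong-component collapse plus DAG induction'' you identify as the load-bearing step does not, on its own, pin down $\fq_i$.

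What is actually needed for $H_i\mid\fq_i$ is a direct argument that $H_i$ divides every element of $T_j-\fm_j$ whenever $i\raStar j$. The paper does this by rewriting the system in the shifted variables $S_j:=T_j-\fm_j$, with coefficients $R_{j,\bu}:=\gam_{j,\bu}+\bu\star\bfm-\fm_j$, and then proving by induction on $n$ that $H_i$ divides every element of the $n$-th iterate $\Omega_j^{(n)}(\bempty)$ of the shifted operator, for all $j$ with $i\raStar j$. The inductive step uses exactly that $i\raStar j$ and $R_{j,\bu}\neq\O$ with $u_m>0$ force $i\raStar m$, so the inductive hypothesis applies to every $\Omega_m^{(n)}$ appearing in $\Omega_j^{(n+1)}$. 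Passing to the limit gives $H_i\mid S_i$ and hence $H_i\mid\fq_i$. This induction on the iterates, rather than on the dependency DAG, is what breaks the circularity inside strong components.
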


 \begin{proof}
 The mapping 
 $\bgam : \Su(\bbN)^k \rightarrow \Su(\bbN)^k$ is a contraction
map on the complete metric space $\big(\Su(\bbN)^k,d\big)$, proving (a).
Item (b) follows from Proposition \ref{min prop}.
 
 Now we are assuming that the system is reduced. 
 For (c), first note that given $i$ and  $\bu$ such that $\gam_{i,\bu}\neq \O$,  
there is a $q\ge 0$ (any $q\in\gam_{i,\bu}$) such that
\begin{equation*}\label{sec}
 T_i\ \supseteq\ 
 q+ \sum_{\substack{1\le j\le k\\u_j\neq 0}}u_j\star T_j .
\end{equation*}
From this, 
 $i\rightarrow j$ implies $T_i\supseteq p + T_j$ for some positive $p$, hence
\begin{equation*}\label{secp}
i\rightarrow^+ j\quad\text{implies}\quad T_i \supseteq p+ T_j\quad\text{for some positive } p.
\end{equation*}
 Now suppose $[i]\neq\O$.  
 Then  $i\rightarrow^+i$, so $T_i \supseteq p + T_i$ for some positive $p$, 
 that is, $T_i$ is periodic.

For the second part of (c), one can assume  that  $\rightarrow $ equals $\rightarrow^+$ 
(by using $\bgam \cup \cdots \cup \bgam^{(k)}$ in place of $\bgam$). 
From $i\rightarrow j$ follows $T_i \supseteq p_1 + T_j$ for some positive $p_1$. 
The hypothesis of (c) gives
$T_j \supseteq p_2 + T_a + T_b$ for some $a,b$ (possibly equal) and some $p_2\ge 0$. 
Finally $a\rightarrow i$ and $b\rightarrow i$
show that $T_a \supseteq p_3 + T_i$ and $T_b \supseteq p_4 + T_i$ for positive $p_3,p_4$. 
With $p=p_1+p_2+p_3+p_4$ one has 
$T_i \supseteq p + 2\star T_i$.
Then Lemma \ref{dbl lem} gives the desired conclusion.
 For (d), just apply Lemma \ref{ground case}.

Now to prove (e) and (f).
 The expression \eqref{m formula} for $\fm_i$ is given in Lemma \ref{min expr},
  so it remains to derive
 the formula \eqref{q formula} for $\fq_i$.
$\bT$ is the unique solution to the system, so
\begin{eqnarray*}\label{Eq2}
	  \bT&=& \bigcup_{\bu\in\bbN^k}  \Big( \bgam_\bu + \bu\star \bT \Big). 
\end{eqnarray*}
 Letting $\bS = \bT - \bfm$, one has $0$ in each $S_i$ and
\begin{eqnarray*}\label{Eq3}
	 \bS &=& \bigcup_{\bu\in\bbN^k}   
	 \Big( \bgam_\bu  + \bu\star \bfm -\bfm + \bu\star\bS \Big),
\end{eqnarray*}
or in terms of the individual components $S_j$ one has,  
$$
 	S_j \   =  \  \bigcup_{\bu\in\bbN^k}   \bigg( \gam_{j,\bu} + 
	\Big(\sum_{\ell=1}^k u_\ell \fm_\ell \Big) - \fm_j  +
 	\sum_{\ell=1}^k u_\ell \star S_\ell \bigg).
$$
For $1\le j\le k$ let
$$
R_{j,\bu} \ :=  \  \bigcup_{\bu\in\bbN^k} 
\bigg(\gam_{j,\bu} + \Big( \sum_{\ell=1}^k u_\ell\fm_\ell \Big) - \fm_j  \bigg)\,,
$$
so 
\begin{eqnarray}\label{Eq3a}
S_j\   =  \  \bigcup_{\bu\in\bbN^k} \Big( R_{j,\bu} + \sum_{\ell=1}^k u_\ell \star S_\ell\Big).
\end{eqnarray}
Since $0\in u_\ell\star S_\ell$ for all $\ell$, one has for $1\le j \le k$ and $\bu\in\bbN^k$, 
\begin{eqnarray}\label{Eq3A}
 	S_j &\supseteq& R_{j,\bu}.
\end{eqnarray}
By definition, $q_i = \gcd(S_i)$, so \eqref{Eq3A} implies
\begin{equation}\label{Eq3b}
	\fq_i \,\Big|\,  R_{i,\bu}.
\end{equation}
For $i\rightarrow j$ there is a $\bu\in\bbN^k$ such that
$\gam_{i,\bu}\neq\O$. 
Then 
\eqref{Eq3a} and  \eqref{Eq3b} imply that 
\begin{equation}\label{Eq3c}
	\fq_i\,\big | \,S_j\  \text{whenever } i\rightarrow j ,
\end{equation}
since $\fq_i \big | \, S_i$, and since whenever $\gam_{i,\bu}\neq \O$ one has 
$
S_i   \supseteq R_{i,\bu} + S_j.
$
This proves item (f) of the theorem. 
From \eqref{Eq3a} and \eqref{Eq3c} 
\begin{equation}\label{Eq3d}
	i\rightarrow^+  j \  \Rightarrow\ 
	\fq_i \,\Big|\,  R_{j,\bu}.
\end{equation}
From \eqref{Eq3b} and \eqref{Eq3d} 
\begin{equation}\label{Eq3e}
	\fq_i \,\Big|\, \fq_i^\star \ :=  \ 
		\gcd\bigg(\bigcup_{i\raStar  j  } R_{j,\bu}\bigg).
\end{equation}
To show $\fq_i^\star \,\big|\, \fq_i$, 
from
$
    \bT   =   \bgam^{(\infty)}(\bempty),
$
 one has 
 $
    \bS  =   \bOmega^{(\infty)}(\bempty),
$
 where
 \[
   \bOmega :\  \bA \mapsto\   \bigcup_{\bu\in\bbN^k} 
   \bgam_\bu -\bfm + \bu\star \bfm  + \bu\star \bA\   =  \  \bR_\bu + \bu\star \bA.
 \]
One proves, by induction on $n$, that  
 \begin{equation*}\label{div ind}
 i\raStar  j
  \  \Rightarrow\   
 \fq_i^\star\,\Big|\,\Omega_{\ j}^{(n)}(\bempty).
 \end{equation*}
  
 {\bf Ground Case:} (n=1)\\
   $
  \Omega_j(\bempty)   =  R_{j,\bu}
   $
 so $\fq_i^\star \,\Big|\,   \Omega_j(\bempty)$ if
 $i\raStar  j$, by the definition of $\fq_i^\star$ in \eqref{Eq3e}.
 
 {\bf Induction Step:}\\
 Assume that $\fq_i^\star \,\Big|\, \Omega_{\ j}^{(n)}(\bempty)$ if
$i\raStar  j$. One has
 $$
\Omega_{\ j}^{(n+1)}(\bempty) \  =  \ 
 \bigcup_{\bu\in\bbN^k} \Big(R_{j,\bu} + 
   \sum_{\ell=1}^k u_\ell \star \Omega_{\ \ell}^{(n)}(\bempty)\Big).
 $$
Suppose that  $i\raStar  j$. 
Then $q_i^\star\,\big|\,R_{j,\bu}$ (by  the definition of $q_i^\star$ in \eqref{Eq3e}).
For $\bu\in\bbN^k$, clearly 
\begin{equation}\label{q0}
   R_{j,\bu}\   =  \  \O\  \Rightarrow\  \fq_i^\star \,\Big|\,\Big(R_{j,\bu} + 
   \sum_{\ell=1}^k u_\ell \star \Omega_{\ \ell}^{(n)}(\bempty)\Big)\   =  \  \O. 
\end{equation}
If $\bu\in\bbN^k$ is such that $R_{j,\bu}\neq \O$, let $u_m>0$. 
Then $j\rightarrow m $, and since $i\raStar  j$, 
one has $i\raStar   m$.
By the induction hypothesis this implies $\fq_i^\star \,\Big|\, \Omega_{\ m}^{(n)}(\bempty)$. 
Consequently $\displaystyle \fq_i^\star \,\Big|\, \sum_{\ell=1}^k 
u_\ell \star \Omega_{\ \ell}^{(n)}(\bempty)$,
and one knows $q_i^\star\,\big|\,R_{j,\bu}$.
Thus
\begin{equation}\label{q1}
   R_{j,\bu}\  \neq\  \O\  \Rightarrow\   \fq_i^\star \, \Big|\,\Big(R_{j,\bu} + 
   \sum_{\ell=1}^k u_\ell \star \Omega_{\ \ell}^{(n)}(\bempty)\Big) .
\end{equation}
Items \eqref{q0} and \eqref{q1} show that for $\bu\in\bbN^k$,
$$
 i\raStar  j\  \Rightarrow\  \fq_i^\star \,\Big|\,\Big(R_{j,\bu} + 
   \sum_{\ell=1}^k u_\ell \star \Omega_{\ \ell}^{(n)}(\bempty)\Big), 
$$
so
$$
 i\raStar  j\  \Rightarrow\   \fq_i^\star \, \Big|\, 
\Omega_{\ \ell}^{(n+1)}(\bempty)\   =  \ 
  \bigcup_{\bu\in\bbN^k} \Big(R_{j,\bu} + 
   \sum_{\ell=1}^k u_\ell \star \Omega_{\ \ell}^{(n)}(\bempty)\Big)
$$
finishing the induction proof.
Thus 
$$
    i\raStar  j\  \Rightarrow\   \fq_i^\star \, \Big|\, 
 \Omega_{\ j}^{(\infty)}(\bempty) \   =  \ S_j\,.
 $$
In particular, 
$
   \fq_i^\star\,\big|\, S_i
$
so
$
    \fq_i^\star\,\big|\, \fq_i    =    \gcd(S_i),
$
completing the proof.
\end{proof}
 
%
\section{Elementary Power Series Systems}
%
%
\subsection{General Background for Power Series Systems}
Recall that
$\bbR$ is the set of reals, $\bbN$ the set of non-negative integers, and 
$\bbP$ the set of positive integers. The following table gives the notations 
needed for this section:
%
{\renewcommand{\arraystretch}{1.3}
$$
\begin{array}{|l c l|}
\hline
\bz &=& z_1,\ldots,z_m\\
\bbF&=&\text{a field}\\
\bbF[[\bz]]&=&\text{set of power series }A(\bz) = \sum_\bu a_\bu \bz^\bu\text{  over $\bbF$}\\
\bbF[[\bz]]^k&=&\{(A_1(\bz),\ldots,A_k(\bz)) : A_i(\bz) \in \bbF[[\bz]] \}\\
\bbF[[\bz]]_0&=&\{A(\bz) \in \bbF[[\bz]] : A(\bzero) = 0\}\\
\left[ x^{\le m}\right] A(x)&=&a(0) +a(1)x + \cdots + a(m)x^m\\
J_\bG(x,\by)&=&\text{the Jacobian matrix of }\bG(x,\by) \text{ with respect to $\by$}\\
\Spec(T(x))&=& \{n\ge 0 : t(n)\neq 0\}, \quad\text{for }T(x)\in\bbF[[x]]\\
\Spec(\bT(x))&=&\big(\Spec(T_1(x)),\ldots,\Spec(T_m(x))\big),\quad\text{for } \bT(x)\in\bbF[[x]]^m\\
\hline
\multicolumn{3}{|c|} 
{
\text{\vspace*{0.5em}The following items assume } \bbF = \bbR, 
\text{ the field of real numbers\vspace*{0.5em}}
}\\
\hline
A(\bz) \unrhd B(\bz)&\text{ says }&a_\bu \ge b_\bu\text{ for all }\bu\\
\bA(\bz) \unrhd \bB(\bz)&\text{ says }&A_i(\bz) \unrhd B_i(\bz)\text{ for all } i\\
\bA(\bz)> \bzero&\text{ says }&A_i(\bz) \neq 0, \text{ for all } i\\
\dom[\bz] &=&\{A(\bz) \in \bbR[[\bz]] : A(\bz) \unrhd \bzero\}\\
\dom_0[\bz] &=&\{A(\bz) \in \dom[\bz] : A(\bzero) = 0\}\\
\dom_{J0}[x,\by]&=& \{\bG(x,\by) \in \dom_0[x,\by]^k : J_\bG(0,\bzero) = \bzero\}, \text{ where } \by = y_1,\ldots,y_k\\
\hline
\end{array}
$$
}
%
For $k\ge 1$, the set $\bbF[[x]]^k$ becomes a complete
 metric space when equipped with the metric
$$
d\big(\bA(x),\bB(x)\big)\  :=\  
\begin{cases}
2^{-\min\,\ldegree\big(A_i(x) -B_i(x)\,:\, 1 \le i\le k\big)} &\text{if } \bA(x)\neq \bB(x)\\
0&\text{if }\bA(x) = \bB(x).
\end{cases}
$$
 One has $d\big(\bA_n(x),\bB_n(x)\big) \rightarrow 0$ as $n\rightarrow \infty$ iff
for all $m\ge 0$ there is an $N\ge 0$ such that $[x^{\le m}]\bA_n(x) = [x^{\le m}]\bB_n(x)$
for $n\ge N$; that is, for $n$ sufficiently large, the corresponding coordinates of $\bA_n$ and $\bB_n$
agree on their first $m+1$ coefficients.
The subset $\bbF[[x]]_0^k$ of $\bbF[[x]]^k$ is, with the same metric, also a complete metric
space. 

Let $k\ge 1$ be given, and let $\by := y_1,\ldots,y_k$. 
Given a $k$-tuple of formal power series 
$\bG(x,\by)\in\bbF[[x,\by]]_0^k$, 
and given $\bA(x)\in\bbF[[x]]_0^k$,
the composition  $\bG(x,\bA(x))$ is a well-defined member of $\bbF[[x]]_0^k$ 
if $\bG(x,\bzero) = \bzero$.
 (This is a sufficient, but not necessary condition.)
Such a $\bG(x,\by)$ can be viewed as a mapping from $\bbF[[x]]_0^k$ to itself, a mapping 
whose $n$-fold composition
with itself will be expressed by  $\bG^{(n)}(x,\by)$, a well-defined member of $\bbF[[x,\by]]_0^k$.
More precisely, 
\begin{eqnarray*}
\bG^{(0)}(x,\by) &=& \by,\\
\bG^{(n+1)}(x,\by) &=& \bG(x,\bG^{(n)}(x,\by)).
\end{eqnarray*}
The power series in the $i$th coordinate of $\bG^{(n)}(x,\by)$ will be denoted by $\bG_{\ i}^{(n)}(x,\by)$,
that is,
\begin{eqnarray*}
\bG^{(n)}(x,\by) & = & \big( \bG_{\ 1}^{(n)}(x,\by), \ldots, \bG_{\ k}^{(n)}(x,\by) \big).
\end{eqnarray*}

The basic results on existence and uniqueness of
solutions to systems hold in a quite general setting. When one wants to analyze the solutions
or the spectra in more detail, it becomes beneficial to use the real field $\bbR$.
%

\begin{proposition} \label{gen GJ=0}
Let $\bG(x,\by)\in\bbF[[x,\by]]^k$. 
If 
\begin{thlist}
\item
$\bG(0,\bzero) = \bzero$ and

\item
$J_\bG(0,\bzero) = \bzero$
\end{thlist}
then the equational system $\by = \bG(x,\by)$ 
\begin{thlist}
\item [i] has a unique solution $\bT(x)$ in $\bbF[[x]]_0^k$, 

\item [ii]
$\bT(x)$ satisfies the initial condition
$\bT(0) = \bzero$, and,

\item [iii]
 for any $\bA(x) \in \bbF[[x]]_0^k$, one has $($in the aforementioned complete metric space$)$
$$
\bT(x) = \lim_{n\rightarrow \infty} \bG^{(n)}(x,\bA(x)).
$$
\end{thlist}
If, furthermore,
\begin{thlist}
\item[c]
$\bbF =\bbR$,
\end{thlist}
then
\begin{thlist}
\item[iv]
$\bG(x,\by)\unrhd \bzero\ \Rightarrow\ \bT(x)\unrhd \bzero$.
\end{thlist}
\end{proposition}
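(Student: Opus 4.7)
The plan is to apply the Banach fixed-point theorem in the complete metric space $\bbF[[x]]_0^k$ to the operator $\Phi(\bA(x)) := \bG(x,\bA(x))$. The argument breaks into three pieces: checking that $\Phi$ is well-defined and sends $\bbF[[x]]_0^k$ into itself, proving that $\Phi$ is a strict contraction, and then reading off (i)--(iv) from Banach's theorem.

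Well-definedness of the composition is standard, since each coordinate of $\bA$ has no constant term, so only finitely many monomials of $\bG$ contribute to any fixed coefficient of $\bG(x,\bA(x))$. The hypothesis $\bG(0,\bzero)=\bzero$ then gives $\Phi(\bA)(0)=\bG(0,\bA(0))=\bzero$, so $\Phi(\bA)\in\bbF[[x]]_0^k$.

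The contraction estimate is the main technical step, and it is here that $J_\bG(0,\bzero)=\bzero$ is used. Together with $\bG(0,\bzero)=\bzero$, this hypothesis forces every monomial $c\,x^a y_1^{b_1}\cdots y_k^{b_k}$ appearing in any $G_j$ to fall into one of three cases: (I) $a\ge 1$ and $\sum b_i = 0$; (II) $a\ge 1$ and $\sum b_i = 1$; or (III) $\sum b_i \ge 2$. Pure constants and monomials of the form $c\,y_i$ are ruled out. Given $\bA\ne\bB$ in $\bbF[[x]]_0^k$, let $m\ge 1$ be the smallest index at which some coordinate of $\bA-\bB$ has a nonzero coefficient, so $d(\bA,\bB)=2^{-m}$. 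In case (I) the monomial contributes identically to $\Phi(\bA)$ and $\Phi(\bB)$; in case (II) its contribution to the difference is $c\,x^a(A_i-B_i)$, of ldegree $\ge a+m\ge m+1$; in case (III), telescoping $\prod_s A_{i_s}-\prod_s B_{i_s}$ as a sum of terms each containing one factor $A_{i_s}-B_{i_s}$ of ldegree $\ge m$ together with at least one other factor of ldegree $\ge 1$ (since every $A_i,B_i$ vanishes at $0$) gives ldegree $\ge m+1$. Therefore $d(\Phi(\bA),\Phi(\bB))\le 2^{-(m+1)}=\tfrac12 d(\bA,\bB)$.

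Banach's theorem then delivers a unique fixed point $\bT(x)\in\bbF[[x]]_0^k$, proving (i) and (ii), while (iii) is the standard convergence statement for iterates from any starting point in $\bbF[[x]]_0^k$. For (iv), with $\bbF=\bbR$ and $\bG\unrhd\bzero$, I would start the iteration at $\bA(x)=\bzero$: an easy induction using nonnegativity of $\bG$'s coefficients yields $\Phi^{(n)}(\bzero)\unrhd\bzero$ for every $n$, and componentwise inequalities on real coefficients are preserved under coefficient-wise convergence, so the limit satisfies $\bT(x)\unrhd\bzero$. The main obstacle is the bookkeeping in the contraction step: the three monomial types must each be classified correctly and the ldegree bound verified uniformly; once that is done, the rest is a routine application of Banach's theorem in the standard formal-power-series metric.
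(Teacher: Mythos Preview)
Your proposal is correct and follows essentially the same approach as the paper: both establish that $\bA\mapsto\bG(x,\bA)$ is a contraction on the complete metric space $\bbF[[x]]_0^k$ and then invoke Banach's fixed-point theorem. The paper states the key contraction property in the compressed form $[x^{\le n}]\bA=[x^{\le n}]\bB\Rightarrow[x^{\le n+1}]\bG(x,\bA)=[x^{\le n+1}]\bG(x,\bB)$ without justification, whereas you supply the monomial-by-monomial verification (your cases (I)--(III)) that the paper leaves implicit; your argument for (iv) via iteration from $\bzero$ is likewise what the paper intends by ``(iv) follows from (iii).''
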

\begin{proof}
For $\bA(x), \bB(x) \in \bbF[[x]]_0^k$ the hypotheses guarantee that
$$
[x^{\le n}]\bA(x) = [x^{\le n}]\bB(x)\quad \Rightarrow\quad 
[x^{\le n+1}]\bG(x,\bA(x)) = [x^{\le n+1}]\bG(x,\bA(x)).
$$
This implies that $\bG(x,\by)$ is a contraction mapping on the complete 
metric space $\dom_0[x]$, consequently  (i)--(iii) follow. Item (iv) follows
from (iii).
\end{proof}

\begin{definition} 
Given a power series $T(x)$, let $T = \Spec(T(x))$, the {\em spectrum} of $T(x)$,
 be the support of the sequence $t(n)$ of coefficients of $T(x)$,
that is, $T := \{n\ge 0 : t(n) \neq 0\}$. Extend the definition of spectrum to $k$-tuples $\bT(x)$
of power series by
$\bT = \Spec(\bT(x)) := (T_1,\ldots,T_k)$.
\end{definition}

 For $T(x) \in\bbF[[x]]$ let $\fm :=  \fm(T)$ and $\fq :=  \fq(T)$, as in
Definition \ref{defn per param}.  It is quite easy to see that the following hold:
 \begin{thlist}
 \item
 $x^{\fm}$ is the largest power of $x$ dividing $T(x)$, that is, 
 $\fm$ is the smallest index $n$
 such that $t(n)\neq 0$, 
 \item
 $x^{\fq}$ is the largest power of $x$ such that for $n\ge 0$,
 $t(n)\neq 0$ implies $\fq\, |\, n-\fm$.
 \item
There is a $($unique$)$ power series $V(x)\in\bbF[x]$
such that 
	$T(x) = x^\fm V(x^\fq)$. 
One has $\gcd(V)=1$.
\item
Suppose $\bbF = \bbR$ and $T(x)\unrhd 0$. If the radius of convergence
$\rho$ of $T(x)$ is in $(0,\infty)$ then the dominant singularities of $T(x)$
are $\rho\cdot \omega^j$, $j=0,\ldots, \fq-1$, where $\omega$ is a primitive
$\fq$th root of unity.
\end{thlist}
Under favorable conditions --- such as those encountered in \cite{BBY2006}, a study of
non-linear single equation systems $y = G(x,y)$ with solution $T(x)$ --- the spectrum $T$ of $T(x)$
is the union of a finite set and an arithmetical progression, and the coefficients $t(n)$
of $T(x)$ have `nice' asymptotics for $n$ on this spectrum. 
It would be an important achievement to show that any
system $\by = \bG(x,\by)$ built from standard components would have a solution $\bT(x)$
with the $T_i(x)$ exhibiting the positive features just described.  A first, and very modest 
step in this direction, is to show that such systems have spectra of the appropriate kind, 
namely eventually periodic spectra. This positive first step is achieved in Section 
\ref{Gen Sys}.

\subsection{Non-Negative Power Series and Elementary Systems}
A power series $A(\bz)\in\bbR[[\bz]]$ is \emph{non-negative} if $A(\bz)\unrhd 0$,
that is, each coefficient $a_\bu$ is non-negative.  $\bA(\bz)\in\bbR[[\bz]]^m$
is {non-negative} if each $A_i(\bz)$ is non-negative.
A system $\by = \bG(x,\by)\in\bbR[[x,\by]]^k$ is {non-negative}
if $\bG(x,\by)$ is non-negative. 

A non-negative power series $G(x,\by)$ can be expressed in the form
$$
\sum_{\bu \in \bbN^k} G_\bu(x)\cdot \by^\bu,
$$
where $\by^\bu$ is the monomial $y_1^{u_1}\cdots y_k ^{u_k}$.
A non-negative system $\by = \bG(x,\by)$ is \emph{elementary} iff
$\bG(0,\bzero)=J_\bG(0,\bzero)=\bzero$; this condition is easily seen
to be equivalent to requiring:
 for $\bu\in\bbN^k$
and $1\le i \le k$,
$$
G_{i,\bu}(0) \neq 0\ \Rightarrow\ \sum_{j=1}^k u_j \ge 2.
$$

When working with non-negative
power series, the $\Spec$ operator acts like a homomorphism, as the
next lemma shows. This allows one to convert equational specifications,
or equational systems defining generating functions, into equational
systems about spectra.

%
\begin{lemma} \label{spec hom}
Let $c> 0$ and let $A(x),A_i(x), B(x)\in \bbR[[x]]$ be non-negative power series. Then
\begin{thlist}
\item
$\Spec\big(c \cdot  A(x)\big) = A$

\item
$\Spec\big(A(x) + B(x)\big) = A \, \cup \,  B$

\item
$\Spec\Big(\sum_i A_i(x)\Big)\   =  \  \bigcup_i A_i$, 
\quad 
provided $\sum_i A_i(x)\in\bbR[x]$

\item
$\Spec\big(A(x)  \cdot  B(x)\big) = A + B$

\item
$\Spec\big(A(x) \circ B(x)\big) = A\star B$, provided $B(x) \in \bbR[x]_0$.
\end{thlist}
\end{lemma}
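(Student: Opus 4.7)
The plan is to prove all five identities by the same underlying observation: when one adds or multiplies non-negative real numbers, the result vanishes if and only if every summand (or at least one factor in each product) vanishes. Because every coefficient of every power series involved lies in $[0,\infty)$, no cancellation can ever occur, and the $\Spec$ operator therefore behaves like a homomorphism for the natural set-theoretic counterparts of scalar multiplication, addition, product, and composition.

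First I would dispatch the easy cases. For (a), $[x^n](cA(x)) = c\cdot a(n)$ is nonzero iff $a(n) \neq 0$ since $c>0$, which gives $\Spec(cA) = A$. For (b), since $a(n),b(n)\ge 0$, the sum $a(n)+b(n)$ is nonzero iff at least one summand is nonzero, giving $\Spec(A+B) = A\cup B$. For (d), the $n$-th coefficient of the Cauchy product is $\sum_{j+k=n}a(j)b(k)$, a sum of non-negative terms, so it is nonzero iff some $a(j)b(k)\neq 0$ with $j+k=n$, iff there exist $j\in A$, $k\in B$ with $j+k=n$, iff $n\in A+B$.

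Next I would extend (b) to (c): under the hypothesis that $\sum_i A_i(x)$ is a well-defined power series, $[x^n]\sum_i A_i(x) = \sum_i a_i(n)$ is again a sum of non-negative reals, hence nonzero iff some $a_i(n)\neq 0$, iff $n\in\bigcup_i A_i$. Finally for (e), assuming $B(x)\in\bbR[x]_0$ so that $B(0)=0$ and the composition is well-defined, I expand
\[
A(B(x))\ =\ \sum_{n\ge 0} a(n)\, B(x)^n.
\]
Iterating (d) gives $\Spec(B(x)^n)=n\star B$ (with the convention $\Spec(B^0)=\{0\}$ matching $0\star B = \{0\}$), and then (a) yields $\Spec(a(n)B(x)^n)=n\star B$ when $a(n)\neq 0$ (and $\O$ otherwise). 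Each $B(x)^n$ has lowest degree $\ge n$ (since $B\in\bbR[x]_0$), so the sum is coefficient-wise finite and (c) applies, giving
\[
\Spec\big(A\circ B\big)\ =\ \bigcup_{n\in A} n\star B\ =\ A\star B,
\]
the last equality being the definition of $A\star B$ in Definition \ref{set ops}.

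The only real obstacle is the minor bookkeeping in (e): one must check that the sum defining the composition converges coefficient-wise (so that (c) applies), and that the indexing conventions for $n=0$ match up with $0\star B = \{0\}$ when $0\in A$ versus $\O$ when $0\notin A$. Both points follow immediately from $B\in\bbR[x]_0$ and the definition of $\star$.
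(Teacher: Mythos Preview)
Your proof is correct and follows essentially the same approach as the paper: the paper simply declares (a)--(d) ``straight-forward'' and then gives the same chain of equalities for (e), expanding $A(B(x))=\sum_{i\ge 1}a(i)B(x)^i$ and applying the earlier parts. Your version is more carefully written --- in particular you track the $n=0$ term and the coefficient-wise finiteness needed for (c), which the paper glosses over --- but the argument is the same.
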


\begin{proof}
The first four cases (scalar multiplication, addition and Cauchy product) 
are straight-forward, as is  composition: 
\begin{eqnarray*}
   \Spec\big(A(x) \circ B(x)\big) & = & \Spec \sum_{i\ge 1} a(i) B(x)^i \ 
   =  \  \bigcup_{i\in A} \Spec\,\big( B(x)^i\big)\\
   & =&  \bigcup_{i\in A} i\star B \   =  \ A\star B.
\end{eqnarray*}
\end{proof}
%

One defines the dependency digraph $D_\bG$ for a system $\by = \bG(x,\by)$ parallel to
the way one defines it for a system of set-equations $\bY =  \bGamma(\bY)$, namely
$i\rightarrow j$ iff $G_i(x,\by)$ depends on $y_j$. 

\begin{lemma} [Tests for eventually dependent]
Given a non-negative system $\by = \bG(x,\by)$,  the following are equivalent:
\begin{thlist}
\item
$i\rightarrow^+ j$

\item
there is an $m \in \{1,\ldots,k\}$ such that
the $(i,j)$ entry of ${J_\bG}(x,\by)^m$ is not $0$

\item
the $(i,j)$ entry of $\sum_{m=1}^k {J_\bG}(x,\by)^m$ is not $0$.
\end{thlist}
\end{lemma}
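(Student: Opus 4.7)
The plan is to reduce this to the parallel statement for set-equation systems given just after the definition of the dependency digraph, by showing that the $0$-$1$ dependency matrix of $D_\bG$ faithfully records whether entries of powers of $J_\bG(x,\by)$ are nonzero. Let $M$ be the $k\times k$ matrix over $\{0,1\}$ with $M_{i,j}=1$ iff $\partial G_i/\partial y_j \neq 0$ in $\bbR[[x,\by]]$. By the definition of $D_\bG$ given just before the lemma, $M_{i,j}=1$ is exactly the condition $i\rightarrow j$ in $D_\bG$, so $M$ is the dependency matrix of $D_\bG$.

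First I would record the crucial non-negativity consequence: since $\bG(x,\by)\unrhd\bzero$, each entry of $J_\bG(x,\by)$ is a non-negative power series, hence any sum or product of such entries is non-negative and vanishes iff each summand, respectively each factor, vanishes. Expanding
\[
(J_\bG^m)_{i,j} \ =\ \sum_{i=i_0,i_1,\ldots,i_m=j} \prod_{\ell=1}^m \frac{\partial G_{i_{\ell-1}}}{\partial y_{i_\ell}},
\]
the non-negativity observation shows that $(J_\bG^m)_{i,j}\neq 0$ iff some summand is nonzero iff there is a sequence $i=i_0,i_1,\ldots,i_m=j$ with every factor nonzero iff there is a directed path of length exactly $m$ from $i$ to $j$ in $D_\bG$. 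Equivalently $(J_\bG^m)_{i,j}\neq 0$ iff $(M^m)_{i,j}\ge 1$ when $M^m$ is viewed as a matrix of non-negative integers. Summing over $m$ from $1$ to $k$ gives, again by non-negativity, that the $(i,j)$-entry of $\sum_{m=1}^k J_\bG^m$ is nonzero iff the $(i,j)$-entry of $\sum_{m=1}^k M^m$ is nonzero.

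With this translation in hand, I would invoke the three-way equivalence stated just after the definition of the dependency digraph for set-equations: $i\rightarrow^+ j$ iff $(M^n)_{i,j}\neq 0$ for some $n\in\{1,\ldots,k\}$ iff the $(i,j)$-entry of $M+\cdots+M^k$ is nonzero. Chaining this with the preceding paragraph yields the equivalence of (i), (ii), (iii).

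The only point that requires any care is the non-cancellation step, i.e., that for non-negative power series $H_1,\ldots,H_m$ the product $H_1\cdots H_m$ vanishes iff some $H_\ell$ vanishes, and a sum of non-negative power series vanishes iff each summand does. These are elementary: the lowest-order monomial appearing with nonzero coefficient in a product of non-negative series is the sum of lowest-order monomials of the factors, so no coefficient can be cancelled. I expect this to be routine rather than a real obstacle; the whole content of the lemma is this translation from the Jacobian to the $0$-$1$ dependency matrix that the set-equation version of the statement already handles.
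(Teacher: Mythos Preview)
Your proposal is correct. The paper does not supply a proof of this lemma at all; it is stated without proof, evidently regarded as routine given the non-negativity hypothesis and the earlier equivalence for set-equation systems. Your argument---that non-negativity of $\bG$ makes each entry of $J_\bG(x,\by)$ a non-negative power series, so that the path expansion of $(J_\bG^m)_{i,j}$ is nonzero iff some directed walk of length $m$ exists in $D_\bG$, and then invoking the $0$--$1$ matrix equivalence already recorded---is exactly the intended justification and fills in what the paper leaves implicit.
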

%

In practice one only works with systems that have a connected dependency digraph. 
Otherwise the
system trivially breaks up into several independent subsystems.
There has been considerable interest in irreducible systems,
 where every
$y_i$ eventually depends on every $y_j$. 
Such systems behave similarly to one-equation
systems.
However, even some non-negative irreducible systems $\by = \bG(x,\by)$ can be easily
decomposed 
into several independent subsystems --- this will happen precisely when ${J_\bG}(x,\by)^k$
has some zero entries. If not, then ${J_\bG}(x,\by)^k > \bzero$, which is precisely the case 
when the matrix ${J_\bG}(x,\by)^k$ is {\em primitive} --- this is equivalent to the system
being {\em aperiodic} and {\em irreducible}. $($See, for example, \cite{FlSe2009}.$)$
Awareness of the possibility of decomposing irreducible systems is important for practical
computational work. The next result is our main theorem on power series systems.

\begin{theorem}  \label{Thm PSeries}
For an elementary system $\by = \bG(x,\by)$
 the following hold:
\begin{thlist}
\item
The system  has a unique solution 
$\bT(x)$ in  $\bbR[[x]]_0^k$.

\item
$\bT(x)\unrhd \bzero$, that is, the coefficients of each $T_i(x)$ are non-negative.

\item
 $\displaystyle \bT(x) = \bG^{(\infty)} (x,\bA(x)) := \lim_{n\rightarrow\infty}  \bG^{(n)} (x,\bA(x))$,
  for any $\bA(x)\in\bbR[[x]]$
 satisfying $\bA(0)=\bzero$.

\item
The $k$-tuple $\bT$ of spectra $T_i$ is the unique solution to the 
elementary system of set-equations
$\bY = \bGamma(\bY)$ where
$$
 \bGamma(\bY)\  := \ \bigcup_{\bu\in\bbN^k} \bG_{\bu} + \bu \star \bY.
$$

\item
$\displaystyle
\bT \ =\ \bGamma^{(\infty)}(\bA)\ :=\ 
\lim_{n\rightarrow \infty}\bGamma^{(n)}(\bA), \ \text{for any } \bA\in\Su(\bbP)^k.
$

\item
$T_i(x) = 0\ $ iff $\ G_{\ i}^{(k)}(x,\bzero) = 0\ $ iff $\ T_i = \O\ $ iff 
$\ \Gamma_{\ i}^{(k)}(\bempty) = \O\ $ iff $\ \fm_i = \infty$.

\end{thlist}
Now we assume that the system has been reduced by eliminating all
$y_i$ for which $T_i(x)=0$.
\begin{thlist}

\item[g]
 $[i] \neq\O$ implies $T_i$ is periodic. If also there is a $j\in[i]$ such that
 for some $\bu\in\bbN^k$ one has
 $G_{j,\bu}\neq \O$ and $\sum\{u_\ell : \ell \in [i]\} \ge 2$, then $T_i$ is the union of
 a finite set with a single arithmetical progression.

\item[h]
 If $[i] = \O$ and the i\:{\em th} equation can be written in the form
 $$
 Y_i\ :=\ P_i \  +\ \bigcup_{Q \in \fQ_i} \sum_{j=1}^k Q_j\star Y_j,
 $$
 with $P_i$ [eventually] periodic, and with  $\fQ_i$ a finite set of $k$-tuples $Q = (Q_1,\ldots,Q_k)$ of 
 [eventually] periodic subsets $Q_j$ of $\bbN$, and if
 for $i \rightarrow j$ one has $T_j$ being [eventually] periodic, then
 $T_i$ is [eventually] periodic.
%
\item[i]
The periodicity parameters $\bfm,\bfq$ of $\bT$ can be found from
$\bGamma^{(k)}(\bempty) $ and
the 
$\bG_\bu$ via the formulas
\begin{eqnarray}
 	\fm_i :=  \fm_i(T_i) 
		&=& \min\Big(\Gamma_{\ i}^{(k)}(\bempty)\Big) \label{m formula2}\\
	\fq_i := \fq(T_i) 
		&=&\gcd \bigg(\bigcup_{i\raStar  j}
 		\bigcup_{\bu\in\bbN^k} \Big( G_{j,\bu} + \bu\star\bfm  -\fm_j \Big) \bigg). \label{q formula2}
 \end{eqnarray}

 \item[j]
$\fq_i\, |\, \fq_j$ whenever $i\rightarrow j$.
\end{thlist}
\end{theorem}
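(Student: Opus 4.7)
The plan is to translate the entire theorem into the parallel set-equation theorem (Theorem \ref{ThmSetEqn}) by applying the $\Spec$ operator to the system, so that essentially nothing new needs to be proved.

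For items (a)--(c), I would first observe that the elementary condition on $\bG(x,\by)$ is equivalent to $\bG(0,\bzero)=\bzero$ and $J_\bG(0,\bzero)=\bzero$ (indeed, these say that $G_{i,\bu}(0)=0$ whenever $|\bu|\le 1$, which is the contrapositive of the elementary condition for $|\bu|\le 1$). Proposition \ref{gen GJ=0} over $\bbF=\bbR$ then gives uniqueness, non-negativity, and convergence of iterates from any starting point in $\bbR[[x]]_0^k$.

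The key step is (d). Substituting the solution into the system gives $T_i(x)=\sum_{\bu} G_{i,\bu}(x)\,\bT(x)^\bu$. The elementary condition forces $\bT(0)=\bzero$ with each $T_j(x)$ divisible by $x$, so the smallest power of $x$ appearing in $G_{i,\bu}(x)\,\bT(x)^\bu$ is at least $|\bu|$; thus only finitely many $\bu$ contribute to any given coefficient $[x^n]$, and the sum is a well-defined non-negative power series. For a sum of non-negative power series whose overall sum is well-defined, $\Spec$ distributes over the sum (a non-negative coefficient vanishes iff every non-negative summand's coefficient vanishes). Combining this distributivity with Lemma \ref{spec hom}(a),(b),(d),(e) applied to each $G_{i,\bu}(x)\cdot T_1(x)^{u_1}\cdots T_k(x)^{u_k}$, I obtain $T_i=\bigcup_{\bu} G_{i,\bu}+\bu\star \bT=\Gamma_i(\bT)$, establishing (d).

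For items (e)--(j), I first note the derived set-equation system $\bY=\bGamma(\bY)$ is itself elementary: since $\Gamma_{i,\bu}=\Spec\,G_{i,\bu}(x)$, one has $0\in\Gamma_{i,\bu}$ iff $G_{i,\bu}(0)\neq 0$, so the elementary conditions correspond exactly. Moreover the dependency digraph of $\bY=\bGamma(\bY)$ coincides with that of $\by=\bG(x,\by)$, because both record whether some $G_{i,\bu}\neq 0$ has $u_j>0$. Theorem \ref{ThmSetEqn}(a) applied to the spectrum system yields (e). For (f), the chain of equivalences is immediate: $T_i(x)=0\Leftrightarrow T_i=\O\Leftrightarrow \fm_i=\infty$ hold by definition, the equivalence $T_i=\O\Leftrightarrow \Gamma_{\ i}^{(k)}(\bempty)=\O$ is Theorem \ref{ThmSetEqn}(b), and $T_i(x)=0\Leftrightarrow G_{\ i}^{(k)}(x,\bzero)=0$ follows by applying $\Spec$ termwise to the stabilizing iteration (using Proposition \ref{gen GJ=0}(iii) with $\bA=\bzero$, together with the spectrum translation of Lemma \ref{catch-all}(d)). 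After reduction (eliminating vanishing $T_i(x)$'s), items (g), (h), (i), (j) are literal translations of Theorem \ref{ThmSetEqn}(c), (d), (e), (f) applied to the spectrum system.

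The only non-routine step is the distributivity of $\Spec$ over the infinite sum in (d); this is exactly where the elementary condition is essential, since it forces each formal coefficient to be a finite non-negative sum. Every other item is a mechanical transcription through the $\Spec$/set-equation dictionary, so no substantive difficulty remains once (d) is established.
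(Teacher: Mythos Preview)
Your proposal is correct and follows essentially the same approach as the paper: invoke Proposition \ref{gen GJ=0} for (a)--(c), apply $\Spec$ to both sides of $\bT(x)=\bG(x,\bT(x))$ for (d), and then transfer (e)--(j) to Theorem \ref{ThmSetEqn} after checking that the induced set-system is elementary with the same dependency digraph. You supply considerably more justification than the paper's terse proof (in particular, your careful remark about why $\Spec$ distributes over the infinite sum in (d), and your explicit chain of equivalences for (f)), but the underlying strategy is identical.
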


\begin{proof}
Items (a)--(c) are immediate from Proposition \ref{gen GJ=0}.
For (d) simply apply $\Spec$ to both sides of 
$\bT(x) = \bG(x,\bT(x))$. For  (e)--(j) note that the hypotheses
of  the theorem imply that $\bGamma(\bY)$ satisfies the
hypotheses of Theorem \ref{ThmSetEqn}, so one can use
the formulas \eqref{m formula} and \eqref{q formula}.

\end{proof}

Systems that arise in combinatorial problems are invariably reduced since the 
solution gives generating functions for non-empty classes of objects. However if
one should encounter a non-reduced elementary polynomial system $\by = \bG(x,\by)$, 
Theorem \ref{Thm PSeries}\,(f) provides an efficient way to determine which
of the solution components $T_i(x)$ will be 0, namely let 
$\mu$ map any member $A(x)\in\dom_0[x]$ to its lowest degree term, setting the 
coefficient to 1;  extend
this to $\dom_0[x]^k$ coordinate-wise. Then
$$
T_i(x)=0\quad\text{iff}\quad
\Big((\mu \circ \bG)^{(k)}(x,\bzero)\Big)_i = 0. 
$$

%
\subsection{Periodicity Results for Linear Systems}
Irreducible linear equations $y = G_0(x) + G_1(x)y$ do not, in general, have the property that
the spectrum $T$ is eventually an arithmetical progression. For example, let $y=T(x)$ be 
the power series solution to
\[
	y \   =  \ x + x^2 + x^3y.
\]
The periodicity parameters of $T$ are $\fm=1$, $ \fq=1$, 
$\fp=3$, and $\fc=1$.
$T$ is readily seen to be 
$$
	\{3n+1 : n\ge 0\}\cup\{3n+2: n\ge 0\},
$$
and the set of periods of $T$ is the same as the set of eventual periods of $T$, namely $3\cdot \bbN$.

The spectrum of a 1-equation elementary linear system has a particularly simple expression.
%
%
\begin{proposition}\label{lin sys}
Given a 1-equation elementary linear system
$$
	y \   =  \  G(x,\by) \ :=\ G_0(x) + G_1(x)\cdot y,
$$
the solution is 
$$
T(x)\ =\ \Big(\sum_{n\ge 0}G_1(x)^n\Big) \cdot G_0(x),
$$
the spectral equation is
$$
	Y \   =  \  \Gamma(Y) \ :=\ G_0 \cup (G_1+Y),
$$
and the spectrum is
$$
T\ =\ \Big(\bigcup_{n\ge 0} n\star G_1\Big) + G_0\ =\ G_0 + \bbN\star G_1.
$$
Thus $\fm(T) =  \min(G_0)$ and $\fq(T) = \gcd\big((G_0 - \fm(T)) \cup G_1\big) $.
\end{proposition}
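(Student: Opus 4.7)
The plan is to derive the four assertions in turn. First, for the closed form of $T(x)$: since the system is elementary, $G_1(0) = 0$, so $G_1(x) \in x\bbR[[x]]$, which makes $1 - G_1(x)$ a unit in $\bbR[[x]]$ with geometric inverse $\sum_{n \ge 0} G_1(x)^n$. Rewriting the fixed-point equation as $(1 - G_1(x))T(x) = G_0(x)$ and multiplying by the inverse yields the stated formula. Second, the spectral equation is immediate from Theorem~\ref{Thm PSeries}(d): expanding $G(x,y) = \sum_u G_u(x)\,y^u$ shows that only $G_0(x)$ (at $u=0$) and $G_1(x)$ (at $u=1$) are nonzero, so $\Gamma(Y) = G_0 \cup (G_1 + 1\star Y) = G_0 \cup (G_1 + Y)$, using $0 \star Y = \{0\}$ and $\O + S = \O$ to discard the higher $u$.

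For the spectrum, apply $\Spec$ to the closed form of $T(x)$. Because $G_1(x)^n \in x^n \bbR[[x]]$, each coefficient of $\sum_n G_1(x)^n G_0(x)$ is a finite sum of non-negative reals, so the argument of Lemma~\ref{spec hom}(c) extends to this infinite sum. Combining with Lemma~\ref{spec hom}(d) gives $\Spec(G_1(x)^n \cdot G_0(x)) = n \star G_1 + G_0$, whence
\begin{equation*}
T\ =\ \bigcup_{n \ge 0} (G_0 + n\star G_1)\ =\ G_0 + \bigcup_{n \ge 0} n\star G_1\ =\ G_0 + \bbN \star G_1,
\end{equation*}
by the distributivity of $+$ over $\cup$ in Lemma~\ref{basic identities} and the definition $A \star B := \bigcup_{a \in A} a \star B$.

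For the periodicity parameters, observe that $0 \star G_1 = \{0\}$ by Definition~\ref{set ops}, so $0 \in \bbN\star G_1$ and $\fm(T) = \min(G_0) + 0 = \min(G_0)$. For $\fq(T)$, shift to get $T - \fm(T) = (G_0 - \fm(T)) + \bbN\star G_1$; since both summands contain $0$, the identity $\gcd(U + V) = \gcd(\gcd U, \gcd V)$ used in the proof of Proposition~\ref{Karen's Table} reduces the problem to computing $\gcd(\bbN \star G_1) = \gcd(G_1)$, which holds because every element of $\bbN\star G_1$ is a non-negative integer combination of elements of $G_1$ and every element of $G_1$ itself lies in $\bbN\star G_1$. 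Rewriting the final gcd as the gcd of a union then yields $\fq(T) = \gcd((G_0 - \fm(T)) \cup G_1)$. The only real obstacle is bookkeeping the degenerate cases $G_0 = \O$ (where $T = \O$ and $\fm = \infty$) and $G_1 = \O$ (where the recursion collapses to $T(x) = G_0(x)$), both of which must be checked against the paper's conventions for $\star$, $\gcd$, and $\min$ on empty sets; apart from that the argument is a direct unwinding of the definitions via Lemma~\ref{spec hom}, Lemma~\ref{basic identities}, and Theorem~\ref{Thm PSeries}(d).
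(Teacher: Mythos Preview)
Your proof is correct and is precisely the ``straightforward'' unwinding the paper has in mind: the paper gives no argument beyond that one word, and your derivation via the geometric-series inverse, Lemma~\ref{spec hom}, Lemma~\ref{basic identities}, and the $\gcd(U+V)$ identity from Proposition~\ref{Karen's Table} is exactly the intended route.
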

%
The proof of the proposition is straightforward. 
From the form of the solution for $T$ one sees that every periodic subset of $\bbP$ 
is the spectrum of the solution to some 1-equation linear system.

The next two examples, of linear systems, are cornerstones in the study of 
systems.

%
\begin{example}[Postage Stamp Problem]  \label{postage}
The {\em postage stamp problem} $($an equivalent version is called the {\em coin change
problem}$)$ asks for the amounts of postage one can put on a package if one has stamps in
denominations $d_1,\ldots,d_r$. 
With $D=  \{d_1,\ldots,d_r\}$ the set of denominations of the stamps, let 
$D(x) = \sum_{i=1}^r x^{d_i}$. 
Then the postage stamp problem has the generating function $S(x)$ $($with $s(n)$ giving the
number of ways to realize the postal amount $n$$)$ being the solution to the elementary linear recursion
\[
   y\   =  \ D(x) + D(x) \cdot y.
\]
The spectrum $S$ is the solution to the set-equation
\[
   Y = D  \, \cup \,  (D + Y),
\]
which, by Proposition \ref{lin sys} is 
$S= \bbP\star D$. By Lemma \ref{lin comb}, 
$S$ is periodic, $\fq = \fp = \gcd(S) = \gcd(D)$, and
$S = S\big|_{<\fc} \cup (\fc + \fq\cdot \bbN)$, 
where $\fc := \fc(S)$, etc.\footnote
{The number $\gamma(D) := \fc(S)$ is called 
{\em the conductor of} $D$ by Wilf (see \cite{Wilf1994}, $\S$3.15.).
$\gamma(D)-1$ is called the \textit{Frobenius number}, and the problem of finding it
is called the Frobenius Problem (or Coin Problem). The problem can easily be reduced to
the case that $\gcd(D) = 1$, in which case every number $\ge \gamma(D)$ is in $\bbN\star D$, 
but $\gamma(D) -1 \notin \bbN\star D$.
For $D$ a finite set of positive integers, considerable effort has been devoted to finding a formula 
for $\gamma(D)$ for $D$ with few elements.  
The only known closed forms are for $D$ with 1, 2 or 3 elements. 
For $D = \{b_1,b_2\}$ with 2 co-prime elements , the solution is 
$\gamma(D) = (b_1-1)(b_2-1)$, found by
Sylvester in 1884. Finding $\gamma(D)$ is known to be NP-hard.}
 \end{example}

 %
 \begin{example}[Paths in Labelled Digraphs]  \label{paths}
The objective in this example is to find the set of lengths of the paths
 going from vertex 1 to vertex 4
 in the labelled digraph in Fig.~\ref{digraph pic}.
 %
 \begin{figure}[h] 
   \centering
   \includegraphics[width=2in]{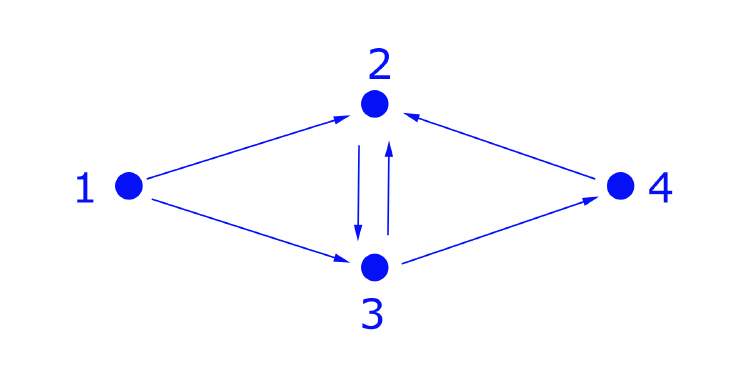} 
   \caption{A Labelled Digraph}
   \label{digraph pic}
\end{figure}

For $1\le i \le 4$, let $L_i(x) = \sum_{n\ge 1} \ell(n) x^n$ be the generating function for the 
lengths of paths going from vertex i to vertex 4, that is, 
$\ell_i(n)$ counts the number of paths of length $n$ from vertex $i$ to vertex $4$. 
 Then  $\by = \bL(x)$ satisfies the following system $\by = \bG(x,\by)$:
 \begin{eqnarray*}
    y_1 & = &  x \cdot \big(y_2 +y_3\big)\\
    y_2 & = &  x \cdot y_3\\
    y_3 & = & x \cdot \big(y_2 + 1+y_4\big)\\
    y_4 & = & x \cdot y_2.
\end{eqnarray*}
One has $\bG(x,\by)\unrhd 0$ and $\bG(0,\bzero) = J_\bG(0,\bzero) = \bzero$, 
so the system is elementary.
The associated elementary spectral system $\bY=\bGamma(\bY)$ is:
 \begin{eqnarray*}
    Y_1 & = &  1 + (Y_2 \cup Y_3)\\
    Y_2 & = &  1 + Y_3\\
    Y_3 & = &  1 \cup  ( 1 + (Y_2 \cup Y_4))\\
    Y_4 & = & 1 + Y_2.
\end{eqnarray*}
To calculate the $\fm_i$ and $\fq_i$ for this system, first
$$
\bGamma(\bempty)\ =\ 
\left [ 
\begin{array}{c}
\O\\
\O\\
1\\
\O
\end{array}
\right]
\quad
\bGamma^{(2)}(\bempty)\ =\ 
\left [ 
\begin{array}{c}
2\\
2\\
1\\
\O
\end{array}
\right]
\quad
\bGamma^{(3)}(\bempty)\ =\ 
\left [ 
\begin{array}{c}
\{2,3\}\\
2\\
\{1,3\}\\
3
\end{array}
\right]
\quad
\bGamma^{(4)}(\bempty)\ =\ 
\left [ 
\begin{array}{c}
\{2,3,4\}\\
\{2,4\}\\
\{1,3,4\}\\
3
\end{array}
\right],
$$
thus, by \eqref{m formula2}, $\fm = (2,2,1,3)$. For such a simple example one also easily finds the
$\fm_i$ by inspection --- $\fm_i$ is the length of the shortest path in 
Fig.~\ref{digraph pic} from vertex $i$ to vertex $4$.

To calculate the $\fq_i$ let 
$$
S_j \ :=\ \bigcup_\bu G_{j,\bu} + \bfm\star\bu - \fm_j,\quad \text{for }1 \le j \le 4.
$$
Then $S_1 = \{ 0,1\}$, $S_2 = \{0\}$, $S_3 = \{0,2,3\}$, and $S_4=\{0\}$. 
The digraph in Fig.~\ref{digraph pic} is, conveniently, also the dependency
digraph of the system, and
$\{2,3,4\}$ is a strong component. From \eqref{q formula2},
 $\fq_i = \gcd\bigcup_{i\raStar j} S_j$,
so 
$\fq_1 = \gcd\big(S_1\cup S_2\cup S_3\cup S_4\big) = \gcd\{0,1,2,3\} = 1$, 
and $\fq_2=\fq_3 = \fq_4 = \gcd\big(S_2\cup S_3\cup S_4\big) = \gcd\{0,2,3\} = 1$.
\end{example}
%

%
\subsection{Relaxing the Conditions on $\bG(x,\by)$}
Recall that
a power series system $\by = \bG(x,\by)$ is elementary if 
(i) $\bG(x,\by)\unrhd \bzero$, (ii) $\bG(0,\bzero) = \bzero$ and 
(iii) $J_\bG(0,\bzero) = \bzero$.

The `elementary system' requirement of Theorem \ref{Thm PSeries} is usually 
true for power series systems $\by = \bG(x,\by)$ arising in combinatorics --- see, for example,
the book \cite{FlSe2009} of Flajolet and Sedgewick, where most of the examples are such
that $x$ is a factor of $\bG(x,\by)$, a property of $\bG(x,\by)$ which immediately guarantees 
that the second and third of the three conditions holds. 
The second condition, $\bG(0,\bzero)=\bzero$,
is essential if the solution $\bT(x)$ provides generating functions $T_i(x)$ for 
combinatorial classes $\cT_i$ since, in these cases, $T_i\subseteq \bbP$,
so $0\notin T_i$, for an $i$.

Dropping the first requirement, that  $\bG(x,\by) \unrhd \bzero$,
leads to a difficult area of research where little is known, even with a single equation
$y = G(x,y)$ --- see the final sections of \cite{BBY2006} for several remarks on the difficulties
mixed signs in $G(x,y)$ pose when trying to determine the asymptotics of the coefficients 
$t(n)$ of a solution $y = T(x)$. 
Such mixed sign situations can arise naturally, for example when dealing
with the construction $\Set$, which forms subsets of a given set of objects. 
The method 
developed in this paper for studying the spectra of the solutions $T_i(x)$ of a system 
$\by = \bG(x,\by)$ very much depends on $\bG(x,\by) \unrhd \bzero$, in particular,
claiming that $\Spec\big(\bG_\bu(x)\cdot \bT(x)^\bu\big)$ is equal to 
$\bG_\bu + \bu\star \bT$. This equality can fail with mixed signs, for example, 
the spectrum of $(1-x)\cdot (1+x+x^2)$ is not
the same as $\Spec(1-x) + \Spec(1+x+x^2)$.

Thus the discussion regarding strengthening the results of the previous sections
will be limited 
to dropping the third requirement, that $J_\bG(0,\bzero) = \bzero$. 
This simply means that linear $\by$-terms with constant coefficients are permitted to 
appear in the $G_i(x,\by)$, in which case a number of new possibilities 
can arise when classifying the solutions of such systems:
\begin{thlist}
\item
There may be no (formal power series) solution, for example,
$y = x + y$.

\item
There may be a solution, but not $\unrhd\, \bzero$, for example,
$y = x + 2y$.

\item
There may be infinitely many solutions, for example, 
$y_1 = y_2$, $y_2 = y_1$.
\end{thlist}

One can express the system
$ \by = \bG(x,\by) $ 
as
$$
   \by\   =  \  \bG(x,\bzero) + J_\bG(0,\bzero)\cdot\by + \bH(x,\by),
$$
where 
$$
   \bH(x,\by)\   =  \  \sum_{i=1}^k y_i \cdot \bH_i(x,\by)
$$
with each 
$\bH_i(x,\by)  \in  \bbR[[x,\by]]_0^k$. 

The obvious approach to such a system
with $J_\bG(0,\bzero) \neq \bzero$ 
is to write it in the form
$$
\big(I - J_\bG(0,\bzero)\big)\cdot \by \ =\ \bG(x,0) + \bH(x,\by)
$$
and solve for $\by$. 

%
\begin{definition}[of $\widehat{\bG}$] \label{widehat G}
Given $\bG(x,\by)\unrhd \bzero$ with $\bG(0,\bzero)=\bzero$,
if the matrix $I-J_\bG(0,\bzero)$ has an inverse that is non-negative
then
let
 $$
	 \widehat{\bG}(x,\by)\ := \ 
	 \Big(I - J_\bG(0,\bzero)\Big)^{-1}
	\cdot \Big(\bG(x,\bzero) + \bH(x,\by)\Big).
$$
\end{definition}
%

Given a non-negative square matrix $M$, let $\Lambda(M)$ denote the largest real 
eigenvalue of $M$. 
(Note: From the Perron-Frobenius theory we know that a non-negative square matrix $M$ 
has a non-negative real eigenvalue, 
hence there is indeed a largest real eigenvalue $\Lambda(M)$, it is $\ge 0$,
and $\Lambda(M)$ has a non-negative eigenvector.)

%
\begin{theorem}\label{Consis Thm}
Let $\bG(x,\by)\in\bbR[[x,\by]]^k$ satisfy the two conditions
$$
\bG(x,\by)\unrhd \bzero,\ \text{ and }
\bG(0,\bzero) = \bzero.
$$
\begin{thlist}
\item
Suppose $ I - J_\bG(0,\bzero)$ has a non-negative inverse.

\subitem {\em (i)}
The system $\by = \widehat{\bG}(x,\by)$ is equivalent to the system $\by =\bG(x,\by)$,
that is, they have the same solutions $($but not necessarily the same dependency digraph$)$.

\subitem{\em (ii)}
$\widehat{\bG}(x,\by)$ is an elementary system.

\subitem {\em (iii)}
Consequently 
$\bT(x) := \widehat{\bG}^{(\infty)}(x,\bzero)$ is the unique solution in $\bbR[[x]]_0$ 
of $\by =\bG(x,\by)$ as well as of $\by = \widehat{\bG}(x,\by)$. The periodicity
properties of $\bT(x)$ are as stated in Theorem \ref{Thm PSeries}.

\item
 Suppose that $\bG^{(k)}(x,\bzero) > \bzero$,  that is, the associated 
 system $\bY = \bgam(\bY)$ of set equations is reduced. 
 Then the following are equivalent:

\subitem {\em (i)}  $I - J_\bG(0,\bzero)$ has a non-negative inverse.

\subitem {\em (ii)} $\by =\bG(x,\by)$ has a solution $\bT(x)\in\dom_0[x]$.

\subitem {\em (iii)} $\Lambda\big(J_\bG(0,\bzero)\big) < 1$.
\end{thlist}
\end{theorem}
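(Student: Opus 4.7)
The plan is to dispatch (a) by direct algebraic manipulation and verification of the elementary-system conditions, and then to derive (b) by combining the classical non-negative-matrix characterization of $(I-M)^{-1} \unrhd \bzero$ with an argument that exploits the reducedness hypothesis.

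For (a)(i), the Taylor rewrite $\bG(x,\by) = \bG(x,\bzero) + J_\bG(0,\bzero)\by + \bH(x,\by)$ converts $\by = \bG(x,\by)$ into $(I - J_\bG(0,\bzero))\by = \bG(x,\bzero) + \bH(x,\by)$, and left-multiplication by the (invertible) $(I - J_\bG(0,\bzero))^{-1}$ yields $\by = \widehat{\bG}(x,\by)$, preserving the solution set in either direction. For (a)(ii) I verify the three defining conditions of elementariness for $\widehat{\bG}$: non-negativity follows from the non-negative inverse together with $\bG(x,\bzero),\bH(x,\by) \unrhd \bzero$; $\widehat{\bG}(0,\bzero) = \bzero$ uses $\bG(0,\bzero) = \bzero$ and $\bH(0,\bzero) = \bzero$; and $J_{\widehat{\bG}}(0,\bzero) = \bzero$ because differentiating $\bH = \sum_j y_j \bH_j$ and evaluating at $(0,\bzero)$ gives $J_\bH(0,\bzero) = \bzero$ from $\bH_j(0,\bzero) = \bzero$. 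Then (a)(iii) is immediate on applying Theorem \ref{Thm PSeries} to the elementary system $\by = \widehat{\bG}(x,\by)$.

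For (b), the equivalence (i) $\Leftrightarrow$ (iii) is a standard fact about non-negative matrices: when $\Lambda(M) < 1$ the Neumann series $\sum_{n\geq 0} M^n$ converges to $(I-M)^{-1} \unrhd \bzero$; conversely, if $(I-M)^{-1} \unrhd \bzero$ exists, then $1$ cannot be an eigenvalue (else $I-M$ is singular), and if any $\lambda > 1$ were an eigenvalue, a non-negative Perron--Frobenius left eigenvector $\bv$ would yield $\bv^T(I-M)^{-1} = \bv^T/(1-\lambda)$ with negative entries, a contradiction. The direction (i) $\Rightarrow$ (ii) is immediate from (a)(iii). The real content is (ii) $\Rightarrow$ (iii): assume a solution $\bT(x) \in \dom_0[x]^k$ exists and that $\bG^{(k)}(x,\bzero) > \bzero$, and suppose for contradiction that $\lambda := \Lambda(M) \geq 1$. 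By Perron--Frobenius, pick a non-negative left eigenvector $\bv \neq \bzero$ with $\bv^T M = \lambda \bv^T$; multiplying $(I-M)\bT(x) = \bG(x,\bzero) + \bH(x,\bT(x))$ by $\bv^T$ yields $(1-\lambda)\bv^T\bT(x) = \bv^T\bG(x,\bzero) + \bv^T\bH(x,\bT(x))$.

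When $\lambda > 1$, the left side is coefficient-wise $\unlhd \bzero$ and the right side is $\unrhd \bzero$, so both vanish; $\bv^T\bT(x) = 0$ with $\bv \neq \bzero$ then forces $T_i(x) = 0$ for some $i$, contradicting $T_i(x) \unrhd G^{(k)}_i(x,\bzero) > 0$ (the bound coming from the monotone iteration $\bG^{(n)}(x,\bzero) \unlhd \bT(x)$). The delicate case is $\lambda = 1$, where the left side vanishes automatically and the contradiction must be extracted from the right side. Setting $I := \{i : v_i > 0\}$, I would argue that $\bv^T\bG(x,\bzero) = 0$ gives $G_i(x,\bzero) = 0$ for $i \in I$; expanding $\bH = \sum_j y_j \bH_j$, the identity $\bv^T\bH(x,\bT(x)) = 0$ breaks into non-negative summands $T_j(x)\bv^T\bH_j(x,\bT(x))$ each of which must vanish, and since reducedness makes every $T_j$ non-zero and non-negative power series admit no cancellation, $(H_j)_i(x,\by) \equiv 0$ for every $j$ and $i \in I$; finally $\bv^T M = \bv^T$ with $\bv \unrhd \bzero$ gives $M_{i,j} = 0$ whenever $i \in I$, $j \notin I$. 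Combining, $G_i(x,\by) = \sum_{\ell \in I} M_{i,\ell} y_\ell$ for $i \in I$, and induction on $n$ gives $G^{(n)}_i(x,\bzero) = 0$ for all $n \geq 1$ and $i \in I$, contradicting $\bG^{(k)}(x,\bzero) > \bzero$. The main obstacle is precisely this $\lambda = 1$ subcase: Perron--Frobenius alone is inconclusive, and the contradiction must be built by propagating the vanishing of both the linear and the nonlinear parts of $G_i$ for $i \in I$ through the iterates of $\bG$.
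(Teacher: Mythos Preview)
Your argument is correct. Part (a) matches the paper's routine verification, and in part (b) your direct proof of (i) $\Leftrightarrow$ (iii) via the Perron left eigenvector is a clean addition (the paper only argues the cycle (i) $\Rightarrow$ (ii) $\Rightarrow$ (iii) $\Rightarrow$ (i)).

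The substantive difference lies in (ii) $\Rightarrow$ (iii). You work directly with $\bG$ and are forced into the case split $\lambda>1$ versus $\lambda=1$; the latter requires your structural analysis of the support set $I=\{i:v_i>0\}$, showing that $G_i(x,\by)=\sum_{\ell\in I}M_{i,\ell}y_\ell$ for $i\in I$ and then propagating $G^{(n)}_i(x,\bzero)=0$ by induction to contradict reducedness. The paper sidesteps this entirely with a single trick: it replaces $\bG$ by its $k$-fold iterate $\bG^{(k)}$, whose Jacobian at the origin is $M^k$. Since $\bT(x)=\bG^{(k)}(x,\bT(x))\unrhd \bG^{(k)}(x,\bzero)>\bzero$ by the reducedness hypothesis, the constant term $\bv\cdot\bG^{(k)}(x,\bzero)$ is automatically a nonzero non-negative series for \emph{any} nonzero $\bv\ge 0$, so the eigenvector identity immediately forces $\Lambda(M^k)<1$, hence $\Lambda(M)<1$, with no case analysis. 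Your route is more hands-on and shows explicitly how the $\lambda=1$ obstruction is tied to a purely linear invariant subsystem; the paper's iterate trick is shorter and uses reducedness in one stroke.
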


\begin{proof}
(a): Given that $I-J_\bG(0,\bzero)$ has a non-negative inverse, one can transform either 
of $\by =\bG(x,\by)$ and $\by = \widehat{\bG}(x,\by)$ into the other by simple operations
that preserve solutions. It is routine to check that $\widehat{\bG}(x,\by)$ is an elementary system.

(b): Assume $\bG^{(k)}(x,\bzero) > \bzero$. 
(i) $\Rightarrow$ (ii) follows from (a).
If (ii) holds then 
$$
 \bT(x)\   =  \  \bG^{(k)}\big(x,\bT(x)\big)
	  \  \unrhd\   \bG^{(k)}(x,\bzero)\   >  \  \bzero.
$$
Let $\bv\ge 0$ be a left eigenvector of 
$ \Lambda\big(J_{\bG^{(k)}}(0,\bzero)\big) $.
From
$$
   {\bT(x)}\   =  \  \bG^{(k)}(x,\bzero) \,+\, J_{\bG^{(k)}}(0,\bzero)\cdot{\bT(x)}\, +\, 
	\widetilde{\bH}\big(x,{\bT(x)}\big),
$$
one has
\begin{equation}\label{v dot}
	\bv\cdot\bT(x)\   =  \  \bv\cdot \bG^{(k)}(x,\bzero)\, +\,
 	 \Lambda\big(J_{\bG^{(k)}}(0,\bzero)\big)\cdot\bv\cdot \bT(x)\, + \,
	\bv\cdot\widetilde{\bH}\big(x,{\bT(x)}\big).
\end{equation}
Since $\bT(x) > \bzero$, one has 
$\bv\cdot\bT(x)$ 
and 
$
  \bv\cdot \bG^{(k)}(x,\bzero)+ \widetilde{\bH}\big(x,\bT(x)\big)
$ 
are non-zero power series with non-negative coefficients, 
consequently \eqref{v dot} implies
$
   \Lambda\big(J_{\bG^{(k)}}(0,\bzero)\big)  < 1.
$
From $J_{\bG^{(k)}}(0,\bzero) = J_{\bG}(0,\bzero)^k$ it follows that
$\Big(\Lambda\big(J_{\bG}(0,\bzero)\big)\Big)^k$ is an eigenvalue of 
$ J_{\bG^{(k)}}(0,\bzero)$, and thus also $<1$. But this clearly implies
$\Lambda\big(J_{\bG}(0,\bzero)\big) <1$, so (ii) $\Rightarrow$ (iii).

If (iii) holds, then by Neumann's expansion theorem  (see \cite{Or1987}, p.~201),
 one knows that
 $I-J_\bG(0,\bzero)$ has an inverse, and 
 $( I-J_\bG(0,\bzero)^{-1} = \sum_{n\ge 0} J_\bG(0,\bzero)^n$, a 
 non-negative matrix. Thus (iii) $\Rightarrow$ (i).
\end{proof}
%

The condition $\bG^{(k)}(x,\bzero) > \bzero$ is the norm for power series systems in
combinatorics since the $T_i(x)$ in the solution of $\by = \bG(x,\by)$ are generating
functions for non-empty classes $\cT_i$.

It turns out (but will not be proved here) that for the calculation of the 
$\fm_i$ and $\fq_i$ one can use the formulas \eqref{m formula2} and \eqref{q formula2}
of Theorem \ref{Thm PSeries} with the original system $\by = \bG(x,\by)$ as well
as with the derived system  $\by = \widehat{\bG}(x,\by)$. It can be useful to note that if 
the two hypotheses of Theorem \ref{Consis Thm} hold,
then the condition $\bG^{(k)}(x,\bzero) > \bzero$ is equivalent to requiring that
$\bG^{(j)}(x,\bzero) > \bzero$ hold for some $j$, $1\le j\le k$.
  
\begin{remark} 
The uniqueness of solutions $\bT(x)$ in $\bbR[[x]]_0^k$ for power series systems 
$\by  = \bG(x,\by)$ satisfying the two hypotheses of Theorem \ref{Consis Thm} does not 
in general carry over to 
the associated spectral systems $\bY = \bGamma(\bY)$
when $J_\bG(0,\bzero) \neq \bzero$. For example, consider the consistent single equation 
system $y=G(x,y)$ where $G(x,y) = x^2 +(1/2)y + xy$. The spectral system $Y = \gam(Y)$
is $Y = 2\cup Y \cup (1+Y)$, which has three solutions  $\bbN$, $1+\bbN$, and $2+\bbN$. 
The elementary system
$y = \widehat{G}(x,y)$ is $y = 2x^2 + 2xy$; its spectral system is $Y = 2 \cup (1+Y)$,
which has the unique solution $2+\bbN$.
\end{remark}

\subsection{A Non-linear Polynomial System}
The following simple example uses all the tools developed so far.
\begin{example} \label{blue and red}
 Consider the class $\cT$ of planar trees with blue and red colored nodes, defined by 
 the conditions: 
 \begin{thlist}
 \item [i] 
    every blue node that is not a leaf has exactly three subnodes, but not all 
    of the same color;
 \item [ii]
   and every red node that is not a leaf has exactly two subnodes. 
 \end{thlist}
 Let $\cB$ be the collection of trees in $\cT$ with the root colored blue, and likewise define
 $\cR$ for the red-colored roots. Then, letting $\bullet_B$ be a blue-colored node and
 $\bullet_R$ a red-colored node, one has the equational specification
\begin{eqnarray*}
   \cB & = &  \{\bullet_B\}\ \cup\ 
      \frac{\bullet_B} { \cB +\cR + \cR}\  \cup \ 
   \frac{\bullet_B} { \cR +\cB + \cR}\  \cup \ 
   \frac{\bullet_B} { \cR +\cR + \cB}\  \cup \ \\
      &&\qquad
   \frac{\bullet_B} { \cB +\cB + \cR}\  \cup \ 
   \frac{\bullet_B} { \cB +\cR + \cB}\  \cup \ 
   \frac{\bullet_B} { \cR +\cB + \cB}\\
    \cR & = &  \{\bullet_R\}\ \cup\ \frac{\bullet_R} {\cT + \cT}\\
    \cT&=&\cB + \cR.
   \end{eqnarray*}
The three generating functions, $B(x)$ for $\cB$,  $R(x)$ for $\cR$, and
$T(x)$ for  $\cT$, are related by the system of equations:
\begin{eqnarray*}
   B(x) & = &  x \  +\  3x \cdot  B(x) \cdot  R(x)^2\   + \ 3 x \cdot  B(x)^2 \cdot  R(x)\\
   R(x) & = & x\  +\  x \cdot  T(x)^2\\
   T(x) & = & B(x)\  +\  R(x).
\end{eqnarray*}
Thus  $(B(x),R(x),T(x))$ gives a solution for $(y_1,y_2,y_3)$ in the system of polynomial equations:
\begin{eqnarray*}
   y_1 & = &  x \  +\  3x \cdot  y_1 \cdot  y_2^2\   + \ 3 x \cdot  y_1^2 \cdot  y_2\\
   y_2 & = & x\  +\  x \cdot  y_3^2\\
  y_3 & = & y_1\  +\  y_2.
\end{eqnarray*}
The spectra $B,R,T$ are related by the set-equations
\begin{eqnarray*}
   B & = &  1 \  \cup \  (1 +  B +  2\star R)\   \cup \ (1 +  2\star B +  R)\\
  R & = & 1\  \cup\  (1 +  2\star T)\\
 T & = & B\  \cup\  R,
\end{eqnarray*}
so $(B,R,T)$ is a solution to the system of set-equations
\begin{eqnarray*}
   Y_1 & = &  1 \  \cup\  1 +  (Y_1 + 2\star Y_2)\   \cup \ (1 +  2\star Y_1 +  Y_2)\\
  Y_2 & = & 1\  \cup\  (1 + 2\star  Y_3)\\
 Y_3 & = & Y_1\  \cup\  Y_2.
\end{eqnarray*}

Next,
$$
\bG(x,y_1,y_2,y_3)\ =\ 
\left[
\begin{array}{c }
x \, +\, 3x \cdot  y_1 \cdot  y_2^2\,  + \,3 x \cdot  y_1^2 \cdot y_2\\
x\, +\, x \cdot  y_3^2\\
y_1\, +\, y_2
\end{array}
\right],
$$
so
$$
\bG^{(2)}(x,0,0,0)\ =\ 
\left[
\begin{array}{c }
 6x^4\, +\, x\\
 x\\
 2x
\end{array} 
\right]\ >\  \bzero.
$$
This implies $\bG^{(k)}(x,\bzero) > \bzero$, where $k=3$.

The Jacobian matrix $J_\bG(x,\by)$  is
$$
J_\bG(x,y_1,y_2,y_3)\ =\ 
\left[
\begin{array}{c  c  c}
3xy_2^2\,+\,6xy_1y_2 & 6xy_1y_2\,+\,3xy_1^2 & 0\\
0 &0 &2xy_3\\
1 &1 &0
\end{array}
\right]
$$
so 
$$
J_\bG(x,0,0,0)\ =\ 
\left[
\begin{array}{c  c  c}
0 & 0 & 0\\
0 &0 &0\\
1 &1 &0
\end{array}
\right].
$$
The eigenvalues of $J_\bG(0,0,0,0)$ are the roots
of 
$\det\big(\lambda I - J_\bG(0,0,0,0)\big) = 0$, 
that is, 
$\lambda^3 =0$. Thus $\Lambda\big(J_\bG(0,0,0,0)\big) = 0 < 1$, 
so the system 
$\by =\bG(x,\by)$ has a solution $\bT(x)\in\bbR[[x]]_0^3$, and the solution is
$>\bzero$
.
The inverse of $I - J_\bG(0,0,0,0)$ is a non-negative matrix:
$$
\big(I - J_\bG(0,0,0,0)\big)^{-1}\ =\ 
\left[
\begin{array}{c  c  c}
1 & 0 & 0\\
0 &1 &0\\
1 &1 &1
\end{array}
\right].  
$$
Thus 
$$
\widehat{\bG}(x,\by)\ =\ 
\left[
\begin{array}{c }
x \,+\, 3 x y_1 y_2^2 \,+\, 3 x y_1^2 y_2\\
x \,+\, x y_3^2\\
2 x \,+\, 3 x y_1 y_2^2 \,+\, 3 x y_1^2 y_2 \,+\, xy_3^2 \,+\, y_1 \,+\, y_2
\end{array}
\right].  
$$

The spectral system $\bY = \widehat{\bGamma}(\bY)$ is
 \begin{eqnarray*}
 Y_1 &=& 1 \cup \big(1  \,+\,  Y_1  \,+\,  2\star Y_2\big) \cup \big(1  \,+\,  2\star Y_1  \,+\,  Y_2\big)\\
 Y_2 &=& 1 \,\cup\, \big(1  \,+\,   2\star Y_3\big)\\
 Y_3 &=&  1\,\cup\, \big(1  \,+\,   Y_1   \,+\,  2\star Y_2\big) \,\cup\, 
 \big(1  \,+\,  2\star Y_1  \,+\,  Y_2\big) \,\cup\, \big(1  \,+\,    2\star Y_3\big)
 \cup Y_1 \cup Y_2
 \end{eqnarray*}
\end{example}
%
 
%
\section{General Systems} \label{Gen Sys}
Recall that
\begin{eqnarray*}
   \dom_0[x] &=& \big\{A(x) \in\bbR[x] : A(0) = 0, A(x)  \unrhd 0\big\}\\
   \dom_0[x,\by] &=& \big\{\bG(x,\by)  \in\bbR[x]^k : \bG(x,\by)\unrhd \bzero, 
   \bG(0,\bzero)= \bzero,  J_\bG(0,\bzero)= \bzero\big\} .
\end{eqnarray*}
The systems $\by = \bG(x,\by)$ considered so far are power-series systems. 
However these are not adequate to capture the scope of the popular constructions 
such as $\MSet$ (multiset) and $\Cycle$ used in combinatorial specifications---in particular
one needs $\MSet$ in the study of monadic second--order classes in $\S$\ref{MSO Sec}. 

If $\cA$ and $\cB$ are two combinatorial classes with the same generating function, 
that is, $A(x)=B(x)$, then $\MSet(\cA)$ and $\MSet(\cB)$ have the same generating 
function; 
likewise for the construction $\Cycle$. Such constructions are called \textit{admissible} in
Flajolet and Sedgewick \cite{FlSe2009}.
In the case of $\MSet$, the generating function for $\MSet(\cA)$ is
$$
\exp\Big(\sum_{m\ge 1} A(x^m)/m\Big) - 1.
$$
Ordinary generating functions $A(x)$ have integer coefficients;
the operator $\MSet$ is extended to all $A(x)\in\dom_0[x]$
 by the same expression:
$$
\MSet(A(x))\ :=  \  \exp\Big(\sum_{m\ge 1} A(x^m)/m\Big) - 1.
$$
This operator cannot be expressed by a power series in $A(x)$, so specifications using 
$\MSet$ do not, in general, lead to elementary systems.

The operations and constructions/operators considered here are 
(see \cite{FlSe2009} or \cite{BBY2006}):
\begin{thlist}
\item
the constant $\bullet$ (a single node) corresponds to the polynomial $x$ in 
generating functions

\item
the construction $union$ (disjoint union) corresponds to the operation of $+$ (addition)  
for generating functions

\item
the construction $+$ (disjoint sum) corresponds to the operation $\times$ (product)  for 
generating functions

\item
the construction/operator $\Seq$ (sequence)

\item
the construction/operator $\MSet$ (multiset)

\item
the construction/operator $\Cycle$ (cycle)

\item
the construction/operator $\DCycle$ (directed cycle)
\end{thlist}
Items (d)--(g) are called the \emph{standard} constructions.
A standard construction $\Theta$ can be restricted to a set of positive integers $J$, giving the 
construction $\Theta_J$, the meaning of which is that $\Theta_J(\cA)$ consists of all objects 
that one can construct by applying $\Theta$ to only $J$-many objects from $\cA$ (repeats allowed).
Thus $\MSet_{\sf Even}(\cA)$ gives all multisets consisting of an even number of objects from $\cA$.  
The operators $J\star $, for $J \subseteq \bbN$, are precisely the operators $\MSet_J$, so 
the star operation ($\star$) is included in the above list.

\begin{definition}
Let $\cC$ be the collection of combinatorial classes. A construction
$${\Theta} :\cC^k \rightarrow \cC^m$$
is admissible iff:
\begin{quote}
whenever two $k$-tuples of combinatorial classes $\bcA$ and $\bcB$  
have the 
 same $k$-tuples of generating functions $\bA(x)$ and $\bB(x)$ then the $m$-tuple of
 combinatorial classes ${\bTheta}(\bcA)$ and 
 ${\bTheta}(\bcB)$
 also have the same $n$-tuples of generating functions.
 \end{quote}
 The operator from $\dom_0[x]^k$ to $\dom_0[x]^m$ induced by such a construction 
 is also
 designated by $\bTheta$.
 \end{definition}
 
 A variant of this definition is needed for the study of spectra of solutions to systems of
 equations.
 
\begin{definition}
An operator
$$\bTheta :\dom_0[x]^k \rightarrow \dom_0[x]^m,$$
is {\em spectrally admissible} provided:
\begin{quote}
whenever two $k$-tuples $\bA(x)$ and $\bB(x)$ from $\dom_0[x]^k$ 
have the same spectra, that is, $\bA = \bB$, then  
 $\bTheta(\bA(x))$ 
 and 
 $\bTheta\big(\bB(x)\big)$
 also have the 
 same  spectra, that is,
  $\Spec\Big(\bTheta\big(\bA(x)\big)\Big) = 
\Spec\Big(\bTheta\big(\bB(x)\big)\Big).$
 \end{quote}
 The operator from $\Su(\bbN)^k$ to $\Su(\bbN)^m$, where  $\Su(\bbN)$  is the set of subsets of $\bbN$, induced by a spectrally admissible
operator $\bTheta$ is 
 designated by $\bGamma_\bTheta$.
 \end{definition} 
 
 \begin{lemma}
Each $\bG(x,\by)\in\dom_{J0}[x,\by]^k$ defines an operator 
  on $\dom_0[x]^k$  that is both admissible and spectrally admissible. 
  Such operators are called {\em elementary operators}.
  As a spectrally admissible operator, $\bG(x,\by)$
induces a set-operator $($on $\Su(\bbN)^k$, the set of $k$-tuples of
subsets of $\bbN$$)$, namely
 $$
   \bGamma : \bA\mapsto \bigcup_{\bu\in\bbN^k} \bG_{i,\bu} + (\bu\star \bA).
 $$
 \end{lemma}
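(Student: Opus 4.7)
The plan is to verify in turn that $\bG(x,\by)$ yields a well-defined operator on $\dom_0[x]^k$, that this operator is admissible, that it is spectrally admissible, and that the induced set-operator has the displayed form.

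First I would check well-definedness of the composition. Expand each component as $G_i(x,\by) = \sum_{\bu \in \bbN^k} G_{i,\bu}(x) \by^\bu$ with $G_{i,\bu}(x) \in \dom[x]$, and note that for $\bA(x) \in \dom_0[x]^k$ every $A_j(x)$ vanishes at $0$, so the monomial $\bA(x)^\bu := A_1(x)^{u_1}\cdots A_k(x)^{u_k}$ has order in $x$ at least $|\bu| := u_1+\cdots+u_k$. Consequently the formal sum $\sum_\bu G_{i,\bu}(x)\bA(x)^\bu$ is locally finite in each degree and defines a power series in $\bbR[[x]]$, which is non-negative (as a sum of products of non-negative series) and vanishes at $0$ (since $\bG(0,\bzero)=\bzero$); hence it lies in $\dom_0[x]^k$. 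Admissibility is then a tautology: the value $\bG(x,\bA(x))$ only depends on $\bA(x)$ as a tuple of power series.

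For spectral admissibility, the key step is Lemma \ref{spec hom}, which says that on non-negative series $\Spec$ respects addition (as union), multiplication (as set-addition), and powers $B(x)^n$ (as $n\star B$). Applying these coordinate-wise, one computes
\begin{eqnarray*}
\Spec\big(G_i(x,\bA(x))\big)
&=& \Spec\Big(\sum_\bu G_{i,\bu}(x)\,A_1(x)^{u_1}\cdots A_k(x)^{u_k}\Big)\\
&=& \bigcup_\bu \Big(G_{i,\bu} + u_1\star A_1 + \cdots + u_k\star A_k\Big)\\
&=& \bigcup_\bu \big(G_{i,\bu} + \bu\star\bA\big).
\end{eqnarray*}
Since the right side depends only on the spectra $\bA = \Spec(\bA(x))$, two tuples of power series with the same spectra are sent to $k$-tuples with the same spectra; this is spectral admissibility, and the displayed chain of equalities also identifies the induced set-operator $\bGamma$ with the stated formula.

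The only non-routine ingredient is extending Lemma \ref{spec hom}(c) from finite to countably infinite sums in the displacement from the first line to the second. This is legitimate precisely because non-negativity of $G_{i,\bu}(x)$ and $\bA(x)^\bu$ forbids cancellation between terms, and the local-finiteness established in the first step guarantees that for each $n$ only finitely many $\bu$ contribute to the coefficient $[x^n]$; thus $\Spec$ of the sum is the union of the individual spectra. I expect this mild checking of infinite distributivity of $\Spec$ to be the main (and only) obstacle; all other steps are direct consequences of Lemma \ref{spec hom} and the definitions.
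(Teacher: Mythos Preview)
Your proposal is correct and follows exactly the approach implicit in the paper. In fact, the paper gives no proof of this lemma at all: it is stated and then immediately followed by the next definition, evidently regarded as a direct consequence of Lemma~\ref{spec hom} together with the definitions of $\dom_{J0}[x,\by]$ and of admissibility and spectral admissibility. Your write-up simply spells out those details, including the local-finiteness check needed to extend Lemma~\ref{spec hom}(c) to the infinite sum $\sum_\bu G_{i,\bu}(x)\bA(x)^\bu$; that is the only point requiring any care, and you handle it correctly.
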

 %

\begin{definition}
Two spectrally admissible operators $\bTheta$ and $\bTheta'$ on $\dom_0[x]^k$ 
are {\em spectrally equivalent} if 
they give the same set-operator, that is, for all $\bA(x)\in\dom_0[x]^k$,
$$
\Spec\big(\bTheta\big(\bA(x)\big) \big)\   =  \  \Spec\big(\bTheta'\big(\bA(x)\big)\big).
$$
\end{definition}

  The standard admissible operators (and their restrictions) map $\dom_0[x]$ to itself, 
 hence $k=m=1$ in such cases. However the elementary operators require that one 
 take arbitrary $k\ge 1$ into consideration.
 
 In addition to the (restrictions of the) standard constructions $\Theta$ being admissible, they are
 spectrally admissible. 
A simplifying feature of working with spectrally admissible operators is that they can often be
better understood by replacing them with equivalent elementary operators.

\begin{theorem}[Systems based on Spectrally Admissible Operators] \label{gen spec thm}
\begin{thlist}
\item
Elementary operators $\bG(x,\by)$ and restrictions $\Theta_J$ of the standard operators $\Theta$ are 
spectrally admissible.

\item
The restriction $\Theta_J(y)$ of a standard operator $\Theta$ is spectrally equivalent 
to the elementary operator
$\displaystyle \sum_{j\in J} y^j$, and $\Spec\big(\Theta_J\big(A(x)\big)\big) = J\star A$.

\item
The sum $\Theta_1 + \Theta_2$, product $\Theta_1 \cdot \Theta_2$ and composition 
$\Theta_1 \circ \Theta_2$ of spectrally admissible 
operators is spectrally admissible.

\item
Any combination of elementary operators and restrictions of standard operators --- 
using the operations
of sum, product and composition --- yields an operator that is spectrally admissible
and spectrally equivalent to an elementary operator.

\item
If $\bTheta(\by)$ is spectrally equivalent to $\bTheta'(\by)$ then 
$$
\Spec\big(\bTheta^{(\infty)}(\bempty)\big)\   =  \  \Spec\big(\bTheta'^{(\infty)}(\bempty)\big).
$$

\item \label{ops}
Let $\by = \bTheta(\by)$ be a system with solution $\bT(x)\in\dom_0[x]^k$,
 where the operators 
$\Theta_i$ are combinations as described in item {\em (d)}.  
By {\em (d)},  $\bTheta$ is spectrally equivalent to  an elementary operator 
   $\bG(x,\by)$. 
   Let $\bU(x)$ be the unique solution to  $\by = \bG(x,\by)$ guaranteed by 
   Theorem \ref{Thm PSeries}.
Then $\bT = \bU$. 

Thus periodicity properties for the $T_i(x)$ can be deduced by applying
Theorem \ref{Thm PSeries} to  $\by = \bG(x,\by)$. 
\end{thlist}
\end{theorem}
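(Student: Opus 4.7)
The plan is to establish the six parts in order, with the combinatorial content concentrated in (a) and (b) and the remaining parts following as formal consequences of Lemma \ref{spec hom} and Theorem \ref{ThmSetEqn}. For an elementary operator $\bG(x,\by)=\sum_{\bu}\bG_\bu(x)\by^\bu$, Lemma \ref{spec hom} gives $\Spec(\bG(x,\bA(x)))=\bigcup_\bu(\bG_\bu+\bu\star\Spec(\bA(x)))$, which depends only on $\Spec(\bA(x))$; this handles the elementary case of (a). For the restriction $\Theta_J$ of a standard construction, the key combinatorial fact is that any object in $\Theta_J(\cA)$ is built from $j$ objects of $\cA$ for some $j\in J$ and has total size equal to the sum of the constituent sizes; hence $\Spec(\Theta_J(A(x)))=\bigcup_{j\in J} j\star A=J\star A$, depending only on $A$. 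This completes (a), and combined with the computation $\Spec(\sum_{j\in J}A(x)^j)=J\star A$ from Lemma \ref{spec hom} it also gives (b).

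For (c), each of $\Spec\bigl((\bTheta_1+\bTheta_2)(\bA(x))\bigr)$, $\Spec\bigl((\bTheta_1\cdot\bTheta_2)(\bA(x))\bigr)$, and $\Spec\bigl((\bTheta_1\circ\bTheta_2)(\bA(x))\bigr)$ can be rewritten via Lemma \ref{spec hom} in terms of the $\Spec(\bTheta_i(\bA(x)))$, which by spectral admissibility depend only on $\Spec(\bA(x))$. Part (d) is then a structural induction on the combination: the base cases come from (a) and (b); the inductive step preserves spectral admissibility by (c); and spectral equivalence to an elementary operator propagates through sums, products, and compositions, since elementary operators are themselves closed under these operations and the equivalences chain through spectral admissibility (for composition, given $\bTheta_i$ spectrally equivalent to elementary $\bG_i$, one has $\Spec(\bTheta_1(\bTheta_2(\bA(x))))=\Spec(\bTheta_1(\bG_2(\bA(x))))=\Spec(\bG_1(\bG_2(\bA(x))))$, the first step using spectral admissibility of $\bTheta_1$ together with $\bTheta_2\equiv\bG_2$, the second using $\bTheta_1\equiv\bG_1$). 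Part (e) is an induction on $n$ proving $\Spec(\bTheta^{(n)}(\bempty))=\Spec(\bTheta'^{(n)}(\bempty))$: the step rewrites the outer operator via spectral equivalence and then invokes spectral admissibility of $\bTheta'$ together with the induction hypothesis on the inner argument; continuity of $\Spec$ in the natural metric on $\dom_0[x]^k$ transfers the equality to the limits.

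For (f), apply (d) to produce an elementary $\bG(x,\by)$ spectrally equivalent to $\bTheta$, and let $\bU(x)$ be its unique solution in $\dom_0[x]^k$ from Theorem \ref{Thm PSeries}, with spectrum $\bU$ solving the associated elementary set-equation $\bY=\bGamma(\bY)$, $\bGamma(\bY):=\bigcup_\bu \bG_\bu+\bu\star\bY$. Taking $\Spec$ of the identity $\bT(x)=\bTheta(\bT(x))$ and applying spectral equivalence gives $\bT=\Spec(\bG(x,\bT(x)))=\bGamma(\bT)$, so $\bT$ is also a solution of $\bY=\bGamma(\bY)$. Since $\bT(x)\in\dom_0[x]^k$ forces $0\notin T_i$, one has $\bT\in\Su(\bbP)^k$, and the uniqueness clause of Theorem \ref{ThmSetEqn}\,(a) yields $\bT=\bU$, so the periodicity conclusions for $\bU$ coming from Theorem \ref{Thm PSeries} transfer to $\bT$.

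The main obstacle is the combinatorial content of (b) for $\MSet$, $\Cycle$, and $\DCycle$: at the power-series level these act on $A(x)$ through expressions involving $A(x^m)$ for $m>1$, so they are not formally equal to $\sum_{j\in J}y^j$ applied to $A(x)$. One must argue combinatorially that the $m>1$ contributions produce no spectrum elements outside $J\star A$, which rests on the fact that the total size of a multiset, undirected cycle, or directed cycle of $j$ objects equals the sum of the sizes of its constituents regardless of how the $A(x^m)$ terms account analytically for repetitions. Once this is granted, the rest is a formal combination of Lemma \ref{spec hom}, spectral admissibility, and Theorem \ref{ThmSetEqn}.
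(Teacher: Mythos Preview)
Your proposal is correct and follows essentially the same route as the paper: items (a)--(e) are treated as routine consequences of Lemma \ref{spec hom} and the definitions, and item (f) is proved by showing that both $\bT$ and $\bU$ satisfy the elementary set-system $\bY=\bGamma(\bY)$ and then invoking the uniqueness clause of Theorem \ref{ThmSetEqn}. The paper compresses (a)--(e) into the single word ``straightforward'', whereas you spell out the induction for (d) and (e) and flag the combinatorial point behind (b) for $\MSet$ and the cycle operators; but the logical skeleton and the decisive uniqueness argument in (f) are identical.
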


\begin{proof}
Items (a) through (e) are straightforward.
For item (f), the operators $\Theta_i(\bY)$ are spectrally equivalent to an
elementary operator by (d).
From the spectral equivalence of the operators $\bTheta(\by)$ and $\bG(x,\by)$
and the fact that $\bT(x)$ is a solution of $ \by = \bTheta(\by)$, one has
$$
\bT\   =  \  \Spec\big(\bTheta\big(\bT(x)\big)\big)\   =  
\  \Spec\big(\bG\big(x,\bT(x)\big)\big)\   =  \  \bGamma(\bT),
$$
where $\bgam$ is the set operator corresponding to $\bG(x,\by)$.
So $\bT$ is a solution of 
$
\bY = {\bGamma}\big(\bY\big).
$
Now $\bU(x) = \bG(\bU(x))$ implies that $\bU$ is also a solution of
$
\bY = {\bGamma}\big(\bY\big).
$
Theorem \ref{ThmSetEqn} says that  the elementary system $\bY = \bGamma(\bY)$ has a 
unique solution, so
 $\bU =  \bT$.
 Consequently the periodicity properties of $\bT$ are 
those of $\bU$, and thus Theorem \ref{Thm PSeries} can be used to
analyze $\bT$.
\end{proof}

The next example illustrates the methods for determining the periodicity parameters
for the power series solution $\bT(x)$ of a general equational system $\by = \bTheta(\by)$
using a specification of  `structured' trees,\footnote
{ This additional structure on the tree can be viewed as a way of embedding a tree in 
3-space, so that a node that covers a cycle of nodes `looks' rather like a chandelier; 
perhaps one would prefer to consider the structure to be maintained by the 
legendary substance called quintessence that fixed the stars in the ancient heavens ---  it was invisible, weightless, etc.}
 where (some or all of) the nodes immediately below a node can be given a structure,
such as a cycle or a sequence.
\begin{example}
Let $\cT$ be the class of two-colored $($red,blue$)$ `structured' trees which satisfies 
the following conditions:
\begin{thlist}
\item
A red node must have a cycle consisting of a positive even number of red nodes,
or 6 blue nodes, at least 3 of the blue nodes being leaves,
 immediately below it;
\item
A blue node that is not a leaf has a multiset consisting 
of a prime number of 
red nodes immediately below it, plus a sequence of  blue nodes 
whose number is  congruent to 4 mod 6.
\end{thlist}
Letting $\cR$ be the members of $\cT$ with a red root, and $\cB$ those with a blue root, one has
the specification
\begin{eqnarray*}
\cR&=&\frac{\bullet_R}{ \Cycle_\EvenSquares (\cR )}\ \cup 
\ \frac{\bullet_R}{3 \bullet_B\ + \  \MSet_3(\cB)}\\
\cB&=&\{\bullet_B\}\ \cup\ \frac{\bullet_B}
{ \MSet_{\Primes}(\cR) \ +\ \Seq_{4 + 6\cdot \bbN}( \cB)}\\
\cT&=&\cR \cup \cB.
\end{eqnarray*}
The associated spectral system is
\begin{eqnarray*}
Y_1&=&(1 + \EvenSquares \star Y_1)\  \cup\  (4+ 3\star Y_2)\\
Y_2&=&1 \cup \Big(1+ \Primes\star Y_1 + (4 + 6\cdot \bbN)\star Y_2\Big)\\
Y_3&=&Y_1 \cup Y_2
\end{eqnarray*}
with solution $(Y_1,Y_2,Y_3) = (R,B,T)$.
This is not an elementary system $($because of the linear terms in the right side of 
the third equation$)$,
but nonetheless the solution is unique.
Note that $\{1,2\}$ is a strong component of the dependency digraph.

To determine the periodicity parameters for $\cT$ it suffices to determine 
them for $\cB$ and $\cR$ 
and apply Proposition \ref{Karen's Table}, since $\fm(T) = \fm(R\cup B)$ and
$\fq(T) = \fq(R\cup B)$. 
The  first two equations form
an elementary system, and one has: 
\begin{eqnarray*}
\bGamma(Y_1,Y_2)&=&
\left(
\begin{array}{c c}
(1 + \EvenSquares \star Y_1)\  \cup\  (4+ 3\star Y_2)\\
1 \cup \Big(1+ \Primes \star Y_1 + (4 + 6\cdot \bbN)\star Y_2\Big)
\end{array}
\right)\\
\bGamma(\O,\O)&=&
\left(
\begin{array}{c c}
\O\\
1
\end{array}
\right)\\
\bGamma^{(2)}(\O,\O)&=&
\left(
\begin{array}{c c}
7 \\
1
\end{array}
\right).
\end{eqnarray*}
Thus $\bfm := (\fm_1,\fm_2) = (7,1)$.

Writing 
$$
\Gamma_i(Y_1,Y_2) = \bigcup_{\bu\in\bbN^2} G_{i,\bu} + (u_1\star Y_1 + u_2\star Y_2)
$$
one has
\begin{eqnarray*}
G_{1,\bu}&=&
\begin{cases}
1&\text{if } u_1  \in \EvenSquares \text{ and }u_2=0\\
4&\text{if }u_1=0 \text{ and } u_2 = 3\\
\O&\text{otherwise}
\end{cases}\\
G_{2,\bu}&=&
\begin{cases}
1&\text{if }(u_1=u_2=0) \text{ or } (u_1 \in \Primes \text{ and }  u_2 \equiv 4 \text{ mod } 6)  \\
\O&\text{otherwise}.
\end{cases}
\end{eqnarray*}
Now $\bu\star\bfm = 7u_1 + u_2$, so
\begin{eqnarray*}
G_{1,\bu} + \bu\star\bfm - \fm_1&=&
\begin{cases}
7u_1 + u_2 - 6 &\text{if }  (u_1  \in \EvenSquares \text{ and }u_2=0) \\
7u_1 + u_2 - 3 &\text{if } u_1=0 \text{ and } u_2 = 3\\
\O&\text{otherwise}
\end{cases}\\
G_{2,\bu}+ \bu\star\bfm - \fm_2&=&
\begin{cases}
 7u_1+u_2 &\text{if }(u_1=u_2=0) \text{ or } (u_1 \in \Primes 
 \text{ and }  u_2 \equiv 4 \text{ mod } 6)\\
\O&\text{otherwise}.
\end{cases}
\end{eqnarray*}
From this one has
\begin{eqnarray*}
\gcd\bigcup_\bu\big(G_{1,\bu} + \bu\star\bfm - \fm_1\big)&=&
\gcd\{7u_1  - 6:  u_1  \in \EvenSquares \} \ =\ 2\\
\gcd\bigcup_\bu\big(G_{2,\bu}+ \bu\star\bfm - \fm_2\big)&=&
 \gcd\{7u_1 +u_2 : u_1 \in \Primes \text{ and }  u_2 \equiv 4 \text{ mod } 6\}\ =\ 1\\
\end{eqnarray*}
Since the two equation system is irreducible, that is, $i\rightarrow^+j$ for all vertices $i,j$,
one has
\begin{eqnarray*}
\fq_1&=&\fq_2\ =\  \gcd \bigcup_\bu \bigcup_{i=1}^2 G_{i,\bu} + \bu\star\bfm - \fm_i \\
&=&\gcd(2,1)\ =\ 1.
\end{eqnarray*}
Using Proposition \ref{Karen's Table},
the above calculations give 
\begin{eqnarray*}
\fm_3 &=&\min(\fm_1,\fm_2 )\ =\  \min(7,1)\ =\ 1\\ 
\fq_3 &=& \gcd(\fq_1,\fq_2,\fm_1 - \fm_2)\  =\  \gcd(1,1,6)\  =\  1.
\end{eqnarray*}
In summary, $(\fm(R),\fm(B), \fm(T)) = (7,1,1)$ and 
$(\fq(R),\fq(B), \fq(T)) = (1,1,1)$.
\end{example}

%
\section{Monadic Second Order Classes} \label{MSO Sec}
At present there are two major approaches to describing broad collections of
combinatorial structures: 
(1) combinatorialists (see, for example, \cite{FlSe2009}) prefer to look at \textit{specifications} 
that are based on constructions like \textit{sequences},  \textit{cycles} and \textit{multisets},
 whereas 
(2) logicians prefer to look at classes that are defined by \textit{sentences in a formal logic}.

When working with relational structures like graphs and trees, logicians have found it 
worthwhile to strengthen first-order logic to {\em monadic second-order logic}
(MSO logic).\footnote
	{This is just first-order  logic  augmented with unary predicates $U$ as variables  --- 
	 this means that one can quantify over subsets as well as individual elements, 
	 and say that an element belongs to a subset. 
	The fact that the  $U$ are predicates and not domain  elements make the logic 
	\textit{second-order}, and the fact that these predicates have only one argument (e.g., 
	$U(x)$) makes the  logic \textit{monadic}.}
The  primary reason for the interest in MSO logic is the powerful connection between 
Ehrenfeucht-Fra\"{i}ss\'{e} games and sentences of a given quanifier rank.\footnote
	{The connection with Ehrenfeucht-Fra\"{i}ss\'{e} games fails if one has quantification over 
	more general relations, like binary relations.} 
These games, although very combinatorial in 
nature, are not widely used in the combinatorics community.

\subsection{Regular Languages}
A set $\cL$ of words over an $m$-letter alphabet is a {\em regular language} if it is precisely the
set of words accepted by some finite state deterministic automaton. A word is accepted by 
such an automaton if, starting at state 0, one can follow a path to a final state with the successive
edges of the path spelling out the word. Let the states of the automaton be $S_0,\ldots, S_k$, and for 
each state $S_i$ let $\cL_i$ be the set of words traversed when going from vertex $i$ to a final state vertex. 
Then one sees that $\cL_i$ is the union of the classes $a_{ij}\cL_j$ where $i\rightarrow j$ 
is an edge in the automaton labeled by the letter $a_{ij}$ from the alphabet. 
This leads to equations 
of a particularly simple form for the generating functions and the spectra, namely 
for $1 \le i \le k$,
\begin{eqnarray*}
L_i(x)&=&x\cdot\big(c_i + \sum_{i\rightarrow j} L_j(x) \big)\\
L_i&=&A_i \cup \big(1 + \bigcup_{i\rightarrow j} L_j\big) .
\end{eqnarray*}
One of the first big successes for MSO was B\"uchi's Theorem connecting the regular languages
studied by computer scientists with classes of colored digraphs defined by MSO sentences. 
To see how this connection is made,
simply note that a word on $m$ letters corresponds to an $m$-colored linear digraph 
 $(D,\rightarrow,C_1,\ldots,C_m)$,
  and thus a language on an $m$-letter alphabet can be thought of as a class of $m$-colored 
  linear digraphs.
 
 \begin{theorem}[B\"uchi \cite{Buchi1960}, 1960]  
 MSO  classes of colored linear digraphs  are precisely the  regular  languages.
 \end{theorem}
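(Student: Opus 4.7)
\medskip

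The plan is to prove the two directions separately. For the easy direction, that every regular language is MSO-definable, I would start from a deterministic finite automaton $\cM$ with states $S_0,\ldots,S_k$ over an $m$-letter alphabet and transition function $\delta$. Given a colored linear digraph $(D,\rightarrow,C_1,\ldots,C_m)$ representing a word $w$, I would write an MSO sentence $\varphi_\cM$ of the form $\exists X_0\cdots\exists X_k\,\psi$ where each unary predicate $X_i$ is intended to mark the positions at which $\cM$ is in state $S_i$ after reading the corresponding prefix. The first-order part $\psi$ asserts: the $X_i$ partition the vertex set; the first vertex lies in $X_{\delta(S_0,c)}$ where $c$ is its color; every $\rightarrow$-edge respects $\delta$ in the sense that if $u\rightarrow v$, $u\in X_i$, and $v$ has color $c$, then $v\in X_{\delta(S_i,c)}$; and the last vertex lies in $X_j$ for some $j$ with $S_j$ a final state. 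All these clauses are first-order, so $\varphi_\cM$ is genuinely MSO, and by construction its models over linear digraphs are exactly the words accepted by $\cM$.

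For the harder direction, that every MSO-definable class is regular, I would proceed by induction on the structure of the MSO formula. The key idea is to enlarge the alphabet to track the values of the free set variables: for a formula $\varphi(X_1,\ldots,X_n)$ with free set variables, I build a DFA on the alphabet $\Sigma \times \{0,1\}^n$, where the extra $n$ bits at position $i$ encode whether $i$ lies in $X_1,\ldots,X_n$. Atomic formulas like $X \subseteq Y$, $x\in X$, or the edge relation on a linear digraph are easily captured by small DFAs over this expanded alphabet. Boolean connectives correspond to intersection, union, and complementation of regular languages, which are known to be regular (using product constructions and the fact that DFAs are closed under complement). First-order existential quantifiers $\exists x$ become set quantifiers $\exists X$ restricted to singletons, so it suffices to handle $\exists X$.

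The main obstacle, and the heart of the proof, is handling set quantification. For $\exists X\,\varphi(X,X_1,\ldots,X_n)$, I would take the DFA recognizing $\varphi$ over $\Sigma\times\{0,1\}^{n+1}$ and \emph{project away} the bit corresponding to $X$: given a letter in $\Sigma\times\{0,1\}^n$ a transition is allowed if it was allowed for either value of the projected bit. This produces a nondeterministic finite automaton (NFA), and then the subset construction yields an equivalent DFA over $\Sigma\times\{0,1\}^n$; hence the projected language is regular. Iterating through the formula, one finally obtains a DFA over $\Sigma$ for the sentence $\varphi$ (no free variables), which accepts exactly the words whose linear-digraph representations model $\varphi$. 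Combining the two directions gives the theorem. The subtlety to watch is the treatment of the empty word and of boundary conditions at the first and last vertices, which must be encoded uniformly so that induction on formula structure goes through without case splits.
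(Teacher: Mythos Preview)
Your proof sketch is the standard textbook argument for B\"uchi's theorem and is correct in outline: encoding the run of a DFA via existentially quantified set variables for one direction, and structural induction on MSO formulas with projection plus the subset construction for the other. There are no genuine gaps; the only places needing care are exactly the ones you flag (empty words, endpoints, and the reduction of first-order quantifiers to singleton set quantifiers).

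However, there is nothing to compare against: the paper does \emph{not} prove this theorem. It is stated as a cited classical result of B\"uchi~\cite{Buchi1960} and immediately followed by a discussion of Berstel's work on generating functions of regular languages. The paper's own contributions concern spectra and periodicity for systems of equations and for MSO classes of trees and forests (via Compton's specification), not a re-proof of B\"uchi's characterization. So your proposal is fine as a proof of the stated theorem, but it is supplying something the paper deliberately omits rather than paralleling or diverging from an argument in the paper.
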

 
 The theory of the generating functions for MSO classes of colored linear 
 digraphs was worked out, in 
 the context of regular languages, by Berstel \cite{Berstel1971}, 1971
 (his results were soon augmented by Soittola \cite{Soittola1976}, 1976). 
 Given a regular language $\cR$, one can partition it into classes $\cR_i$
such that the generating functions $R_i(x)$ satisfy
a system  of  linear equations
$\by = x(C + M\cdot \by)$, where $C$ is a 0,1-column matrix,
and $M$ is a 0,1-square matrix.
The equations are easily read off a finite state deterministic automata that 
accepts the language; one writes down a
 system of equations for the paths in the automata, similar
 to the situation in Example \ref{paths}. The equations have a particularly 
 simple linear form---the spectra $R_i$ are eventually periodic, and by Cramer's rule,
the generating functions $R_i(x)$ are rational functions; also they 
are given by
$
\bR(x) = x\cdot \big(I - xM\big)^{-1}\cdot C.
$
 Berstel showed that
  each $R_i(x)$ decomposes into a finite number of  $R_{ij}(x)$, 
  each $R_{ij}$ being either finite or eventually an arithmetical progression. For those
  which are not finite
 there are polynomials $P_{ijk}(n)$ and complex numbers
 $\beta_{ijk} = \beta_{ij}\cdot \omega_{ij}^k$, with $\beta_{ij}$ a positive real and
 $\omega_{ij}$ a root of unity, such that,
on the set $R_{ij}$, one has the coefficients $r_{ij}(n)$
having an exact  polynomial-exponential form, and polynomial-exponential 
asymptotics, given by (see \cite{FlSe2009}, p. 302):
\begin{eqnarray*}\label{Berstel}
r_{ij}(n)& = &  \sum_k P_{ijk}(n) \beta_{ijk}^n\quad\text{on the set } R_{ij} \\
&\sim&P_{ij0}(n) \beta_{ij}^n\quad\text{on the set } R_{ij} .
\end{eqnarray*}

This study of the generating functions for MSO classes of colored 
linear  digraphs provides {\em the Berstel Paradigm}, a successful
analysis that one would like to see paralleled in the study of all 
MSO classes of colored trees. For example, can one show that the generating 
functions $T(x)$ of such classes decompose into a 
 polynomial and finitely many ``nice'' functions $T_i(x)$, with each spectrum
 $T_i$ being an arithmetical progression?

\subsection{Trees and Forests}

When speaking of structures, in particular the models of a sentence $\varphi$, 
it will be understood that only finite structures are being considered.

A \textit{tree} $\bT = (T,<)$ is a poset such that:  
(i) there is a unique maximal element $\rt(\bT)$ 
called the \textit{root} of the tree, and
(ii) every interval $[a,\rt(\bT)]$ is linear. 
A \textit{forest} $\bF = (F,<)$ is a poset whose components are trees.

A forest $\bF$ is determined (up to isomorphism) by the number 
of each (isomorphism type of) tree appearing in it, thus by its
counting function $\nu_\bF : \TREES \rightarrow \bbN$.

One can combine two forests $\bF_1$ and $\bF_2$ into a single forest 
$\bF_1 + \bF_2$ which is determined 
up to isomorphism by  $\nu_{\bF} = \nu_{\bF_1} + \nu_{\bF_2}$. 
Extend this operation to classes $\cF$ of forests
by 
$\cF_1 + \cF_2 = \{\bF_1 + \bF_2 : \bF_i \in \cF_i\}$. 
The \textit{ideal class} $\cO$  of forests is introduced with
the properties $\cO\cup \cF =\cO + \cF = \cF$ (it is introduced solely as a notational device
to smooth out the presentation).

Define the operation $\star$ between non-empty subsets $A$ of $\bbN$ and 
non-empty classes $\cF$ of forests by 
\begin{eqnarray*}
n\star \cF &=& 
\begin{cases} \cO &\text{if }n=0\\
\underbrace{\cF + \cdots + \cF}_{n-\text{fold}}&\text{if } n\ge 1
\end{cases}\\
A \star \cF &=& 
\bigcup_{a\in A}a \star \cF.
\end{eqnarray*}

\subsection{Compton's Specification of MSO Classes of Trees}

$\bF_1\equiv_q^\MSO\bF_2$  means that $\bF_1$ and $\bF_2$ satisfy the same 
MSO sentences of quantifier rank $q$.
$\equiv_q^\MSO$ is an equivalence relation on $\cF$ of finite index. In the following, 
when given a MSO class $\cF$ of forests, it will be assumed that  $q$ has been chosen  
large enough so (i) $\cF$ is definable by a MSO sentence of quantifier depth $q$,
 and (ii) that there 
are MSO sentences of quantifier depth $q$ to express ``is a tree'', ``is a forest''. 
Then $\TREES$ is a union of $\equiv_q^\MSO$ classes of forests, say 
$\TREES = \cT_1\cup\cdots\cup\cT_r$.
 If $\cT_i$ has a 1-element tree in it then no other tree is in $\cT_i$.
Assume that there are $m$ colors, and let $\bullet_i$ denote the 1-element tree of color $i$, 
and assume $\cT_i = \{\bullet_i\}$ for $1\le i \le m$. These are the only $\cT_i$ with a 
one-element member, and all trees in any given $\cT_i$ have the same root-color, say $i'$.

Given a tree $\bT$ with more than one element, let $\partial \bT$ be the forest that 
results from removing the root $\rt(\bT)$ from $\bT$; and given any forest $\bF$, 
let $\bullet_i/\bF$ be the tree that results by adding a root of color $i$ to the forest. 
The operation $\partial$ is extended in the obvious manner to $\partial \cT$ for
any non-empty class $\cT$ of trees that does not have a one-element tree in it; 
and the operation of adding a root of color $i$ to a forest is extended to $\bullet_i/\cF$ 
for any non-empty class $\cF$ of forests. 
 
 %
 \begin{lemma}\label{nf tools}
 Let $q$ be a positive integer.
 \begin{thlist}
\item
The operations of disjoint union and $\bullet_i\big/$  preserve $\equiv_q^{\sf MSO}$,
that is, 
\subitem 
$\displaystyle T_i\equiv_q^{\sf MSO} T_i'\  
\Rightarrow \ 
\sum_i T_i \equiv_q^{\sf MSO} \sum_i T_i'$, and 
\subitem 
$\displaystyle F \equiv_q^{\sf MSO} F' \ 
\Rightarrow\  
\bullet_i/F \equiv_q^{\sf MSO} \bullet_i/F'$, for $1 \le i \le m$.
\item
There is a constant $C_q$ such that for all trees $T$ and all $n\ge C_q$ 
one has 
$n\star T \equiv_q^{\sf MSO} C_q\star T$.
\item
There is a decision procedure to determine if $F_1 \equiv_q^{\sf MSO} F_2$.
\end{thlist}
\end{lemma}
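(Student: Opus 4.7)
The plan is to prove the three items in order, leveraging classical Ehrenfeucht--Fra\"{i}ss\'{e} (EF) game techniques.

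For (a) I would invoke the standard Feferman--Vaught-style composition theorem for MSO on disjoint unions. Given winning strategies for Duplicator in the $q$-round MSO EF game on each pair $(T_i, T_i')$, one combines them into a winning strategy on $(\sum_i T_i, \sum_i T_i')$ by handling each of Spoiler's element-moves in whichever summand it lies, and by taking the union (coordinate-by-coordinate on summands) of the responses to any monadic set-move. For the operation $\bullet_i/F$, the new root is definable as the unique maximal element; I would extend a strategy for $(F, F')$ to a strategy for $(\bullet_i/F, \bullet_i/F')$ by first pairing the two new roots and then running the strategy on the rest of the game, using that monadic moves that include the root correspond to monadic moves on the sub-forest together with a fixed choice at the root.

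For (b) I would combine (a) with the finiteness of the index of $\equiv_q^{\sf MSO}$ on forests. By (a), the class of $n\star T$ depends only on $n$ and on $[T]_q$, so $n\mapsto [n\star T]_q$ takes values in a finite set, yielding eventual periodicity. To upgrade periodicity to stabilization, I would give a direct EF argument: set $C_q := 2^q$ and show Duplicator wins the $q$-round MSO game on $(n\star T,\, C_q\star T)$ for every $n\ge C_q$. Her strategy classifies the copies of $T$ on each side by the pattern of previously chosen subsets; after $r\le q$ rounds there are at most $2^r$ such patterns, and because both sides have at least $2^q$ copies, a fresh copy in the required pattern is always available on each side. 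Composing with (a) inside each copy then handles element-moves.

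For (c) I would use that $\equiv_q^{\sf MSO}$ has only finitely many classes, each characterized (up to $\equiv_q^{\sf MSO}$) by a Hintikka formula $H_{F,q}$ of quantifier rank $q$; these Hintikka formulas are effectively enumerable by induction on $q$ over the finite relational vocabulary (a fixed number of colors together with the order relation). Given finite $F_1, F_2$, one computes $H_{F_1,q}$ and tests $F_2 \models H_{F_1,q}$; since $F_2$ is finite, MSO satisfaction is decidable by brute-force enumeration of interpretations of the quantified variables. Then $F_1 \equiv_q^{\sf MSO} F_2$ iff the test succeeds.

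The main obstacle is (b). Eventual periodicity follows immediately from the finiteness of the index, but the statement demands stabilization at a single bound $C_q$ that is uniform in the tree $T$. The EF-strategy sketched above has to be written out with some care, since Spoiler may alternate element-moves and set-moves, and the classification of copies by ``pattern so far'' must be refined at every round without letting the count of needed copies grow beyond $2^q$. Once the uniform bound is established, (c) becomes routine, and (a) is essentially the composition theorem itself.
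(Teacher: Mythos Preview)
Your approach via Ehrenfeucht--Fra\"{i}ss\'{e} games is exactly what the paper invokes: its proof of this lemma consists of citations (to \cite{Burris2001} and \cite{GuSh2003}) together with the remark that (a)--(c) all follow from E-F game arguments. Your treatment of (a) and (c) is correct and in fact more detailed than what the paper provides.

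There is, however, a gap in your argument for (b). The bound $C_q = 2^q$ is too small, and the assertion that ``after $r\le q$ rounds there are at most $2^r$ such patterns'' does not hold for the MSO game. A set move $X$ does not merely mark each copy of $T$ as ``in'' or ``out''; it restricts to an \emph{arbitrary} subset of each copy. What governs the remaining $q-r$ rounds is the $\equiv_{q-r}^{\sf MSO}$-type of each copy expanded by the restrictions of the sets played so far, and the number of such types (over the vocabulary enlarged by up to $r$ unary predicates) is far larger than $2^r$ --- it is bounded only by a tower-type function of $q$. Your strategy is the right one, but Duplicator needs enough copies on each side to match every occurring expanded type up to the appropriate threshold multiplicity, so the uniform constant $C_q$ must be taken at least on the order of the number of rank-$q$ MSO types over the enlarged vocabulary. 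With that correction the inductive E-F argument goes through, and the existence of a uniform $C_q$ (which is all the lemma claims) follows; the paper itself does not name an explicit bound.
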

\begin{proof}
One can find a discussion of the first item of (a), as well as item (b),  
in \cite{Burris2001}, based on E-F games. 
Use E-F games for  (c) as well.  (a)--(c) are basic tools of Gurevich and Shelah
(\cite{GuSh2003}, 2003).
\end{proof}

 The next lemma gives the crucial structure result for MSO classes of forests.
 
 \begin{lemma} \label{normal form}
 Let $\cF$ be a MSO class of forests defined by a sentence of quantifier rank $q$.
 Then there is a finite set $\bbS$ of $r$-tuples $S = (S_i)$ of cofinite or non-empty 
 finite subsets $S_i$ of $\bbN$
 such that 
 \begin{equation*}\label{expr F}
 \cF\ =\ \bigcup_{S\in\bbS} \sum_{\substack{j=1\\ S_j\neq\{0\}}}^r S_j \star \cT_j.
 \end{equation*}
 \end{lemma}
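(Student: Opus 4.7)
The plan is to show that the $\equiv_q^\MSO$-class of a forest $\bF$ is determined by the truncated count tuple $\big(\min(n_1(\bF), C_q), \ldots, \min(n_r(\bF), C_q)\big)$, where $n_j(\bF)$ counts the constituent trees of $\bF$ lying in $\cT_j$ and $C_q$ is the constant from Lemma \ref{nf tools}(b). Because $\TREES = \cT_1\cup\cdots\cup \cT_r$ partitions the tree isomorphism-types into finitely many MSO-classes, each forest $\bF$ has a well-defined count tuple $(n_1, \ldots, n_r)\in\bbN^r$; choosing representatives $T_j\in\cT_j$ gives $\bF\cong\sum_{j} n_j(\bF)\star T_j$.

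First, I would verify that this truncated tuple actually determines the $\equiv_q^\MSO$-class. Lemma \ref{nf tools}(a) says disjoint union is a congruence for $\equiv_q^\MSO$, so replacing any tree in $\bF$ by an $\equiv_q^\MSO$-equivalent one preserves the class of $\bF$; in particular, the class of $\sum_j n_j\star T_j$ depends on the count tuple alone, not on the choice of representatives $T_j\in\cT_j$. Lemma \ref{nf tools}(b) then gives $n_j\star T_j\equiv_q^\MSO C_q\star T_j$ whenever $n_j\ge C_q$. Substituting one tree-type at a time, and invoking (a) to leave the other summands untouched at each step, shows that the $\equiv_q^\MSO$-class of $\bF$ depends only on $(\min(n_j,C_q))_{j=1}^r$.

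Next, since $\cF$ is defined by an MSO sentence of quantifier rank $q$, $\cF$ is a union of $\equiv_q^\MSO$-classes, and by the previous paragraph this union is indexed by some subset of the finite set $\{0,1,\ldots,C_q\}^r$. For each tuple $\mu=(\mu_1,\ldots,\mu_r)$ in the index set, define $S_j:=\{\mu_j\}$ if $\mu_j<C_q$, and $S_j:=\{C_q,C_q+1,\ldots\}$ if $\mu_j=C_q$. Each $S_j$ is either a singleton (a non-empty finite set) or a cofinite subset of $\bbN$, as required. The $\equiv_q^\MSO$-class labelled by $\mu$ is exactly the collection of forests of the form $\sum_j n_j\star T_j$ with $n_j\in S_j$ for every $j$, which in the class-notation of the lemma is $\sum_{j:S_j\neq\{0\}} S_j\star\cT_j$ (summands with $S_j=\{0\}$ contribute the identity $\cO$ and are dropped). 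Letting $\bbS$ be the resulting finite collection of tuples yields the claimed normal form.

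The main obstacle is just a bookkeeping one. Lemma \ref{nf tools}(b) is stated only for $n\star T$ with a single tree $T$, so in a mixed-type forest one must substitute $C_q\star T_j$ in place of an excessive $n_j\star T_j$ while leaving the summands involving other $T_\ell$ untouched. This is a routine induction combining (a) and (b), but does need to be carried out carefully; apart from that, the argument is purely organisational.
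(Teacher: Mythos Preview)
Your argument is correct and follows essentially the same route as the paper: both use the constant $C_q$ from Lemma \ref{nf tools}(b) together with the congruence property in (a) to reduce the $\equiv_q^\MSO$-type of a forest to the truncated count tuple $\big(\min(n_j,C_q)\big)_j$, and then read off the $S_j$ as singletons or the cofinite tail $\{n:n\ge C_q\}$. One small slip: in your first paragraph you write $\bF\cong\sum_j n_j(\bF)\star T_j$, but this is only $\equiv_q^\MSO$, not isomorphism---you correctly invoke Lemma \ref{nf tools}(a) for exactly this in the next paragraph, so the argument stands.
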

 \begin{proof}
 Let $C_q$ be as in Lemma \ref{nf tools}.
 A routine application of Ehrenfeucht-Fra\"{i}ss\'{e} games shows that
  for any two $r$-tuples $(n_i)$ and $(n_i')$ of non-negative integers with
  $n_i\ge C_q$ iff $n_i' \ge C_q$, one has every member
  of $\sum_i n_i\star \cT_i $ equivalent modulo $\equiv_q^\MSO$ to every member of 
$\sum_i n_i'\star \cT_i $. 

  Thus $\cF$ decomposes into a (disjoint) union of finitely many classes 
  $\sum_{i=1}^r S_i \star\cT_i$ where each $S_i$ is either a singleton $\{n_i\}$ with
  $1\le n_i \le m$ or the cofinite set $\{n : n\ge C_q\}$.
 \end{proof}
 
 \begin{lemma}\label{cut T MSO}
 For $m < i\le r$, the class of forests $ \partial \cT_i$ is definable by a MSO sentence 
 of quantifier rank $q$.
\end{lemma}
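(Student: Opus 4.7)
The plan is to show that $\partial\cT_i$ is closed under the equivalence relation $\equiv_q^{\MSO}$ restricted to forests, and then invoke the standard fact that any union of $\equiv_q^{\MSO}$-classes of forests is MSO-definable by a sentence of quantifier rank $q$ (since each $\equiv_q^{\MSO}$-class is defined by its complete $q$-type, and there are only finitely many such classes).

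First I would observe that the root color of a tree is expressible as an MSO sentence of quantifier rank $1 \le q$, so all members of $\cT_i$ share a common root color; call it $i'$. For $i > m$ the class $\cT_i$ contains no one-element tree, so $\partial \bT$ is a (possibly empty, but well-defined) forest for every $\bT \in \cT_i$, and moreover $\bT = \bullet_{i'}/\partial \bT$. Therefore
\[
\partial\cT_i \ = \ \bigl\{\bF : \bF \text{ is a forest and } \bullet_{i'}/\bF \in \cT_i\bigr\}.
\]

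Next, I would show the key closure property: if $\bF$ and $\bF'$ are forests with $\bF \equiv_q^{\MSO} \bF'$, then $\bF \in \partial\cT_i$ iff $\bF' \in \partial\cT_i$. By Lemma \ref{nf tools}(a), adjoining a common root preserves $\equiv_q^{\MSO}$, so $\bullet_{i'}/\bF \equiv_q^{\MSO} \bullet_{i'}/\bF'$. Since $\cT_i$ is itself a single $\equiv_q^{\MSO}$-class of structures, $\bullet_{i'}/\bF \in \cT_i$ if and only if $\bullet_{i'}/\bF' \in \cT_i$, which by the displayed identity above gives the desired equivalence. Being a forest is already MSO-definable at rank $q$ by the standing assumption, so the restriction to forests is well-behaved.

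Finally, because $\equiv_q^{\MSO}$ has only finitely many classes, $\partial\cT_i$ is the union of finitely many of them, together with the rank-$q$-definable property of being a forest. Each $\equiv_q^{\MSO}$-class is defined by its complete $q$-type, which is (equivalent to) a single MSO sentence of quantifier rank $q$. Taking the finite disjunction, together with the "is a forest" sentence, yields an MSO sentence of quantifier rank $q$ defining $\partial\cT_i$, as required.

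There is no real obstacle: the whole proof is driven by Lemma \ref{nf tools}(a), which packages the Ehrenfeucht--Fra\"{i}ss\'e game argument that adjoining a root preserves $\equiv_q^{\MSO}$. The only subtlety worth pointing out is the use of the uniform root color $i'$, which is what makes the map $\bF \mapsto \bullet_{i'}/\bF$ well-defined in the backward direction and thus identifies $\partial\cT_i$ with the preimage of $\cT_i$ under this root-adjunction operator.
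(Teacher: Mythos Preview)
Your proof is correct and follows essentially the same approach as the paper: show that $\partial\cT_i$ is closed under $\equiv_q^{\MSO}$ via Lemma~\ref{nf tools}(a) (root-adjunction preserves $\equiv_q^{\MSO}$), and then conclude definability at rank $q$ from the fact that any union of $\equiv_q^{\MSO}$-classes is so definable. Your write-up is more detailed than the paper's two-line argument, in particular spelling out the role of the uniform root color $i'$ and the finiteness of $\equiv_q^{\MSO}$-classes, but the underlying idea is identical.
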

\begin{proof}
$\partial \cT_i$ is closed under $\equiv_q^\MSO$ since
Lemma \ref{nf tools} shows $\bF_1 \equiv_q^\MSO \bF_2$ implies 
$\bullet_i\big/\bF_1 \equiv_q^\MSO \bullet_i\big/\bF_2$, thus 
$\bF_1 \equiv_q^\MSO \bF_2$ and $\bF_1 \in \partial\cT_i$ imply $\bF_2 \in \partial\cT_i$. 
\end{proof}

\begin{theorem}[Compton, see \cite{Woods1997}] \label{Thm Kevin}
Let $\cT$ be a class of $m$-colored trees defined by a MSO sentence of quantifier depth $q$. 
Then:
\begin{thlist}
\item
$\cT$ is a union  of some of the $\cT_i$, and
\item
the $\cT_i$ satisfy a system of equations  
$$
\Sigma_q :\ 
\left\{
\begin{array}{r c l}
   \cT_1  & = &    \Phi_{1}\big(\cT_{1},\ldots,\cT_{r}\big)\\
               &\vdots&\\
   \cT_r  & = &    \Phi_{r}\big(\cT_{1},\ldots,\cT_{r}\big),
\end{array}
\right.
$$
\end{thlist}
where $\Phi_i\big(\cT_1,\ldots,\cT_r\big)$ is  $\{\bullet_i\}$ for $1\le i\le m$, 
and for $i>m$ it has the form 
\begin{equation}\label{Fnormal form}
  \bullet_{i'}\bigg/ \bigcup_{S \in \bbS_i} \sum_{j=1}^r S_j\star \cT_j
\end{equation}
with each $\bbS_i$ being a finite set of  $r$-tuples $S = (S_1,\ldots,S_r)$, 
with each $S_j$ a cofinite or non-empty finite  subset of $\mathbb N$.
\end{theorem}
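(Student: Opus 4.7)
The plan is to assemble the earlier lemmas about $\equiv_q^{\MSO}$-equivalence of trees and forests. Two observations do the work: any MSO-definable class of quantifier rank $q$ is automatically a union of $\equiv_q^{\MSO}$-equivalence classes, and the root-stripping operation $\partial$ transports an $\equiv_q^{\MSO}$-class of non-singleton trees to an $\equiv_q^{\MSO}$-class of forests, to which Lemma \ref{normal form} applies. Item (a) is then immediate: since $\cT$ is definable by a sentence of quantifier rank $q$, it is closed under $\equiv_q^{\MSO}$ restricted to $\TREES$, and the $\cT_i$ are by construction the $\equiv_q^{\MSO}$-classes of $\TREES$, so $\cT = \bigcup_{i\in I}\cT_i$ for some $I\subseteq\{1,\ldots,r\}$.

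For item (b), the cases $1\le i\le m$ are immediate, since $\cT_i = \{\bullet_i\}$ already fits the required form with $\Phi_i := \{\bullet_i\}$. So fix $i>m$; every tree in $\cT_i$ has more than one node and, as noted preceding Lemma \ref{nf tools}, shares a common root-color $i'$. Hence $\cT_i = \bullet_{i'}/\partial\cT_i$. By Lemma \ref{cut T MSO}, $\partial\cT_i$ is MSO-definable of quantifier rank $q$, and Lemma \ref{normal form} supplies a finite set $\bbS_i$ of $r$-tuples of cofinite or non-empty finite subsets of $\bbN$ such that
\[
\partial\cT_i\ =\ \bigcup_{S\in\bbS_i}\sum_{j=1}^r S_j\star\cT_j.
\]
Adjoining a root of color $i'$ then yields $\Phi_i\big(\cT_1,\ldots,\cT_r\big)$ in the form demanded by \eqref{Fnormal form}.

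The conceptual work has already been done in the prior lemmas (which in turn rest on Ehrenfeucht-Fra\"{i}ss\'{e} games), so there is no genuine obstacle to overcome. The only point I would verify carefully is the bookkeeping in the summation of \eqref{Fnormal form}: terms with $S_j = \{0\}$ should be omitted (matching the convention of Lemma \ref{normal form}), and the singleton classes $\{\bullet_j\}$ for $j\le m$ must be correctly accounted for whenever they appear as children of the root in some $\cT_i$. Once these indexing details are confirmed, the proof is a direct assembly of the cited lemmas.
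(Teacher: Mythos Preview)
Your proposal is correct and follows essentially the same route as the paper: part (a) is immediate from closure under $\equiv_q^{\MSO}$, and for (b) with $i>m$ you write $\cT_i = \bullet_{i'}/\partial\cT_i$, invoke Lemma \ref{cut T MSO} to get that $\partial\cT_i$ is MSO-definable of rank $q$, apply Lemma \ref{normal form}, and reattach the root. The bookkeeping remarks you flag are reasonable but do not alter the argument.
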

\begin{proof}
(a) is obviously true. For (b) note that for $m<i\le r$, $\cT_i  = \bullet_i\big/\partial \cT_i$.
Lemma \ref{cut T MSO} 
says
$\partial \cT_i$  is definable by a MSO sentence of quantifier rank $q$. Then Lemma \ref{normal form}
shows that
$\partial \cT_i$ can be expressed in a particular form.
One only needs to attach the root $\bullet_{i'}$ to have
\eqref{Fnormal form}.
\end{proof}

Applying $\Spec$ to  $\Sigma_q$  gives a  system of set-equations for 
the spectra of the classes $\cT_i$:

\begin{corollary}\label{spec eqns}
For $\cT$ as in Compton's Theorem, $\Spec(\cT)$ is a union of some of the $\Spec(\cT_i)$, and
\begin{eqnarray}\label{Kevin spec}  \displaystyle
   \Spec(\cT_i )& = & 
   \begin{cases}
  \{1\}&\text{for }1\le i\le m\\  
  \displaystyle
   \{1\} + 
   \bigcup_{S \in \bbS_i} \sum_{j=1}^r S_j\star \Spec(\cT_j )&\text{for }m<i\le r.
   \end{cases}  
\end{eqnarray}
\end{corollary}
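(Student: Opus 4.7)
The plan is to observe that Compton's Theorem (Theorem~\ref{Thm Kevin}) already produces the system $\Sigma_q$ describing the classes $\cT_i$, and the corollary is obtained by simply applying the $\Spec$ operator to both sides of each equation in $\Sigma_q$. This parallels what Lemma~\ref{spec hom} does in the power-series setting: one verifies that $\Spec$ is a homomorphism with respect to each of the forest-class operations appearing on the right-hand sides, and then substitutes these identities into $\Sigma_q$ line by line.

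The first step, which I would do up front, is to verify the relevant homomorphism properties directly from the definitions given earlier in the section. For non-empty forest classes $\cF_1, \cF_2$: $\Spec(\cF_1 \cup \cF_2) = \Spec(\cF_1) \cup \Spec(\cF_2)$, because union of classes produces the union of size-sets; $\Spec(\cF_1 + \cF_2) = \Spec(\cF_1) + \Spec(\cF_2)$, because the size of $\bF_1 + \bF_2$ is $|\bF_1| + |\bF_2|$; $\Spec(n \star \cF) = n \star \Spec(\cF)$ follows by iterating the previous identity, and taking the union over $a \in A$ gives $\Spec(A \star \cF) = A \star \Spec(\cF)$; and finally $\Spec(\bullet_{i'}/\cF) = \{1\} + \Spec(\cF)$, since adjoining a single root increases the size of each forest by exactly one. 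One also checks the boundary convention: the ideal class $\cO$ (contributing only in the $S_j = \{0\}$ case) corresponds on the set-side to the additive identity in the $+$ operation, which is why dropping the restriction ``$S_j \ne \{0\}$'' in passing from Lemma~\ref{normal form} to formula~\eqref{Kevin spec} is harmless.

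With these homomorphism identities in hand, part (a) follows immediately by applying the union identity to Theorem~\ref{Thm Kevin}(a). For part (b), apply $\Spec$ to each equation in $\Sigma_q$: for $1 \le i \le m$ the right-hand side is $\{\bullet_i\}$, a single tree of size one, giving $\Spec(\cT_i) = \{1\}$; for $i > m$ one pushes $\Spec$ successively through $\bullet_{i'}/\,$, through $\bigcup_{S \in \bbS_i}$, through $\sum_{j=1}^r$, and through each operator $S_j \star$, arriving exactly at formula~\eqref{Kevin spec}. There is essentially no obstacle: the heavy combinatorial and logical work has been absorbed into Compton's Theorem via the Ehrenfeucht--Fra\"{i}ss\'{e} game arguments of Lemmas~\ref{nf tools} and \ref{normal form}. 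What remains is the bookkeeping that $\Spec$ commutes with the forest-class constructions, which it does because those constructions were set up on forests precisely to mirror the corresponding set operations on sizes.
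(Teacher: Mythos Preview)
Your proposal is correct and matches the paper's approach exactly: the paper simply introduces the corollary with the phrase ``Applying $\Spec$ to $\Sigma_q$ gives a system of set-equations for the spectra of the classes $\cT_i$'' and gives no further proof. Your write-up is a careful unpacking of that one sentence, verifying the homomorphism identities that the paper leaves implicit.
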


\begin{remark} 
Compton \cite{Compton2009} described his equational specification for the minimal MSO 
classes of trees of quantifier depth $q$ to Alan Woods during a visit to Yale in 1986; at the time Woods was a PostDoc at Yale. Evidently Compton regarded such an equational specification for
trees as a straightforward generalization of the earlier work of B\"uchi, which
showed that regular 
languages were precisely the MSO  classes of $m$-colored linear trees.
\end{remark}

\subsection{The dependency digraph of  $\Sigma_q$}
The dependency digraph $D_\fq$ for $\Sigma_q$ is defined parallel to the definition for 
systems of set-equations.
 $D_q$ has vertices $1,\ldots,r $  and, referring to \eqref{Kevin spec}, directed edges given
 by $i \rightarrow j$ iff  there is a $S\in\bbS_i$ such
 that $S_j\neq\{0\}$. 
%
%
One defines a \textit{height function} on $D_q$ by setting $h(i)=0$ for $1\le i\le m$, and
then for $m<i\le r$ use the inductive definition $h(i) = 1+\max\{h(j) : i \rightarrow^+j, 
\text{ but not } j\rightarrow^+ i\}$.

\begin{corollary}
The spectrum of a MSO class $\cT$ of $m$-colored trees is eventually periodic.
\end{corollary}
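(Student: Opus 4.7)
The plan is to derive from Compton's Theorem a reduced elementary system of set-equations, then appeal to parts (c) and (d) of Theorem~\ref{ThmSetEqn} to conclude eventual periodicity of each spectrum $T_i := \Spec(\cT_i)$, and finally take a union.

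By Theorem~\ref{Thm Kevin} and Corollary~\ref{spec eqns}, the spectra satisfy $T_i = \{1\}$ for $1 \le i \le m$, and
\[
  T_i \ =\ \{1\} + \bigcup_{S \in \bbS_i} \sum_{j=1}^{r} S_j \star T_j \qquad (m < i \le r),
\]
with each $\bbS_i$ finite and each $S_j$ either finite or cofinite. Using the distributive identity $\sum_j S_j \star Y_j \ =\ \bigcup_{\bu :\, u_j \in S_j \text{ for all } j} \sum_j u_j \star Y_j$ from Lemma~\ref{basic identities}, I would rewrite the spectral system as $\bY = \bgam(\bY)$ with
\[
  \gam_i(\bY) \ =\ \bigcup_{\bu \in \bbN^r}\bigl(\gam_{i,\bu} + \bu \star \bY\bigr),
\]
where each $\gam_{i,\bu} \in \bigl\{\{1\}, \O \bigr\}$. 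Since $\{1\} \subseteq \bbP$ the system lies in $\Sdom_0$; since $0 \notin \{1\}$ the elementary condition of Definition preceding Theorem~\ref{ThmSetEqn} holds vacuously; and since every equivalence class $\cT_i$ is non-empty, each $T_i$ is non-empty, so the system is reduced elementary.

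Next I would dispatch each index $i$ via Theorem~\ref{ThmSetEqn}. For $[i] \neq \O$, part (c) gives $T_i$ periodic with no further hypothesis. For $[i] = \O$, I induct on the height $h(i)$: the assumption $[i] = \O$ forces $j \not\rightarrow^+ i$ whenever $i \rightarrow j$, so either $h(j) < h(i)$, or $i \le m$ in which case $T_i = \{1\}$ is immediate. By the induction hypothesis, together with part (c) applied to those $j$ with $[j] \neq \O$, each $T_j$ appearing on the right of the $i$th equation is eventually periodic. That equation already has the form $P_i + \bigcup_{Q \in \fQ_i} \sum_j Q_j \star Y_j$ required by (d), with $P_i = \{1\}$, $\fQ_i = \bbS_i$ finite, and each $Q_j = S_j$ finite or cofinite and hence eventually periodic; so (d) yields $T_i$ eventually periodic.

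Finally, Compton's Theorem expresses $\cT$ as a finite union of some of the $\cT_i$, so $\Spec(\cT)$ is a finite union of now-established eventually periodic sets, and is itself eventually periodic by iterated application of Lemma~\ref{ground case}\,(a). The only real difficulty is bookkeeping: translating the spectral equations of Corollary~\ref{spec eqns}, whose right-hand sides involve $S_j \star Y_j$ for possibly cofinite $S_j$, into the canonical shape $\bigcup_\bu(\gam_{i,\bu} + \bu \star \bY)$ of Section~3, and then verifying that the resulting system is reduced elementary; once that is in hand, Theorem~\ref{ThmSetEqn} does all the work.
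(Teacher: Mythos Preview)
Your proof is correct and follows essentially the same structure as the paper's: split on whether $[i]$ is empty, observe periodicity directly when $[i]\neq\O$, and induct on the height $h(i)$ when $[i]=\O$, then take a finite union via Lemma~\ref{ground case}. The only difference is packaging: you take the extra step of casting Corollary~\ref{spec eqns} into the canonical $\bgam$ form so as to quote Theorem~\ref{ThmSetEqn}(c),(d), whereas the paper works directly from the spectral equations and Lemma~\ref{ground case}---which is exactly what the proofs of (c) and (d) do anyway---so no rewriting into $\Sdom_0$ is needed.
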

\begin{proof}
It suffices to prove this result for the $\cT_i$ in view of Lemma \ref{ground case} (which 
guarantees that eventual periodicity is preserved by finite union). 
For $1\le i\le m$ this is trivial. So suppose $m<i\le r$, and
 note that whenever $j\rightarrow k$ one has $\Spec(\cT_j) \supseteq p_{jk} + \Spec(\cT_k)$
for some positive integer $p_{jk}$, by \eqref{Kevin spec}. Thus $i\rightarrow^+j$ implies the same
conclusion. If $[i]\neq \O$ then $i\rightarrow^+ i$, 
so $\Spec(\cT_i)\supseteq p + \Spec(\cT_i)$ for some $p\in \bbP$,
so $\Spec(\cT_i)$ is actually periodic. If $[i] = \O$ then one argues, by induction on the height $h(i)$, that 
$\Spec(\cT_i)$ is eventually periodic. The ground case, $h(i)=0$, holds precisely for $1\le i\le m$, and
in these cases $\Spec(\cT_i) = \{1\}$, an eventually periodic set.  Now suppose the result holds for
$h(i) \le n$. If $h(i)= n+1$ then $m< i\le r$, and one has
\begin{eqnarray*}
\Spec(\cT_i)&=&  \{1\} + \bigcup_{S \in \bbS_i} 
\sum_{j=1}^r S_j\star \Spec(\cT_j )
\end{eqnarray*}
For the  $j$ such that there is an $S$ with $S_j \neq \{0\}$ 
(there is at least one such $j$ since $i>m$)
 one has $i \rightarrow j$, so $h(j)< h(i)$, implying $\Spec(\cT_j)$
is eventually periodic (by the induction hypothesis). The $S_j$ are either cofinite or non-empty finite,
and therefore eventually periodic. Then Lemma \ref{ground case} shows 
$\Spec(\cT_i)$ is eventually periodic,  since being eventually periodic is preserved by finite unions,
(finite) sums, and $\star$, 
 with the additional information that those $\cT_i$ belonging to a strong component are 
actually periodic.
\end{proof}

\begin{corollary}
The spectrum of a MSO class $\cF$ of $m$-colored forests is eventually periodic.
\end{corollary}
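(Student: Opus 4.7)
The plan is to reduce the forest case directly to the tree case that was just established, using Lemma \ref{normal form} as the structural bridge. First I would invoke Lemma \ref{normal form}, which says that any MSO class $\cF$ of forests, defined by a sentence of quantifier rank $q$, has the form
$$
\cF\ =\ \bigcup_{S\in\bbS}\ \sum_{\substack{j=1\\ S_j\neq\{0\}}}^{r} S_j \star \cT_j,
$$
where $\bbS$ is a finite set of $r$-tuples $S=(S_1,\ldots,S_r)$, each $S_j$ is either cofinite in $\bbN$ or a non-empty finite subset of $\bbN$, and the $\cT_j$ are the finitely many $\equiv_q^{\sf MSO}$-classes of trees.

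Next I would apply $\Spec$ to both sides. Since $\Spec$ distributes over disjoint union, finite sum of forests (into $+$ of spectra), and $n\star$ (into the set-theoretic $\star$), I obtain
$$
\Spec(\cF)\ =\ \bigcup_{S\in\bbS}\ \sum_{\substack{j=1\\ S_j\neq\{0\}}}^{r} S_j \star \Spec(\cT_j).
$$
By the preceding corollary, each $\Spec(\cT_j)$ is eventually periodic. Each $S_j$ is either cofinite (hence eventually periodic with period $1$) or finite (hence eventually periodic trivially), so each $S_j$ is eventually periodic as well. Now Lemma \ref{ground case} says that eventual periodicity is preserved by (finite) union, sum, and $\star$. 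Applying part (c) to each $S_j\star\Spec(\cT_j)$, then part (b) to the finitely many summands, and finally part (a) to the finitely many sets indexed by $S\in\bbS$, I conclude that $\Spec(\cF)$ is eventually periodic.

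The only subtle point, and the one I would verify carefully, is the handling of the empty forest: if the unique forest on zero trees is allowed in $\cF$ then $0\in\Spec(\cF)$, but this only affects finitely many values and is absorbed into the ``finite set'' part of the eventual periodicity (Lemma \ref{split}\,(b)). Thus no real obstacle arises; the corollary follows by straightforward composition of the normal-form lemma, the tree case just proved, and the closure properties of eventually periodic sets recorded in Lemma \ref{ground case}.
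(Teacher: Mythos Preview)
Your proof is correct, but the paper takes a much shorter route: it simply observes that $\bullet_1\big/\cF$ is a MSO class of trees, so by the immediately preceding corollary $\Spec(\bullet_1/\cF)=\{1\}+\Spec(\cF)$ is eventually periodic, and hence $\Spec(\cF)$ is too. Your argument instead unpacks Lemma~\ref{normal form} and reruns the closure argument from Lemma~\ref{ground case}, essentially repeating the inductive step of the tree case rather than invoking that case as a black box. The paper's trick is slicker and avoids re-establishing the closure properties, at the modest cost of needing to know that $\bullet_1/\cF$ is again MSO-definable (a relativization argument); your route is more self-contained and makes the dependence on Lemma~\ref{normal form} explicit, which could be useful if one wanted quantitative control over the periodicity parameters of $\Spec(\cF)$ directly in terms of those of the $\Spec(\cT_j)$.
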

\begin{proof}
Since $\bullet_1\big/\cF$ is a MSO class of trees one has $\{1\}+\Spec(\cF)$ eventually periodic,
hence so is $\Spec(\cF)$.
\end{proof}

\begin{theorem}[Gurevich and Shelah \cite{GuSh2003}, 2003] \label{Thm GuSh}
Let $\cU$ be a MSO class of $m$-colored unary functions.
 Then the spectrum $\Spec(\cU)$ is eventually periodic.
\end{theorem}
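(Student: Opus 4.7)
The plan is to reduce MSO classes of $m$-colored unary functions to Compton's already-established result for MSO classes of $m$-colored trees, via the canonical ``cycles-of-trees'' decomposition of a finite functional digraph. Every finite $m$-colored unary function $(U,f,C_1,\ldots,C_m)$ decomposes uniquely into connected components, and each connected component is a directed cycle whose vertices each carry a rooted $m$-colored tree (with edges pointing toward the root, i.e., toward the cycle). A general $m$-colored unary function is thus a multiset of such ``cycle-of-trees'' components.

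By the standard Ehrenfeucht-Fra\"{i}ss\'{e} composition arguments (analogues of Lemma \ref{nf tools}), I would establish two facts. First, the $\equiv_q^\MSO$-type of a disjoint union of unary functions is determined by the multiset of $\equiv_q^\MSO$-types of its components, with component multiplicities recorded only up to a threshold $C_q$. Consequently, by the packaging used in Lemma \ref{normal form}, any MSO class $\cU$ of $m$-colored unary functions is a finite union of expressions of the form $\sum_{j} N_j \star \cC_j$, where $\cC_1,\ldots,\cC_s$ enumerate the $\equiv_q^\MSO$-classes of connected unary functions and each $N_j \subseteq \bbN$ is either a singleton or a cofinite set. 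By Lemma \ref{ground case}, the eventual periodicity of $\Spec(\cU)$ then reduces to that of each $\Spec(\cC_j)$. Second, the $\equiv_q^\MSO$-type of a connected unary function is determined by the cyclic sequence of $\equiv_q^\MSO$-types of its cycle-vertex trees, with the cycle length considered only up to an eventually periodic congruence.

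Granting the second fact, each $\cC_j$ may be written as a finite union of restricted directed-cycle classes $\DCycle_{L}(\cR)$, where $\cR$ is a suitable MSO class of $m$-colored rooted trees (MSO can reorient edges, so the in-tree/out-tree distinction is immaterial) and $L\subseteq\bbP$ is the union of finitely many singletons with an eventually periodic cofinite set, specified by a regular constraint on the cyclic sequence of tree $\equiv_q^\MSO$-types (encoded by a finite ``necklace'' automaton that reads tree-type labels around the cycle). On spectra $\DCycle_L$ acts as $L\star$, so combining Compton's theorem for MSO tree classes (the eventual-periodicity corollaries preceding the present theorem) with Lemma \ref{ground case} and the spectrally admissible framework of Theorem \ref{gen spec thm} yields that each $\Spec(\cC_j)$ is eventually periodic; assembling the pieces gives $\Spec(\cU)$ eventually periodic.

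The main obstacle is the cyclic composition claim (the second fact above): one must prove a Shelah-style MSO composition theorem for ``cycles of trees'' that simultaneously accommodates the rotational symmetry of the cycle and the large-cycle periodic behavior of MSO on long cycles, and then must repackage the resulting restriction on cyclic tree-type sequences as a regular constraint in a form compatible with the $\DCycle_L$ framework used here. Once that composition lemma is in place, the remaining steps are a routine assembly of the set-equation and spectral-admissibility machinery from Sections 2--4.
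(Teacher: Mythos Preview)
Your approach is essentially correct in outline, but it takes a far longer and more delicate route than the paper. The paper bypasses the entire cycles-of-trees decomposition with a one-step MSO interpretation trick: given the defining sentence for $\cU$, it defines an MSO class $\cF$ of $m$-colored \emph{forests} with the same spectrum by declaring a forest to lie in $\cF$ iff there exists a set $V$ containing exactly one vertex from each tree such that, after adding an edge from the root of each tree to its $V$-vertex, the resulting functional digraph satisfies the $\cU$-sentence. This is a single MSO sentence (existential set quantifier over $V$, an MSO-definable modified edge relation, then the relativized $\cU$-sentence), and $\Spec(\cF)=\Spec(\cU)$ since cutting one cycle edge per component is exactly the inverse operation. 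One then invokes the already-proved corollary that MSO classes of forests have eventually periodic spectra.

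Compared with your plan, the paper's argument avoids entirely the cyclic composition theorem you flag as the main obstacle, and it needs no necklace-automaton packaging or analysis of $\DCycle_L$. Your route, if completed, would yield more: it would produce an explicit equational specification of $\cU$ in terms of tree classes and cycle operators, which could feed into the periodicity-parameter and asymptotic machinery of Sections~3--5. But for the bare statement (eventual periodicity of $\Spec(\cU)$) it is substantial overkill. Note also that your step expressing each connected $\equiv_q$-class $\cC_j$ as a finite union of $\DCycle_L(\cR)$ is not literally correct in the paper's framework: $\DCycle_L$ restricts only the \emph{length} of the cycle, not a regular constraint on the \emph{sequence} of tree-types around it, so you would need either a product-of-tree-classes argument or an auxiliary linear system (as for regular languages) to encode the necklace automaton before the spectral-admissibility results apply.
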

\begin{proof}
It suffices to show that  one can find an MSO class of $m$-colored 
forests with the same spectrum. 
Let $\cF$ be the class of $m$-colored forests defined as follows:
\begin{quote} \sf
 for each forest in the class there exists a subset $V$ of the forest, with exactly one element from 
 each tree in the forest, such that if one adds a directed edge from the root of each tree in the
 forest to the unique node of the tree in $V$, then
one has a digraph which satisfies a defining sentence of $\cU$.
\end{quote}
Clearly this condition can be expressed by a MSO sentence, so $\Spec(\cF)$ is eventually
periodic; hence so is $\Spec(\cU)$.
\end{proof}

Although the proof of the Gurevich and Shelah Theorem comes after considerable 
develoment of the theory of spectra defined by equations, actually what is needed
for this proof, beyond Compton's Theorem, is Lemma \ref{ground case}.
 This theorem is almost best 
possible for MSO classes --- for example, one cannot replace `unary function' with 
`digraph' or `graph' as one can easily find classes of such structures where the theorem 
fails to hold. The converse, that every eventually periodic set $S\subseteq \bbP$ can be realized
as the spectrum  of  a  MSO sentence for unary functions is easy to prove.

In a related direction one has the following :
 \begin{corollary}\label{defect}
 A MSO class of  graphs with bounded defect has an eventually periodic spectrum.
 \end{corollary}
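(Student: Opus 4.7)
The plan is to reduce the problem to the preceding corollary on MSO classes of $m$-colored forests by exploiting the bounded defect to decompose every graph in the class as a spanning forest plus a bounded set of additional chord edges, encoded by monadic colors.

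First I would fix $d$ as the bound on the defect (interpreted as the cyclomatic number $|E|-|V|+c$, with $c$ the number of connected components) and let $\varphi$ be an MSO sentence defining the class of graphs under consideration, call it $\cG$. Every $G\in\cG$ then admits a decomposition $G=F\cup B$, where $F$ is a spanning forest of $G$ and $B = E(G)\setminus E(F)$ satisfies $|B|\le d$. Since $|B|$ is uniformly bounded, I can record $B$ using at most $2d$ fresh unary predicates that mark the endpoints of the chords, together with a fixed convention pairing those predicates into chord endpoints.

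Next I would produce an MSO class $\cF$ of $(m+2d)$-colored forests as follows: a colored forest $(F,\chi)$ lies in $\cF$ iff the graph $F\cup B(\chi)$ obtained by reinserting the chords prescribed by $\chi$ is in $\cG$. The edge relation of this reconstructed graph is definable from $(F,\chi)$ by a fixed MSO formula (``$xy$ is an edge iff $xy\in E(F)$ or $x,y$ are paired chord-endpoints''), so relativizing $\varphi$ through this interpretation yields an MSO sentence $\psi$ defining $\cF$. Because the decomposition preserves the underlying vertex set, $|V(G)|=|V(F)|$, and one verifies $\Spec(\cG)=\Spec(\cF)$ in both directions: any $G\in\cG$ on $n$ vertices supplies some $(F,\chi)\in\cF$ on $n$ vertices, and conversely. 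The preceding corollary on MSO forest spectra then gives that $\Spec(\cF)$, and hence $\Spec(\cG)$, is eventually periodic.

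The main obstacle will be the second step---formalizing the MSO interpretation between colored forests and graphs. Concretely one needs (i) that the chord set $B(\chi)$ and the condition ``$F$ is a spanning forest of $F\cup B(\chi)$'' are MSO-definable from $(F,\chi)$, and (ii) that the relativization of $\varphi$ across this interpretation stays within MSO. Both rely crucially on the boundedness of $d$, which keeps the number of auxiliary colors (and hence the amount of monadic data) finite; without bounded defect the encoding would collapse, since one could not name an unbounded chord set with finitely many unary predicates. Once this bounded-defect MSO interpretation is established, the reduction is essentially automatic and the previous corollary closes the argument.
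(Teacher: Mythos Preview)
Your approach---strip off a bounded set of surplus edges, encode their endpoints by fresh unary colors, and reduce to the preceding corollary on MSO classes of colored forests---is exactly the paper's. Two points deserve adjustment.

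First, the paper defines defect (inside the proof) \emph{per component}: a connected graph has defect $d$ if $d{+}1$ is the minimum number of edges whose removal leaves it acyclic, and a general graph has defect $d$ if that is the maximum over its components. Under this reading the \emph{total} number of surplus edges is not bounded (a disjoint union of $k$ triangles has defect $0$ for every $k$), so your global $2d$-color encoding, which presumes $|B|\le d$ for the whole graph, does not cover the statement as the paper intends it. The paper's remedy is to reuse the edge-marking colors across components---color $j$ marks the endpoints of the $j$th removed edge \emph{within each component}---so $d{+}1$ edge colors suffice independently of how many components there are, and the reconstruction ``add an edge between the two vertices of color $j$ in the same tree'' is still MSO. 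Your cyclomatic-number reading yields a correct but strictly weaker result.

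Second, forests in this paper are rooted (that is what the forest corollary is stated for), so one further color is spent to mark a chosen root in each component; you omitted this step. With these two tweaks your argument and the paper's coincide.
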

 \begin{proof}
 A connected graph has defect $d$ if $d+1$ is the minimum number of edges that need to  be 
 removed in order to have an acyclic graph. Thus trees have defect = -1. A graph has defect
 $d$ if the maximum defect of its components is $d$. For graphs of defect at most $d$,
  introduce $d+2$ colors, one to mark a choice of a root in each component, and the others 
  to mark the endpoints of edges which, when removed, convert the graph into a forest. For
  an MSO class of  $m$-colored graphs of defect at at most $d$, carrying out this additional 
  coloring in all possible ways gives an MSO class of $m+d+2$ colored graphs. Then removing
  the marked edges from each graph converts this into a MSO class of 
  $m+d+2$ colored forests with the same spectrum.
 \end{proof}
 
 This can be easily generalized further to MSO classes of digraphs with bounded defect, giving a slight
 generalization of the Gurevich-Shelah result (since trees have defect $-1$, unary functions have defect $0$). 
 These examples suffice to indicate the power
 of knowing that monadic second-order classes of trees have eventually periodic
 spectra. The method of showing that MSO spectra are eventually periodic
 by reducing them to trees has been successfully pursued by Fischer and Makowsky in \cite{FiMa2004} (2004), 
 where they prove that an MSO class that is contained in a 
 class of bounded patch-width has an eventually periodic spectrum.  In the same year Shelah \cite{Shelah2004} 
 proved that MSO classes having a certain recursive constructibility property had eventually periodic spectra, 
 and in 2007  Doron and Shelah \cite{DoSh2007} showed that the bounded patch-width result was a consequence 
 of the constructibility property.
 
\subsection{Effective Tree Procedures}
What follows is a program, given $q$, to effectively find a value for $C_q$ and 
representatives of the $\equiv_q^{\sf MSO}$ classes 
of  \TREES\ and of \FORESTS, with applications to the decidability results of 
Gurevich and Shelah (\cite{GuSh2003}, 2003), 
and an effective procedure to construct Compton's system of equations for trees. 
The particular classes of trees constructed in the WHILE loop of this program 
are similar to the classes $\cT_k^m$ used in 1990 by Compton and Henson to prove lower bounds
on computational complexity (see \cite{CoHe1990}, p.~38).

\renewcommand{\arraystretch}{1.2}
\begin{tabular}{l l}
Program Steps & Comments\\
\hline
{\sf FindReps} := PROC$(q)$&$q$ is the quantifier depth\\
$\cF_0 :=\O$ &  Initialize collection of forests\\
$\cT_{i,0} := \{\bullet_i\},\ 1\le i \le m$ &\begin{tabular}{l}Initialize collection of trees\\
\quad with root color $i$\end{tabular}\\
$\cT_0 := \{\bullet_1,\ldots,\bullet_m\}$&Initialize collection of trees\\
$f(0) := 0$ & cardinality of $\cF_0/\equiv_q^{\sf MSO}$\\
$t_{i}(0) := 1,\ 1\le i \le m$ & cardinality of $\cT_{i,0}/\equiv_q^{\sf MSO}$\\
$t(0) := m$ & cardinality of $\cT_0/\equiv_q^{\sf MSO}$\\
$d(0) := 1$&initialize $d(n)$\\
$n := 0$ & initialize $n$
\end{tabular}

\begin{tabular}{l l}
WHILE $f(n) > f(n-1)$  OR $d(n)>0$ DO\\
$n := n+1$ & augment the value of $n$\\
\quad $\displaystyle\cF_{n} := \Big\{ \sum_{T \in \cT_{n-1}} m_T \star T :  m_T \le n\Big\}$& 
\begin{tabular}{l}make forests using at most $n$ copies \\\quad of each tree in $\cT_{n-1}$\end{tabular}\\
\quad $\displaystyle \cT_{i,n} := \{\bullet_i\} \ \cup\ 
\big\{ \bullet_i \big/ F : F \in \cF_{n}\big\}$&add root of color $i$ to forests in $\cF_{n}$\\
\quad $\cT_{n} := \cT_{1,n}\cup\cdots\cup \cT_{m,n}$&$\cT_{n}$ has all trees created so far\\
\quad $t_i(n) := \big| \cT_{i,n}/\equiv_q^{\sf MSO}\big|$&\# of $\equiv_q^{\sf MSO}$ classes represented by $\cT_{i,n}$\\
\quad $t(n) := \big| \cT_{n}/\equiv_q^{\sf MSO}\big|$&\# of $\equiv_q^{\sf MSO}$ classes represented by $\cT_{n}$\\
\quad $f(n) := \big| \cF_{n}/\equiv_q^{\sf MSO}\big|$&\# of $\equiv_q^{\sf MSO}$ classes represented by $\cF_{n}$\\
\quad $d(n) := \Big| \big\{ T \in \cT_{n} : (n-1)\star T\, \nequiv_q\ n\star T \big\}\Big|$&
\begin{tabular}{l}
\# of failures of 
$(n-1)\star T \equiv_q^{\sf MSO} n\star T$\\
\quad for $T\in \cT_n$
\end{tabular}\\
 END WHILE
 \end{tabular}

\begin{tabular}{l l}
Define $C_q := n-1$\\
Choose a maximal set $REP_{\TREES} :=  \{T_1,\ldots,T_k\}$\\
\quad of $\equiv_q^{\sf MSO}$ distinct trees $T_i$ from $\cT_{C_q}$.\\
Choose a maximal set $REP_{\FORESTS} :=  \{F_1,\ldots,F_\ell\}$\\
\quad  of $\equiv_q^{\sf MSO}$ distinct forests $F_j$ from $\cF_{C_q}$.\\
 RETURN $(N, REP_\TREES,REP_\FORESTS)$\\
 END PROC\\
 \hline
\end{tabular}

\begin{theorem}
The procedure
 {\sf FindReps}$(q)$ halts for all $q\in \bbN$, giving an effective procedure 
 to find  
 a set $REP_\TREES$ 
 of representatives for the $\equiv_q^{\sf MSO}$ equivalence classes of (finite) 
trees, a set $REP_\FORESTS$ of representatives for the $\equiv_q^{\sf MSO}$ equivalence 
 classes of (finite)forests,
 and a number $N$ such  
that for any tree $T$ and $n\ge N$ one has $n\star T \equiv_q^{\sf MSO} N\star T$.
 \end{theorem}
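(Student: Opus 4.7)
The plan has two halves: termination of the WHILE loop, and verifying that the returned objects satisfy the advertised properties. For termination, I would first observe that $\cF_n \subseteq \cF_{n+1}$ (since $\cT_{n-1}\subseteq \cT_n$ and multiplicities are bounded by a larger constant), so the sequence $f(n)$ is non-decreasing in $n$. Because $\equiv_q^{\sf MSO}$ has finitely many equivalence classes on $\FORESTS$, $f(n)$ is bounded, hence eventually constant; pick $n_1$ with $f(n)=f(n-1)$ for every $n\ge n_1$. Next, Lemma \ref{nf tools}\,(b) supplies a uniform constant $C^\ast$ (not depending on $T$) such that $n\star T \equiv_q^{\sf MSO} C^\ast\star T$ whenever $n\ge C^\ast$ and $T$ is any tree; in particular $d(n)=0$ for every $n\ge C^\ast+1$. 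Therefore both loop conditions fail simultaneously at $n_\ast := \max(n_1, C^\ast+1)$, so the procedure halts with $C_q=n_\ast-1$. Decidability of $\equiv_q^{\sf MSO}$ (Lemma \ref{nf tools}\,(c)) makes every quantity computed inside the loop effective.

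For correctness, the central claim is the completeness assertion: every finite tree $T$ is $\equiv_q^{\sf MSO}$-equivalent to some member of $\cT_{C_q}$, and every finite forest is $\equiv_q^{\sf MSO}$-equivalent to some member of $\cF_{C_q}$. I would prove this by induction on the number of nodes of $T$. The base case is immediate because singleton trees $\bullet_i$ lie in $\cT_0\subseteq \cT_{C_q}$. For the inductive step, write $T=\bullet_i/F$ with $F=\sum_j m_j\star S_j$, where the $S_j$ are the pairwise-distinct subtrees appearing below the root. By the induction hypothesis each $S_j$ is $\equiv_q^{\sf MSO}$-equivalent to some $T_j\in \cT_{C_q}$, so Lemma \ref{nf tools}\,(a) gives $F\equiv_q^{\sf MSO} \sum_j m_j\star T_j$. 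The termination condition $d(n_\ast)=0$, together with Lemma \ref{nf tools}\,(a), yields by a one-step induction that $m\star T' \equiv_q^{\sf MSO} (n_\ast-1)\star T'$ for every $m\ge n_\ast-1$ and every $T'\in \cT_{n_\ast}$; hence each multiplicity $m_j$ may be replaced by $\min(m_j,n_\ast-1)\le n_\ast$. The equality $f(n_\ast)=f(n_\ast-1)$ shows that $\cF_{n_\ast}$ represents the same $\equiv_q^{\sf MSO}$-classes as $\cF_{C_q}=\cF_{n_\ast-1}$, and consequently the trees $T_j\in \cT_{C_q}$ can themselves be replaced by $\equiv_q^{\sf MSO}$-equivalents drawn from $\cT_{C_q-1}$. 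Applying Lemma \ref{nf tools}\,(a) once more shows that $F$ is equivalent to a forest in $\cF_{C_q}$ and that $T$ is equivalent to an element of $\cT_{C_q}$, completing the induction.

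Finally I would verify the stabilization statement. Take an arbitrary tree $T$ and $n\ge N:=C_q=n_\ast-1$. Using the completeness claim just established, pick $T'\in \cT_{n_\ast}$ with $T\equiv_q^{\sf MSO} T'$. Lemma \ref{nf tools}\,(a) yields $n\star T\equiv_q^{\sf MSO} n\star T'$ and $N\star T\equiv_q^{\sf MSO} N\star T'$, so it suffices to prove $n\star T'\equiv_q^{\sf MSO} N\star T'$. That follows from $d(n_\ast)=0$ by the one-step induction argument above (adding one copy of $T'$ to both sides of $(n_\ast-1)\star T'\equiv_q^{\sf MSO} n_\ast\star T'$ and iterating). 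Thus $N$ serves as the promised uniform threshold, and choosing $\equiv_q^{\sf MSO}$-maximal subsets $REP_\TREES\subseteq \cT_{C_q}$ and $REP_\FORESTS\subseteq \cF_{C_q}$ yields complete sets of representatives.

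The main obstacle is the completeness induction: it is tempting but incorrect to think that the loop-exit criterion $f(n_\ast)=f(n_\ast-1)$ alone suffices, since $f$ could in principle plateau only to rise later. The fix is to invoke the extra condition $d(n_\ast)=0$ to close off the multiplicity parameter uniformly in $T'$, which is precisely what Lemma \ref{nf tools}\,(b) is designed to deliver; combining the two criteria is exactly what simultaneously locks down both the tree classes and the forest classes.
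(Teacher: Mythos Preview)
Your termination argument is essentially the paper's. For correctness, however, you take a genuinely different route: the paper argues by induction on $n$ that once the loop exits at $n_\ast$ the values $f(n),t(n)$ stay constant and $d(n)=0$ for all $n\ge n_\ast$, and then uses the fact that every tree lies in some $\cT_n$ to conclude completeness. You instead argue directly by structural induction on the size of a tree $T$ that $T$ has a representative in $\cT_{C_q}$. Both work; your approach is more concrete and object-level, while the paper's stabilization induction is shorter once one sees why a single step $f(N)=f(N-1)\wedge d(N)=0\Rightarrow f(N+1)=f(N)\wedge d(N+1)=0$ holds.

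One step in your inductive argument is misstated. From $f(n_\ast)=f(n_\ast-1)$ you get that $\cF_{n_\ast}$ and $\cF_{C_q}$ represent the same classes, hence (by adding a root) that $\cT_{n_\ast}$ and $\cT_{C_q}$ do. You \emph{cannot} conclude from this that $\cT_{C_q}$ and $\cT_{C_q-1}$ represent the same classes; that would require $f(C_q)=f(C_q-1)$, which is not given. Fortunately you do not need that step: once you have capped multiplicities at $n_\ast-1$ using $d(n_\ast)=0$, your forest $\sum_j m_j'\star T_j$ (with $T_j\in\cT_{C_q}=\cT_{n_\ast-1}$ and $m_j'\le n_\ast$, after re-capping if distinct $T_j$'s coincide) already lies in $\cF_{n_\ast}$; then $f(n_\ast)=f(C_q)$ puts it in $\cF_{C_q}$, and $T=\bullet_i/F$ lands in $\cT_{C_q}$ directly. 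Delete the $\cT_{C_q-1}$ sentence and the induction closes cleanly.
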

 
 \begin{proof}
 The classes $\cF_n$ and $\cT_n$ are non-decreasing,  every (finite) forest is in some $\cF_n$, and
 every (finite) tree is in some $\cT_n$. 
 
 Based on comments in the introduction, 
let $P_q$ be the (finite) number of $\equiv_q^{\sf MSO}$ classes of finite forests, and let $C_q$ be such that 
$n\star T \equiv_q^{\sf MSO} C_q\star T$, for any tree $T$.
 
Since $f(n)$ is non-decreasing and $\le P_q$, there is an $R_q$ such that for $n\ge R_q$ one has $f(n) = f(R_q)$.

For $n> C_q$ one has $d(n) = 0$.
 
 So for $n > \max(C_q,R_q)$, the WHILE condition must fail to hold. Thus the looping process  in the procedure
 {\sf FindReps} halts for some $n\le \max(C_q,R_q)$.

 Since the number $N$ returned by the procedure is such that $f(N) = f(N-1)$ and $d(N) = 0$,
 every forest in $\cF_{N+1}$ is $\equiv_q^{\sf MSO}$
 to one in $\cF_N$, so $f(N+1) = f(N)$. Then
 $t(N+1) = t(N)$ and $d(N+1)=0$. 
 
By induction one has 
 $$
\Big(f(N) = f(N-1)\ \wedge\ d(N) = 0\Big)\ \Rightarrow \ (\forall n\ge N) \Big(f(n) = f(N)\ \wedge\ t(n) = t(N)\ \wedge\ d(N)=0\Big).
$$
 
 Consequently $\cF_N$ has representatives for all $\equiv_q^{\sf MSO}$ equivalence classes of forests, 
 and $\cT_N$ has representatives for all $\equiv_q^{\sf MSO}$ equivalence classes of trees, and 
 $N$ has the desired property of functioning as a value for $C_q$.
 
The procedures for constructing the classes $\cF_n$, $\cT_{i,n}$, and $\cT_n$ are effective, as
are the calculations of the functions $f(n)$, $t_i(n)$, $t(n)$ and $d(n)$. 
\end{proof}

%
Further Conclusions:
\begin{thlist}

\item
The trees in $\cT_n$ are all of height $\le n$,  $t_1(n) = \cdots = t_m(n)$, 
and $t(n)  = t_1(n) + \cdots + t_m(n) = m\cdot t_1(n)$.

\item
One can effectively find MSO sentences $\varphi_i$, $1\le i \le k$, such that $\varphi_i$ defines 
$ [T_i]_q$, the  $\equiv_q^{\sf MSO}$ equivalence class of trees of with the representative $T_i$ in it.

(Just start enumerating the sentences $\varphi$ and
 test each one in turn to see if  $(\exists i)(\forall j) \big(T_j\models \varphi\ \Leftrightarrow i = j\big)$. 
 If so then $i$ is unique; if no sentence
 had been previously found that defined $[T_i]_q$, then let $\varphi_i := \varphi$.)
 
 \item
 Likewise for $1 \le j \le \ell$ one can effectively find $\psi_j$ defining $[F_j]_q$, the  $\equiv_q^{\sf MSO}$ equivalence class of 
 forests with $F_j$ in it.
 
\item (Gurevich and Shelah, \cite{GuSh2003} 2003)
The MSO theory of FORESTS  is decidable. (Given $\psi$, it will be true of all forests
 iff it is true of each $F_j$ in $REP_\FORESTS$.)
 
\item (Gurevich and Shelah, \cite{GuSh2003} 2003)
Finite satisfiability for the MSO theory of one $m$-colored (finite) unary function is decidable. 
(This can be proved directly, by interpretation into FORESTS.)

\item
One can effectively find the Compton Equations $\Sigma_q$ for the  $\equiv_q^{\sf MSO}$ equivalence classes 
of $m$-colored trees,
namely one has
\begin{eqnarray*}
{[}T_i ]_q &=&
\{ \bullet_j\}\quad \text{if } T_i = \{\bullet_j\}; \ \text {otherwise}\\
{[}T_i ]_q &=&\bigcup \Big\{ \bullet_r\Big/\sum_{j=1}^k \gamma_j\star [T_j ]_q\ : \ \gamma_j \in \{1,\ldots, N-1, (\ge N)\}, \ 
T_i \equiv_q^{\sf MSO} \bullet_r\Big/ \sum_{j=1}^k \gamma_j\star T_j \Big\}.
\end{eqnarray*}
To test the last condition (concerning $\equiv_q^{\sf MSO}$) one replaces any $\gamma_i =(\ge N)$ by $N$, 
so one is deciding $\equiv_q^{\sf MSO}$ between two trees.

\item
One can effectively find the dependency diagraph of $\Sigma_q$ (immediate from the previous step).

\item
One can effectively find the periodicity parameters (as defined in \cite{BBY2006}) of the spectra of the $[T_i]_q$.
\end{thlist}

%
\begin{question} 
One questions stands out, namely can one find an explicit bound (in terms of known
functions, like exponentiation) for the value $n = N+1$ for which the WHILE loop halts? This would give an upper 
bound on the height of a set of smallest possible representatives of the $\equiv_q^{\sf MSO}$ classes of trees.
\end{question}

 In conclusion, 
a strong point in favor of Compton's approach, besides its simplicity in proving the 
 foundational result on the spectra of MSO classes of trees, is that it also gives
a defining system for the generating functions, and hence offers the possibility of
understanding the periodicity parameters described in Definition \ref{defn per param}
and the asymptotics for the growth of MSO classes. Using Compton's equations
we have carried out a detailed study \cite{BBY2010} of MSO classes $\cT$ of trees whose
generating function $T(x)$  has radius of convergence
$\rho=1$. One conclusion obtained was that if the class of forests $\partial \cT$ is closed
under addition, and under extraction of trees (thus forming an additive number system as
described in \cite{Burris2001}), then $\cT$ has a MSO 0--1 law.
 
 %

 \end{document}